\newtheorem*{maintheorem}{Theorem}
\newtheorem{theorem}{Theorem}[section]
\newtheorem{lemma}[theorem]{Lemma}
\newtheorem{proposition}[theorem]{Proposition}
\newtheorem{corollary}[theorem]{Corollary}
\theoremstyle{definition}
\newtheorem{definition}[theorem]{Definition}
\newtheorem{assumption}[theorem]{Assumption}
\theoremstyle{remark}
\newtheorem{remark}[theorem]{Remark}
\numberwithin{equation}{section}
\DeclareMathAlphabet{\mathbbm}{U}{bbm}{m}{n}
\newcommand{\uno}{\mathbbm{1}}
\DeclareMathOperator{\Div}{div}
\newcommand{\e}{\operatorname{e}}
\newcommand{\im}{\mathrm{i}}
\newcommand{\N}{\mathbf{N}}
\newcommand{\Z}{\mathbf{Z}}
\newcommand{\R}{\mathbf{R}}
\newcommand{\loc}{\textrm{loc}}
\newcommand{\scal}[2][{}]{\langle #2 \rangle_{#1}}
\newcommand{\Pb}{\mathbb{P}}
\newcommand{\E}{\mathbb{E}}
\newcommand{\Ons}{\Omega_\text{NS}}
\newcommand{\field}[1][F]{\mathscr{#1}}
\newcommand{\Torus}{\mathbb{T}_3}
\newcommand{\Energy}{\mathcal{E}}
\newcommand{\Test}{\mathcal{D}^\infty}
\newcommand{\semi}{\mathcal{P}}
\newcommand{\cov}{\mathcal{Q}}
\newcommand{\de}[1][t]{\,\mathrm{d}#1}
\newcommand{\term}[1]{\text{\textcircled{\sf\scriptsize #1}}}
\newcommand{\property}[1]{\textsf{\footnotesize\bfseries[#1]}}
\newcommand{\nuno}{\property{n1}}
\newcommand{\ndue}{\property{n2}}
\newcommand{\besp}[1]{\text{\tiny$(#1)$}}
\newcommand{\tu}{\widetilde{u}}
\newcommand{\vettore}[1]{{\mathbf{#1}}}
\newcommand{\vk}{\vettore{k}}
\newcommand{\vl}{\vettore{l}}
\newcommand{\vm}{\vettore{m}}
\begin{document}
  \title[critical strong Feller for Markov solutions to Navier--Stokes]{Critical strong Feller regularity for Markov solutions to the Navier--Stokes equations}
  \author[M. Romito]{Marco Romito}
    \address{Dipartimento di Matematica, Universit\`a di Firenze\\ Viale Morgagni 67/a\\ I-50134 Firenze, Italia}
    \email{romito@math.unifi.it}
    \urladdr{\url{http://www.math.unifi.it/users/romito}}
  \subjclass[2000]{Primary 76D05; Secondary 60H15, 35Q30, 60H30, 76M35}
  \keywords{stochastic Navier-Stokes equations, martingale problem, Markov
    selections, strong Feller property, continuous dependence, ergodicity.}
  \begin{abstract}
    The main purpose of this paper is to show that Markov solutions to the 3D
    Navier--Stokes equations driven by Gaussian noise have the strong Feller
    property up to the critical topology given by the domain of the Stokes
    operator to the power one-fourth.
  \end{abstract}
\maketitle
\section{Introduction}

It is not known whether the martingale problem for the Navier--Stokes equations
driven by Gaussian noise is well--posed~\cite{DapDeb08,Rom08a}. In order to
analyse the problem Da~Prato and Debussche~\cite{DapDeb03} (see also
\cite{DebOda06,Oda07}) showed the existence of Markov processes solutions to
the equations and some regularity properties of the transitions semigroups.

A different approach to the existence and regularity of Markov solutions
has been introduced in~\cite{FlaRom06,FlaRom08} (see also
\cite{FlaRom07,Rom08,Rom08a,Rom08b,BloFlaRom09,RomXu09,GolRocZha09}), based on
an abstract selection principle for Markov families (see Theorem~\ref{t:markov})
and the short time coupling with a smooth process. A refined analysis of this
coupling is one of the purposes of this paper (see Sections~\ref{s:sf}
and~\ref{ss:approx}).

Here we consider the Navier--Stokes equations on the three dimensional
torus $\Torus$ with periodic boundary conditions,
\begin{equation}\label{e:nse}
  \begin{cases}
     \dot u + (u\cdot\nabla)u + \nabla p = \nu\Delta u + \dot\eta,\\
     \Div u = 0.
  \end{cases}
\end{equation}
driven by a Gaussian noise. For simplicity we can represent the noise as
\[
\dot\eta = \sum_{\vk\in\Z^3}\sigma_\vk\de[\beta]_\vk(t)\e^{\im\vk\cdot x},
\]
where $(\beta_\vk)_{\vk\in\Z^3}$ are (suitably) independent Brownian motions
(precise definitions and assumptions will be given in the next section).
The analysis originated in \cite{FlaRom08} used in a crucial way two main
assumptions on the driving noise, namely regularity and non-degeneracy.
The property of non-degeneracy can be translated, roughly speaking, in terms
of the coefficients $(\sigma_\vk)_{\vk\in\Z}$ simply as $\sigma_\vk>0$.
The possibility to relax this condition is analysed in Romito and
Xu~\cite{RomXu09}.

The main purpose of this paper is to complete the analysis developed in
\cite{FlaRom06,FlaRom07,FlaRom08,Rom08,Rom08a,Rom08b} and relax the regularity
assumption, namely to allow coefficients whose decay as $|\vk|\to\infty$ is of order
$|\sigma_\vk|\approx |\vk|^{-3/2-2\alpha_0}$ for $\alpha_0>0$. In~\cite{FlaRom08}
the restriction is $\alpha_0>\tfrac16$, so the improvement seems tiny. On the
other hand the following result achieved here is, in a way, the best possible.
\begin{maintheorem}
Assume non-degeneracy (as explained above) and let $\alpha_0>0$. Then every
Markov solution to the Navier-Stokes equations is strong Feller in the topology
of $D(A^\alpha)$ for every $\alpha>\tfrac12$, where $A$ is the Stokes operator.
\end{maintheorem}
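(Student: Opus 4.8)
The plan is to follow the coupling strategy of \cite{FlaRom08}. Fix a Markov solution $(\Pb_x)$ produced by the selection principle of Theorem~\ref{t:markov}, and show that for every bounded measurable $\phi$ the map $x\mapsto P_t\phi(x)=\E_x[\phi(u_t)]$ is continuous in the $D(A^\alpha)$ topology. By the Markov property it is enough to prove this for arbitrarily small $t$, with a quantitative modulus, since the strong Feller property then propagates to all times. Concretely, I would aim at a bound $|P_t\phi(x)-P_t\phi(y)|\le\omega_t(\|x-y\|_{D(A^\alpha)})\,\|\phi\|_\infty$, uniform on bounded sets of initial data, with $\omega_t(r)\to0$ as $r\to0$.

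The heart of the argument is an auxiliary dynamics in which the quadratic term is tamed by a cutoff on a suitable higher norm, so that the drift becomes globally Lipschitz on the relevant energy sublevels; it is analysed through the splitting $u=z+v$ into the Ornstein--Uhlenbeck process $z$ driven by the noise and a remainder $v$ with better regularity. For this regularised system the non-degeneracy of the covariance $\cov$ makes every direction reachable, and a Bismut--Elworthy--Li formula gives a gradient bound of the form
\[
  |D(\widetilde P_t\phi)(x)\cdot h|
  \le \frac{C}{t}\,\|\phi\|_\infty\,
  \E\Bigl[\Bigl(\int_0^t|\cov^{-1/2}\Theta_s h|^2\de[s]\Bigr)^{1/2}\Bigr],
\]
where $\Theta_s$ is the first--variation (Jacobian) flow of the regularised equation. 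The coupling step transfers this to the genuine Markov solution: using the short--time regularity of $u$ provided by the energy inequality, one shows that on an event whose probability is close to $1$, controlled in terms of the initial $D(A^\alpha)$ norm and of $t$, the true solution coincides with the cutoff one, so that $|P_t\phi(x)-P_t\phi(y)|$ is dominated by the auxiliary gradient bound integrated along the segment joining $x$ and $y$, plus an error that vanishes as $t\to0$.

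The decisive and hardest point is the gradient estimate in the critical topology, namely the control of $\int_0^t|\cov^{-1/2}\Theta_s h|^2\de[s]$ for a direction $h\in D(A^\alpha)$ with $\alpha$ as small as possible. Since the eigenvalues of $\cov$ are of order $|\vk|^{-3-4\alpha_0}$, the operator $\cov^{-1/2}$ costs a factor $|\vk|^{3/2+2\alpha_0}$ in each Fourier mode. For the purely linear part of $\Theta_s$, i.e.\ replacing $\Theta_s$ by $\e^{-sA}$, a direct Fourier computation shows that the $|\vk|^{-2}$ gained from the time integral against the heat kernel makes $\int_0^t|\cov^{-1/2}\e^{-sA}h|^2\de[s]$ finite as soon as $\alpha>\tfrac14+\alpha_0$, which tends to the critical space $D(A^{1/4})$ as $\alpha_0\downarrow0$ and explains the threshold announced in the abstract. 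The genuine obstacle is the nonlinear contribution to $\Theta_s$: the derivative of the (cutoff) bilinear term must be estimated so that its loss of derivatives stays strictly below the smoothing of $\e^{-sA}$, uniformly as $\alpha_0\downarrow0$, and it is this term that forces the more conservative range $\alpha>\tfrac12$. I expect the refined treatment of the coupling announced in Sections~\ref{s:sf} and~\ref{ss:approx} to be precisely what is needed here.
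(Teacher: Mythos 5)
Your framework---cutoff equation, Ornstein--Uhlenbeck splitting, weak--strong uniqueness up to a blow-up time, tail estimates for that time, and a Bismut--Elworthy--Li gradient bound for the cutoff semigroup---is exactly the first stage of the paper's argument (Theorem~\ref{t:sf}, following \cite{FlaRom08}), and it does prove the strong Feller property, but only in supercritical topologies: in the paper's indexing $V_\alpha=D(A^{\alpha/2})$, it requires $\max\{1+\alpha_0,\tfrac12+2\alpha_0\}\leq\alpha<1+2\alpha_0$. The genuine gap is your final step. You propose to reach the critical topology by estimating $\int_0^t\|\cov^{-\frac12}\Theta_s h\|_H^2\de[s]$ directly for $h$ in a space as close as possible to $V_{1/2}=D(A^{1/4})$, hoping that the nonlinear contribution to the Jacobian loses fewer derivatives than the heat semigroup gains. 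This is precisely what fails, and no refinement of the coupling repairs it: since $\cov^{-1/2}$ costs $A^{\frac34+\alpha_0}$ by \ndue, the BEL bound forces you to control $D_h\xi_s$ in $V_{\frac32+2\alpha_0}$, and closing the energy estimate for the linearised equation~\eqref{e:flowderivative} in $V_\gamma$ with $\gamma=\tfrac12+2\alpha_0$ requires the bilinear estimate of Lemma~\ref{l:Bnostro} with $a=b=\alpha$, $c=-\gamma$, whose condition $2(a+b+c)\geq3$ forces $\alpha\geq1+\alpha_0$. As $\alpha_0\downarrow0$ this threshold tends to $1$, not to the critical value $\tfrac12$; your computation for the purely linear part is correct but does not see this obstruction, and the ``conservative range'' you expect to salvage is strictly smaller than what the theorem claims.

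The idea your proposal is missing is Proposition~\ref{p:critonestep}, which abandons the gradient bound altogether in the critical range and replaces it by a downward bootstrap. One first shows (Lemma~\ref{l:parabRreg}) that the cutoff flow regularises: if $x_n\to x$ in $V_\alpha$ then $u^{\besp{\alpha,R}}_{x_n}(t)\to u^{\besp{\alpha,R}}_x(t)$ almost surely in the \emph{stronger} norm $V_\beta$, for suitable $\beta>\alpha$. Then, by the Markov property, weak--strong uniqueness (Theorem~\ref{t:weakstrong}) and the blow-up estimate (Proposition~\ref{p:blowup}), one writes
\[
  \semi_t\varphi(y)
    = \semi_\epsilon^{\besp{\alpha,R}}(\semi_{t-\epsilon}\varphi)(y) + o_{\epsilon,R}(y),
\]
with $o_{\epsilon,R}$ small uniformly in $y$; since $\semi_{t-\epsilon}\varphi\in C_b(V_\beta)$ by the already established $V_\beta$-strong Feller property, dominated convergence along the a.s.\ convergent cutoff flow gives $V_\alpha$-continuity of $\semi_t\varphi$. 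Iterating this step (each step descends by at most $\tfrac12\wedge(\alpha-\tfrac12)$) carries the property from the supercritical range of Theorem~\ref{t:sf} down to every $\alpha>\tfrac12$. Note the price paid: in the critical range one obtains continuity only, not the log-Lipschitz modulus~\eqref{e:lip} that your proposed bound $|P_t\phi(x)-P_t\phi(y)|\leq\omega_t(\|x-y\|)\|\phi\|_\infty$ promises; such a quantitative modulus is exactly what the BEL obstruction above prevents, and it is not delivered by the paper either.
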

This optimality has a twofold reason. On one hand, the value of the main parameter
$\alpha_0\leq0$ would correspond to \emph{non-trace class} covariance and the
analysis of the Navier-Stokes equations in this case is open. On the other hand
the main theorem above states that under this assumption \emph{every} solution
has good regularity properties \emph{as long as} the underlying equation admits
local smooth solutions. In fact, the value $\tfrac12$ is the critical threshold
for existence and uniqueness of smooth solution in the deterministic case,
as proved by Fujita and Kato~\cite{FujKat64}. An explanation of the critical
value, of the connection with the scaling properties of the equation and in
general of the scaling heuristic for the Navier--Stokes equations can be found
for instance in Cannone~\cite{Can04}.

In conclusion in this paper we verify that every Markov diffusion generated
by the Navier--Stokes equations has good properties of regularity as long as
it lives in the largest possible space (at least in the hierarchy of hilbertian
Sobolev spaces) dictated by the deterministic analysis.

The paper is organised as follows. Section~\ref{s:generic} contains notations
and a short summary of those definitions and result useful for this work. The
strong Feller property in strong topologies is proved in Section~\ref{s:sf}. The
main theorem (recast as Theorem~\ref{t:main}) is proved in Section~\ref{s:critical}
and some additional properties of the Markov solutions which follow from it
are given in Section~\ref{ss:consequence}. Finally in Section~\ref{s:technical}
we prove some technical results: the construction of the short time coupling
with a smooth solutions and an inequality for the Navier--Stokes nonlinearity.
\section{Generalities and past results}\label{s:generic}

Let $\Torus=[0,2\pi]^3$ and let $\Test$ be the space of infinitely
differentiable divergence free periodic vector fields with mean zero on $\Torus$.
Let $H$ be the closure of $\Test$ in $L^2(\Torus,\R^3)$ and $V$ be
the closure in $H^1(\Torus,\R^3)$. Denote by $A$, with domain $D(A)$, the
\emph{Stokes} operator and for every $\alpha\in\R$ set $V_\alpha=D(A^{\alpha/2})$,
with norm $\|u\|_\alpha = \|A^{\alpha/2}u\|_H$ for $u\in V_\alpha$. In particular
we have $V_0=H$, $V_1=V$ and $V_{-1}=V'$. Define the bi-linear operator
$B:V\times V\to V'$ as the projection onto $H$ of the nonlinearity $(u\cdot\nabla)u$
of equation~\eqref{e:nse}. We refer to Temam \cite{Tem95} for a detailed account
of all the definitions.

We recast problem~\eqref{e:nse} in the following abstract form,
\begin{equation}\label{e:absnse}
  du + (\nu Au + B(u,u))\de = \cov^{\frac12}\de[W],
\end{equation}
where $W$ is a cylindrical Wiener process on $H$ and $\cov$ is a linear
bounded symmetric positive operator on $H$ with finite trace.

The probabilistic framework for problem~\eqref{e:absnse} is given as follows.
Set $\Ons = C([0,\infty);D(A)')$, let $\field[B]$ be the Borel $\sigma$-field
on $\Ons$ and let $\xi:\Ons\to D(A)'$ be the canonical process on $\Ons$ (that
is, $\xi_t(\omega)=\omega(t)$). Define the filtration
$\field[B]_t=\sigma(\xi_s:0\leq s\leq t)$.

We give the definition of solutions following the approach presented
in~\cite{Rom08b}, which we briefly recall. For every $\varphi\in\Test$ consider
the process $(M_t^\varphi)_{t\geq0}$ on $\Ons$ defined for $t\geq0$ as
\[
  M_t^\varphi
   = \scal[H]{\xi_t-\xi_0,\varphi}
     + \nu\int_0^t\scal[H]{\xi_s,A\varphi}\de[s]
     - \int_0^t\scal[H]{B(\xi_s,\varphi),\xi_s}\de[s].
\]
\begin{definition}
  Given $\mu\in\Pr(H)$, a probability $\Pb_\mu$ on $(\Ons,\field[B])$ with
  marginal $\mu$ at time $t=0$ is a \emph{weak martingale solution} starting at
  $\mu$ to problem~\eqref{e:absnse} if
  \begin{itemize}
    \item $\Pb_\mu[L^2_\loc([0,\infty);H)]=1$,
    \item for each $\varphi\in\Test$ the process $(M_t^\varphi,\field[B]_t,\Pb_\mu)$
      is a square integrable continuous martingale with quadratic variation
      $[M^\varphi]_t=t\|\cov^{\frac12}\varphi\|^2_H$.
  \end{itemize}
\end{definition}
Let $(\sigma_k^2)_{k\in\N}$ be the system of eigenvectors of the covariance $\cov$
and let $(e_k)_{k\in\N}$ be a corresponding complete orthonormal system of
eigenfunctions. Define for every $k\in\N$ the process $\beta_k(t)=\sigma_k^{-1}M_t^{e_k}$.
Under a weak martingale solution $\Pb$, $(\beta_k)_{k\in\N}$ is a sequence of
independent one dimensional Brownian motions, thus the process
\begin{equation}\label{e:wiener}
  W(t) = \sum_{k=0}^\infty\sigma_k\beta_k(t)e_k
\end{equation}
is a $\cov$-Wiener process and $z(t) = W(t) - \nu\int_0^t A\e^{-\nu A(t-s)}W(s)\de[s]$
is the associated Ornstein-Uhlenbeck process starting at $0$, that is the solution
to
\begin{equation}\label{e:stokes}
  dz + \nu Az\de = \cov^{\frac12}\de[W],
  \qquad z(0)=0.
\end{equation}
Define the process $v(t,\cdot) = \xi_t(\cdot) - z(t,\cdot)$. Since
$M_t^\varphi=\langle W(t),\varphi\rangle$ for every test function $\varphi$,
it follows that $v$ is a weak solution of the equation
\begin{equation}\label{e:Veq}
  \partial_t v + \nu A v + B(v+z,v+z) = 0, \qquad\Pb-\text{a.~s.},
\end{equation}
with initial condition $v(0)=\xi_0$. The energy balance functional associated
to $v$ is given as
\[
  \Energy_t(v,z)
    = \frac12\|v_t\|_H^2 + \nu\int_0^t\|v_r\|_V^2\de[r] - \int_0^t\scal{z_r,B(v_r+z_r,v_r)}\de[r].
\]
\begin{definition}\label{d:ems}
  Given $\mu\in\Pr(H)$, a weak martingale solution $\Pb_\mu$ starting at $\mu$
  is a \emph{energy martingale solution} if
  \begin{itemize}
    \item $\Pb_\mu[v\in L_\loc^\infty([0,\infty);H)\cap L^2_\loc([0,\infty);V)]=1$,
    \item there is a set $T_{\Pb_\mu}\subset (0,\infty)$ of null Lebesgue measure
      such that for all $s\not\in T_{\Pb_\mu}$ and all $t\geq s$,
      $\Pb_\mu[\Energy_t(v,z)\leq\Energy_s(v,z)] = 1$.
  \end{itemize}
\end{definition}
The following theorem ensures existence of a Markov family of solutions to
problem~\eqref{e:nse}.
\begin{theorem}[\cite{Rom08b}]\label{t:markov}
There exists a family $(\Pb_x)_{x\in H}$ of energy martingale solutions
such that $\Pb_x[\xi_0=x]=1$ for every $x\in H$ and for almost every $s\geq0$
(including $s=0$), for all $t\geq s$ and all bounded measurable $\phi:H\to\R$,
\[
  \E^{\Pb_x}[\phi(\xi_t')|\field[B]_s] = \E^{\Pb_{\xi_s}}[\phi(\xi_{t-s}')].
\]
\end{theorem}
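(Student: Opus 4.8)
The proof rests on the abstract Markov selection principle for martingale problems in the style of Stroock--Varadhan and Krylov, adapted to the energy martingale framework. The plan is first to show that the set-valued map $x\mapsto\mathcal{C}(x)$, where $\mathcal{C}(x)$ denotes the collection of energy martingale solutions starting at $x$ in the sense of Definition~\ref{d:ems}, enjoys the three structural properties of nonemptiness, compactness and measurability; then to verify the two \emph{pre-Markov} properties of disintegration and reconstruction; and finally to pin down a measurable Markov selection by successively optimising a countable family of resolvent-type functionals.

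For the structural properties I would proceed as follows. For fixed $x\in H$ a solution is built by Galerkin truncation of~\eqref{e:absnse}: the finite-dimensional diffusions satisfy the usual energy estimates, and passing to the limit by tightness and a compactness argument, using the chain of embeddings $V\hookrightarrow H\hookrightarrow D(A)'$, yields a weak martingale solution. The energy inequality survives the limit, with its exceptional null set $T_{\Pb}$ arising from the times at which lower semicontinuity of $\Energy$ may fail; this gives $\mathcal{C}(x)\neq\emptyset$. The same a priori bounds furnish tightness of $\mathcal{C}(x)$, uniform as $x$ ranges over bounded sets, and the defining conditions — the martingale property of $M^\varphi$ with its prescribed quadratic variation, together with the energy bound — are stable under weak convergence, so each $\mathcal{C}(x)$ is compact in $\Pr(\Ons)$. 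A measurable-selection argument then yields measurability of $x\mapsto\mathcal{C}(x)$.

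Next I would establish the two pre-Markov properties. For \emph{disintegration}, given $\Pb\in\mathcal{C}(x)$ one forms, for a.e.\ $s\geq0$, the regular conditional probability $\Pb(\,\cdot\mid\field[B]_s)(\omega)$; shifting time by $s$ and using the martingale characterisation one checks that this conditional law lies in $\mathcal{C}(\xi_s(\omega))$ for $\Pb$-a.e.\ $\omega$, the point being to verify that the energy balance restarts correctly with the Ornstein--Uhlenbeck component reinitialised at $s$. For \emph{reconstruction}, given $\Pb\in\mathcal{C}(x)$ and a measurable selection $\omega\mapsto Q_{\xi_s(\omega)}\in\mathcal{C}(\xi_s(\omega))$, one glues $\Pb$ on $[0,s]$ with the kernels $Q$ after time $s$ to produce a new element of $\mathcal{C}(x)$, the energy inequality for the glued measure following from those of the two pieces for times avoiding $T_{\Pb}$ together with the union of the exceptional sets of the kernels.

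With these tools the selection is carried out by the abstract theorem. Fixing a countable dense family $(f_n,\lambda_n)$ with $f_n\in C_b(H)$ and $\lambda_n>0$, one considers the functionals
\[
  \Pb\longmapsto \E^{\Pb}\Bigl[\int_0^\infty \e^{-\lambda_n t}\,f_n(\xi_t)\de\Bigr],
\]
and at each stage retains the subset of $\mathcal{C}(x)$ on which the current functional is maximised — a nonempty compact subset by the structural properties — iterating over $n$ so as to isolate for each $x$ a single solution $\Pb_x$. The disintegration and reconstruction properties then guarantee that the family is measurable in $x$ and satisfies the almost-sure Markov identity of the statement. The main obstacle is the bookkeeping of the energy inequality under conditioning and gluing: because $\Energy_t(v,z)$ couples $v$ with the driving Ornstein--Uhlenbeck process $z$ and holds only for times outside a measure-zero set $T_{\Pb}$, one must control these exceptional sets along the disintegration and reconstruction operations, so that the Markov property persists for almost every $s$ and, as claimed, at $s=0$.
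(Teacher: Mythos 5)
Your proposal is essentially the proof of the cited source: this theorem is not proved in the present paper (it is imported from \cite{Rom08b}, building on \cite{FlaRom08}), and the argument there is exactly the Krylov--Stroock--Varadhan selection scheme you describe --- nonemptiness, compactness and measurability of the solution-set map, the disintegration and reconstruction (pre-Markov) properties for energy martingale solutions, and iterated maximisation of countably many resolvent functionals $\E^{\Pb}\bigl[\int_0^\infty \e^{-\lambda_n t} f_n(\xi_t)\de\bigr]$. Your closing remark about tracking the exceptional sets $T_{\Pb}$ under conditioning and gluing is precisely the delicate point that the almost-sure energy inequality of \cite{Rom08b} is designed to handle, and it is what produces the ``for almost every $s\geq 0$ (including $s=0$)'' form of the Markov property in the statement.
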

In the rest of the paper, we shall consider the following assumption on the
covariance operator.
\begin{assumption}\label{a:mainass}
The covariance operator $\cov$ of the driving noise satisfies
  \begin{itemize}
    \item[\nuno] there is $\alpha_0>0$ such that $A^{\frac34+\alpha_0}\cov^\frac12$
      is a linear bounded operator on $H$,
    \item[\ndue] $A^{\frac34+\alpha_0}\cov^\frac12$ is a linear bounded invertible
      operator on $H$, with bounded inverse.
  \end{itemize}
\end{assumption}
We shall emphasize when we need the stronger property {\ndue} or, vice versa,
when the weaker property {\nuno} is sufficient for our purposes.
\section{The strong Feller property}\label{s:sf}

In this section we extend \cite[Theorem~5.11]{FlaRom08} and \cite[Theorem~3.1]{FlaRom07}
to all the admissible values of $\alpha$ and $\alpha_0$ where a short time
coupling with smooth solutions is possible (see Theorem~\ref{t:Rexistuniq}).
\begin{definition}
A semigroup $(\semi_t)_{t\geq0}$ is $V_\alpha$--\emph{strong Feller}
at time $t>0$ if $\semi_t\varphi\in C_b(V_\alpha)$ for every
$\varphi:H\to\R$ bounded measurable.
\end{definition}
\begin{theorem}\label{t:sf}
Under Assumption~\ref{a:mainass}, let $\alpha>\tfrac12$ be such that
\[
\max\{1 + \alpha_0, \frac12 + 2\alpha_0\} \leq \alpha < 1 + 2\alpha_0
\]
(with $\alpha>\max\{1 + \alpha_0, \tfrac12 + 2\alpha_0\}$ if $\alpha_0=\tfrac12$).
Then the transition semigroup $(\semi_t)_{t\geq0}$ associated to any Markov
solution $(\Pb_x)_{x\in H}$ is $V_\alpha$--strong Feller for every $t>0$.
Moreover, there are $c>0$ and $\gamma\geq2$ (whose value is given in the proof)
such that for all $\phi\in B_b(H)$, $x\in V_\alpha$ and $h\in V_\alpha$ with
$\|h\|_\alpha\leq 1$,
\begin{equation}\label{e:lip}
  |\semi_t\phi(x+h) - \semi_t\phi(x)|
    \leq \frac{c}{t\wedge 1}(1+\|x\|_\alpha^\gamma)\|h\|_\alpha\log\bigl(\e\|h\|_\alpha^{-1}\bigr).
\end{equation}
\end{theorem}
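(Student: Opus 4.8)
The plan is to derive \eqref{e:lip} by comparing the Markov semigroup $(\semi_t)$ with the transition semigroup of a regularized, globally well-posed dynamics and transferring a continuity estimate across the short-time coupling of Theorem~\ref{t:Rexistuniq}. The admissible window for $\alpha$ is already dictated by two elementary facts. The upper bound $\alpha<1+2\alpha_0$ is the regularity threshold of the Ornstein--Uhlenbeck process $z$ of \eqref{e:stokes}: under \nuno{} its stationary covariance is comparable to $A^{-1}\cov$, so $\E\|z\|_\alpha^2\approx\operatorname{Tr}(A^{\alpha-1}\cov)$, which is finite precisely when $\alpha<1+2\alpha_0$; since $\xi=v+z$, one cannot hope for $V_\alpha$ regularity beyond that of $z$. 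The lower bound $\alpha>\tfrac12$ is the Fujita--Kato threshold below which \eqref{e:Veq} need not admit local smooth solutions. Working with $v=\xi-z$ throughout, I would fix a cut-off radius $R$, let $(\semi_t^R)$ be the semigroup of the dynamics obtained by truncating $B$ once $\|v\|_\alpha$ exceeds $R$, and let $\tau_R$ be the exit time of $\{\|\cdot\|_\alpha\le R\}$. By Theorem~\ref{t:Rexistuniq} this dynamics is smooth and, for $x\in V_\alpha$, agrees with the Markov solution up to $\tau_R$, so that
\[
  |\semi_t\phi(x)-\semi_t^R\phi(x)|\le 2\|\phi\|_\infty\,\Pb_x[\tau_R\le t].
\]

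Second, I would establish a continuity estimate for $\semi_t^R$ by steering the solution started at $x+h$ onto the one started at $x$ via a Girsanov shift of the noise, so that the laws of the two endpoints are close in total variation. Choosing the difference $w_s=u^{x+h}_s-u^x_s$ to interpolate from $h$ to $0$, e.g. $w_s=(1-s/t)\e^{-\nu As}h$, the diffusive part cancels in $\dot w+\nu A w$, leaving a linear control $\approx-\tfrac1t\e^{-\nu As}h$. Since \ndue{} gives $\cov^{-1/2}\approx A^{3/4+\alpha_0}$, the Cameron--Martin cost of the linear part is $\tfrac1{t^2}\int_0^t\|A^{3/4+\alpha_0}\e^{-\nu As}h\|_H^2\de[s]$, and the smoothing of the semigroup bounds this by $\tfrac{\|h\|_\alpha^2}{t^2}\int_0^t s^{-(3/2+2\alpha_0-\alpha)}\de[s]$, whose integral converges exactly when $\alpha>\tfrac12+2\alpha_0$ and produces the $(t\wedge1)^{-1}$ prefactor of \eqref{e:lip}. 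The nonlinear correction to the control, namely $\cov^{-1/2}\bigl(B(w,u^{x+h})+B(u^x,w)\bigr)$, is treated on $\{s<\tau_R\}$, where $\|v\|_\alpha\le R$ and hence $\|u\|_\alpha\lesssim R+\|z\|_\alpha$; here I would invoke the critical inequality for $B$ proved in Section~\ref{s:technical}, which carries a logarithmic loss at the threshold $\alpha=\tfrac12$, and close the linearized estimate by Gronwall, picking up a factor $q(R)$ polynomial in $R$ and a factor polynomial in $\|x\|_\alpha$ of degree $\gamma\ge2$ (the bilinearity of $B$ forcing at least quadratic growth). This gives
\[
  |\semi_t^R\phi(x+h)-\semi_t^R\phi(x)|\le\frac{c\,\|\phi\|_\infty}{t\wedge1}\bigl(1+\|x\|_\alpha^\gamma\bigr)q(R)\,\|h\|_\alpha,
\]
uniformly over $\|h\|_\alpha\le1$.

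Third, I would estimate the exit probability $\Pb_x[\tau_R\le t]$. Using the local smooth theory for \eqref{e:Veq} available above the Fujita--Kato threshold together with the Gaussian tail of $z$ in $V_\alpha$ (finite by the first paragraph), I expect a bound of the form $\Pb_x[\tau_R\le t]\le C\exp\bigl(-c(R-\|x\|_\alpha)^2/(t\wedge1)\bigr)$ for $R$ above a fixed multiple of $\|x\|_\alpha$. Combining the three displays and applying the second estimate at both $x$ and $x+h$,
\[
  |\semi_t\phi(x+h)-\semi_t\phi(x)|\le\frac{c\|\phi\|_\infty}{t\wedge1}\bigl(1+\|x\|_\alpha^\gamma\bigr)q(R)\,\|h\|_\alpha+C\|\phi\|_\infty\e^{-c(R-\|x\|_\alpha)^2/(t\wedge1)}.
\]
The logarithmic factor in \eqref{e:lip} then emerges by optimizing in $R$: choosing $R\approx\|x\|_\alpha+\sqrt{(t\wedge1)\log(\e\|h\|_\alpha^{-1})}$ makes the exit term comparable to $\|h\|_\alpha$ and turns the surviving $q(R)$ into the advertised $\log\bigl(\e\|h\|_\alpha^{-1}\bigr)$, after absorbing lower-order powers into the constant $c$ and the exponent $\gamma$.

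The main obstacle is the second step in the critical regime. Away from criticality the nonlinear contribution to the control could be bounded by a clean power of $R$; as $\alpha$ descends to $\tfrac12$ the bilinear term $B$ is only borderline controllable in $V_\alpha$, and it is exactly the logarithmic interpolation inequality for $B$ (Section~\ref{s:technical}) that must be proved and fed into the Gronwall step without destroying the integrability in $s$ secured by $\alpha>\tfrac12+2\alpha_0$. Reconciling the competing constraints — enough regularity of $z$ to live in $V_\alpha$ ($\alpha<1+2\alpha_0$), enough margin above $V$ to estimate $B$ in the smooth coupling ($\alpha\ge1+\alpha_0$), and integrable control cost ($\alpha\ge\tfrac12+2\alpha_0$) — is what pins down the admissible window, and arranging that the logarithmic loss in $B$ propagates to a single power of $\log$ in \eqref{e:lip} is the delicate point.
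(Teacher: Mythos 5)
Your overall architecture (cut-off dynamics, weak--strong coupling, exit-probability estimate, smoothing of the regularized semigroup, final optimization) is the same as the paper's, and your identification of where each constraint on $\alpha$ enters is essentially correct. But there is a genuine gap at the very first step: you couple $\semi_t$ with $\semi_t^R$ over the \emph{whole} interval $[0,t]$, and this forces you to need the bound $\Pb_x[\tau_R\le t]\le C\exp\bigl(-c(R-\|x\|_\alpha)^2/(t\wedge1)\bigr)$ for a \emph{fixed} $t$ and $R\to\infty$. No such bound is available, and none can be expected: Proposition~\ref{p:blowup} gives the Gaussian tail $c_0\e^{-a_0R^2/9T}$ only under the restriction $T\leq c'R^{-4/((2\alpha-1)\wedge2)}$, and this restriction is not a technical artifact --- inside the ball of radius $R$ the cut-off dynamics \emph{is} the Navier--Stokes dynamics, whose nonlinearity can deterministically push $\|u\|_\alpha$ from $\tfrac23 R$ up to $R$ on time scales of order $R^{-2}$ (cf.\ inequality~\eqref{e:gt32}); for times beyond that scale, controlling $\Pb_x[\tau_R\le t]$ would amount to a high-probability bound on a supercritical norm of the 3D equations over unit time intervals, which is exactly what is unknown. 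The paper's proof avoids this by using the Markov property of Theorem~\ref{t:markov}: writing $\semi_t\phi=\semi_\epsilon\psi_\epsilon$ with $\psi_\epsilon=\semi_{t-\epsilon}\phi\in B_b(H)$, the comparison with the cut-off semigroup is made only on a short window $[0,\epsilon]$ with $\epsilon\leq cR^{-\gamma}$, where Proposition~\ref{p:blowup} does apply; the radius is then \emph{fixed} at $R=3(1+\|x\|_\alpha)$ and the logarithm in~\eqref{e:lip} is produced by optimizing in $\epsilon$, namely $\epsilon\approx(1\wedge t\wedge c_4R^{-2})/\log\bigl(\e\|h\|_\alpha^{-1}\bigr)$, not by letting $R$ grow with $\|h\|_\alpha$.

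A second, related problem is your claim that the regularized semigroup satisfies a Lipschitz bound with a factor $q(R)$ polynomial in $R$ and uniform in $t$. Whether one proves the smoothing by Girsanov steering (your route) or by the Bismut--Elworthy--Li formula (the paper's Propositions~\ref{p:RSFhigh} and~\ref{p:RSFlow}), the Gronwall step on the linearized or controlled equation inevitably produces a factor $\e^{cR^2t}$ (or $\e^{cR^\gamma t}$ in the low range of $\alpha$), not a polynomial in $R$. In the paper this exponential is harmless because the argument runs only up to time $\epsilon$ with $R^2\epsilon\lesssim1$; in your scheme, with $t$ fixed and $R\approx\|x\|_\alpha+\sqrt{(t\wedge1)\log(\e\|h\|_\alpha^{-1})}$, the factor $\e^{cR^2t}$ is of order $\|h\|_\alpha^{-ct(t\wedge1)}$ and overwhelms the factor $\|h\|_\alpha$ you are trying to preserve, so even granting your exit estimate the optimization in $R$ would not close. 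Both defects are repaired simultaneously by the single missing idea: localize in time via the Markov property before regularizing.
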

\begin{proof}
We follow the lines of the proof of \cite[Theorem 5.11]{FlaRom08}. Let
$x\in V_\alpha$ and $h\in V_\alpha$ with $\|h\|_\alpha\leq 1$, and choose
$R\geq 3(1+\|x\|_\alpha)$. Fix $t>0$ and let $\epsilon>0$ be such that
$\epsilon\leq c R^{-\gamma}$ (where $c$ $\gamma$ are so that
Proposition~\ref{p:blowup} holds true) and
$\epsilon\not\in T_{\Pb_x}\cup T_{\Pb_{x+h}}$, where $T_\Pb$ is the set
of exceptional times where the energy inequality fails to hold for $\Pb$ (see
Definition~\ref{d:ems}). Then for every $\phi\in B_b(H)$ with $\|\phi\|_\infty\leq 1$,
\begin{multline*}
  |\semi_t\phi(x+h) - \semi_t\phi(x)|
    \leq |\semi_\epsilon\psi_\epsilon(x+h) - \semi_\epsilon^\besp{\alpha,R}\psi_\epsilon(x+h)| + {}\\
        + |\semi_\epsilon^\besp{\alpha,R}\psi_\epsilon(x+h) - \semi_\epsilon^\besp{\alpha,R}\psi_\epsilon(x)|
        + |\semi_\epsilon^\besp{\alpha,R}\psi_\epsilon(x) - \semi_\epsilon\psi_\epsilon(x)|,
\end{multline*}
where we have set $\psi_\epsilon = \semi_{t-\epsilon}\phi$ and we have used the
Markov property (in the version of Theorem~\ref{t:markov}). Now, by
Theorem~\ref{t:weakstrong} and Proposition~\ref{p:blowup},
\[
\begin{aligned}
  |\semi_\epsilon^\besp{\alpha,R}\psi_\epsilon(x) - \semi_\epsilon\psi_\epsilon(x)|
    & = \E^{\Pb_x^\besp{\alpha,R}}[\psi_\epsilon(\xi_\epsilon)\uno_{\{\tau_x^\besp{\alpha,R}<\epsilon\}}]
      - \E^{\Pb_x}[\psi_\epsilon(\xi_\epsilon)\uno_{\{\tau_x^\besp{\alpha,R}<\epsilon\}}]\\
    &\leq c\|\phi\|_\infty\e^{-c\frac{R^2}{\epsilon}},
\end{aligned}
\]
and similarly for the term in $x+h$. The middle term can be estimated using
either Propositions~\ref{p:RSFhigh} or~\ref{p:RSFlow}, depending on the value
of $\alpha$. We consider first the case $\alpha>\tfrac32$, so that
\[
  |\semi_t\phi(x+h) - \semi_t\phi(x)|
    \leq  c_1\e^{-c_2\frac{R^2}{\epsilon}}
        + \frac{c_1}{\epsilon}\|h\|_\alpha\e^{c_3R^2\epsilon}
\]
for constants $c_1,\dots,c_3$ and $R\geq 3(1+\|x\|_\alpha)$,
$\epsilon\leq (c_4R^{-2})$ and $\epsilon\leq \tfrac12(t\wedge 1)$. As in the proof of
\cite[Theorem 3.1]{FlaRom07}, we choose the values
$R=3(1+\|x\|_\alpha)$ and $\epsilon\approx(1\wedge t\wedge c_4R^{-2})/(-\log(\|h\|_\alpha/\e))$
to get~\eqref{e:lip}.

On the other hand, if $\alpha\leq\frac32$, then
\[
  |\semi_t\phi(x+h) - \semi_t\phi(x)|
    \leq  c_1\e^{-c_2\frac{R^2}{\epsilon}}
        + \frac{c_1}{\epsilon}\|h\|_\alpha\e^{c_3R^\gamma\epsilon}
\]
for $R\geq 3(1+\|x\|_\alpha)$, $\epsilon\leq (c_4R^{-\gamma})$ and
$\epsilon\leq \tfrac12(t\wedge 1)$, with $\gamma=4/(3+4\alpha_0-2\alpha)$.
A similar choice of $\epsilon$ and $R$ leads again to~\eqref{e:lip}.
\end{proof}
The rest of the section contains the arguments needed to complete the proof
of the above theorem.
\subsection{Differentiability of the approximated flow}

Given $\alpha\in(\tfrac12,1+2\alpha_0)$,
let $\semi_t^\besp{\alpha,R}\varphi(x) = \E[\varphi(u_x^\besp{\alpha,R}(t)]$
be the transition semigroup associated to problem~\eqref{e:strongR}, with
$x\in V_\alpha$ and $\varphi:H\to\R$ bounded measurable. In this section
we analyse the regularity of this semigroup.
\begin{proposition}\label{p:RSFhigh}
Assume {\nuno} and {\ndue} of Assumption~\ref{a:mainass}. Given $R\geq1$ and $\alpha$
such that
\begin{equation}\label{e:AlfaCond2}
  \alpha>\frac32
  \quad\text{and}\quad
  \frac12 + 2\alpha_0 \leq \alpha < 1+2\alpha_0,
\end{equation}
the transition semigroup $(\semi_t^\besp{\alpha,R})_{t\geq0}$ associated to
problem \eqref{e:strongR} is $V_\alpha$-strong Feller for all $t>0$. Moreover,
there are numbers $c_1>0$ and $c_2>0$ such that for every $x_0\in V_\alpha$,
for every $\varphi\in B_b(H)$ and for every $h\in V_\alpha$,
\[
  |\semi^\besp{\alpha,R}_t\varphi(x_0+h)-\semi^\besp{\alpha,R}_t\varphi(x_0)|
    \leq \frac{c_1}{t\sqrt{\nu}}\|h\|_\alpha\e^{\frac{c_2}{\nu}R^2t}\|\varphi\|_\infty.
\]
\end{proposition}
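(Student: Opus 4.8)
The plan is to prove a \emph{gradient estimate} for $\semi_t^\besp{\alpha,R}$ via a Bismut--Elworthy--Li formula and then integrate it along the segment joining $x_0$ to $x_0+h$. First I would record that, because the nonlinearity in \eqref{e:strongR} is truncated at level $R$ in the $V_\alpha$ norm and $\alpha>\tfrac32$ forces $V_\alpha$ to embed into $L^\infty$ (so that the Navier--Stokes product is controlled in $V_\alpha$), the truncated drift is globally Lipschitz and smooth on $V_\alpha$. Standard theory for semilinear stochastic equations with additive noise then yields a flow $x\mapsto u_x^\besp{\alpha,R}(t)$ differentiable in $V_\alpha$, whose derivative $\eta_t^h = D_h u_x^\besp{\alpha,R}(t)$ solves the linearised equation $\dot\eta + \nu A\eta + B(\eta,u) + B(u,\eta) = 0$ (modulo the bounded lower-order correction coming from the derivative of the cutoff), with $\eta_0 = h$ and $u = u_{x_0}^\besp{\alpha,R}$.

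The nondegeneracy {\ndue} is what makes the representation available: since $A^{3/4+\alpha_0}\cov^{\frac12}$ is boundedly invertible, $\cov^{-\frac12}\eta_s^h$ is well defined with $\|\cov^{-\frac12}\eta_s^h\|_H \leq C\|\eta_s^h\|_{3/2+2\alpha_0}$. Writing $\gamma = \tfrac32 + 2\alpha_0$, the formula reads
\[
  D_h\semi_t^\besp{\alpha,R}\varphi(x_0)
    = \frac1t\,\E\Bigl[\varphi(u_{x_0}^\besp{\alpha,R}(t))\int_0^t\scal[H]{\cov^{-\frac12}\eta_s^h,\de[W]_s}\Bigr],
\]
and Cauchy--Schwarz together with the Itô isometry give
\[
  |D_h\semi_t^\besp{\alpha,R}\varphi(x_0)|
    \leq \frac{\|\varphi\|_\infty}{t}\Bigl(\E\int_0^t\|\eta_s^h\|_\gamma^2\de[s]\Bigr)^{1/2}.
\]
Everything is thus reduced to an a priori bound of the form $\E\int_0^t\|\eta_s^h\|_\gamma^2\de[s] \leq \tfrac{C}{\nu}\e^{\frac{c}{\nu}R^2t}\|h\|_\alpha^2$.

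The heart of the argument, and the step I expect to be most delicate, is this dissipation estimate at the \emph{critical} regularity. Rather than relying on the singular pointwise smoothing bound $\|\eta_s\|_\gamma \lesssim (\nu s)^{-(\gamma-\alpha)/2}\|h\|_\alpha$, which fails to be square--integrable in time at the endpoint $\alpha = \tfrac12 + 2\alpha_0$, I would test the linearised equation against $A^{\gamma-1}\eta$. This produces
\[
  \tfrac12\tfrac{d}{dt}\|\eta\|_{\gamma-1}^2 + \nu\|\eta\|_\gamma^2
    = -\scal{B(\eta,u)+B(u,\eta),A^{\gamma-1}\eta},
\]
so that the dissipation supplies exactly $\nu\int_0^t\|\eta_s\|_\gamma^2\de[s]$, while the initial datum enters only through $\|h\|_{\gamma-1}\leq C\|h\|_\alpha$ — admissible precisely because $\gamma-1 = \tfrac12+2\alpha_0\leq\alpha$, with equality allowed at the endpoint. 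The nonlinear terms are handled by the Navier--Stokes inequality of Section~\ref{s:technical}: using $\alpha>\tfrac32$ to place $u$ in an $L^\infty$-type space one gets $|\scal{B(\eta,u)+B(u,\eta),A^{\gamma-1}\eta}|\leq C\|u\|_\alpha\|\eta\|_\gamma\|\eta\|_{\gamma-1}$, and Young's inequality absorbs $\tfrac\nu2\|\eta\|_\gamma^2$ into the dissipation at the cost of $\tfrac{C}\nu\|u\|_\alpha^2\|\eta\|_{\gamma-1}^2\leq\tfrac{C}\nu R^2\|\eta\|_{\gamma-1}^2$, the cutoff guaranteeing $\|u\|_\alpha\lesssim R$. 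Gronwall's lemma then yields $\nu\int_0^t\|\eta_s\|_\gamma^2\de[s]\leq C\e^{\frac{c}\nu R^2t}\|h\|_\alpha^2$; it is here that the quadratic exponent $R^2$, rather than the power occurring in Proposition~\ref{p:RSFlow}, originates from the condition $\alpha>\tfrac32$.

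Finally I would assemble the pieces: the dissipation estimate gives $\E\int_0^t\|\eta_s^h\|_\gamma^2\de[s]\leq\tfrac{C}\nu\e^{\frac{c}\nu R^2 t}\|h\|_\alpha^2$, so that taking square roots and substituting into the gradient bound yields $|D_h\semi_t^\besp{\alpha,R}\varphi(x_0)|\leq\frac{c_1}{t\sqrt\nu}\|h\|_\alpha\e^{\frac{c_2}\nu R^2 t}\|\varphi\|_\infty$. Integrating this directional derivative along $s\mapsto x_0+sh$, $s\in[0,1]$, produces the stated finite--difference inequality and in particular shows that $\semi_t^\besp{\alpha,R}\varphi$ is continuous on $V_\alpha$ for every bounded measurable $\varphi$, i.e. the $V_\alpha$--strong Feller property. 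The points requiring genuine care are the rigorous justification of the Bismut--Elworthy--Li formula in this infinite--dimensional truncated setting — most safely carried out on Galerkin approximations and passed to the limit with the a priori bounds — and the sharp bookkeeping of the endpoint $\alpha=\tfrac12+2\alpha_0$ in the nonlinear estimate.
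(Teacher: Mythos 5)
Your skeleton --- the Bismut--Elworthy--Li formula, the reduction via {\ndue} to controlling $\int_0^t\|D_h\xi_s\|_{\frac32+2\alpha_0}^2\de[s]$, an energy estimate for the linearised flow, and integration along the segment from $x_0$ to $x_0+h$ --- is the same as the paper's. The gap is the level at which you run the energy estimate, and concretely your dismissal of the derivative-of-the-cutoff term in \eqref{e:flowderivative} as a ``bounded lower-order correction''. Testing against $A^{\gamma-1}\eta$ with $\gamma-1=\tfrac12+2\alpha_0$, that term is
\[
  \frac{|\chi_R'(\|u\|_\alpha)|}{\|u\|_\alpha}\,\bigl|\scal[V_\alpha]{u,\eta}\bigr|\,\bigl|\scal{B(u,u),A^{\gamma-1}\eta}\bigr|
  \leq \frac{c}{R}\,\|\eta\|_\alpha\cdot c R^2\,\|\eta\|_\gamma
  = c R\,\|\eta\|_\alpha\,\|\eta\|_\gamma,
\]
and $\|\eta\|_\alpha$ is \emph{not} dominated by $\|\eta\|_{\gamma-1}$: except at the endpoint one has $\alpha>\gamma-1$, so you must interpolate, $\|\eta\|_\alpha\leq\|\eta\|_{\gamma-1}^{1-\theta}\|\eta\|_\gamma^{\theta}$ with $\theta=\alpha-\tfrac12-2\alpha_0\in(0,\tfrac12)$, and Young's inequality then costs $c\,\nu^{-\frac{1+\theta}{1-\theta}}R^{\frac2{1-\theta}}\|\eta\|_{\gamma-1}^2$, where $\tfrac2{1-\theta}=\tfrac4{3+4\alpha_0-2\alpha}>2$ whenever $\alpha>\tfrac12+2\alpha_0$. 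Gronwall then yields the exponential of \eqref{e:RSFlow}: working at the level $V_{\frac12+2\alpha_0}$ is exactly what the paper does in Proposition~\ref{p:RSFlow}, and it is precisely why that proposition carries the worse rate $R^{4/(3+4\alpha_0-2\alpha)}$. Your bilinear bound $|\scal{B(\eta,u)+B(u,\eta),A^{\gamma-1}\eta}|\leq C\|u\|_\alpha\|\eta\|_\gamma\|\eta\|_{\gamma-1}$ is correct (Lemma~\ref{l:Bnostro} with $a=\gamma-1$, $b=\alpha$, $c=1-\gamma$), but it does not cover the cutoff term, so your argument as written proves the strong Feller property with the estimate of Proposition~\ref{p:RSFlow}, not the stated $\e^{\frac{c_2}{\nu}R^2t}$ bound (except at the single value $\alpha=\tfrac12+2\alpha_0$).

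The paper's proof instead runs the energy estimate in $V_\alpha$ itself, the norm the cutoff actually controls: there the cutoff term obeys $\frac{|\chi_R'|}{\|u\|_\alpha}|\scal[V_\alpha]{u,\tu}|\leq\frac{c}{R}\|\tu\|_\alpha$ with no interpolation, Lemma~\ref{l:Bnostro} (with $a=b=\alpha$, $c=-\alpha$, legitimate because $\alpha>\tfrac32$) bounds every term by $cR\|\tu\|_\alpha\|\tu\|_{\alpha+1}$, and Gronwall gives $\E\int_0^t\|\tu\|_{\alpha+1}^2\de[s]\leq\nu^{-1}\|h\|_\alpha^2\e^{cR^2t/\nu}$. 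The hypothesis $\alpha\geq\tfrac12+2\alpha_0$ is used only at the very last step, where $\alpha+1\geq\tfrac32+2\alpha_0$ makes the dissipation norm dominate the norm demanded by the BEL formula; in particular this also settles the endpoint, so the non-square-integrable pointwise smoothing bound that motivated your choice of level is a non-issue. To obtain the $R^2$ exponent claimed in the statement, you must estimate at level $\alpha$.
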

\begin{proof}
Fix $\alpha$ as in \eqref{e:AlfaCond2} and let $t>0$ and $\varphi\in B_b(H)$
with $\|\varphi\|_\infty\leq1$. We proceed as in~\cite[Proposition 5.13]{FlaRom08}.
By the Bismut, Elworthy and Li formula,
\begin{equation}\label{e:BEL}
  \begin{split}
    |\semi^\besp{\alpha,R}_t\varphi(x_0+h)-\semi^\besp{\alpha,R}_t\varphi(x_0)|
     &\leq \frac{c}{t}\sup_{\eta\in[0,1]}\E^{P^\besp{R}_{x_0+\eta h}}\Bigl[\bigl(\int_0^t\|\cov^{-\frac12}D_h\xi_s\|_H^2\de[s]\bigr)^{\frac12}\Bigr]\\
     &\leq \frac{c}{t}\sup_{\eta\in[0,1]}\E^{P^\besp{R}_{x_0+\eta h}}\Bigl[\bigl(\int_0^t\|D_h\xi_s\|_{\frac32+2\alpha_0}^2\de[s]\bigr)^{\frac12}\Bigr],
  \end{split}
\end{equation}
since $\|\cov^{-\frac12}D_h\xi_s\|_H\leq C\|D_h\xi_s\|_{3/2+2\alpha_0}$,
by {\ndue} on $\cov$, and so we only have to estimate the inner integral.
For every $x\in V_\alpha$ and $h\in V_\alpha$,
denote by $u_x^\besp{R}$ the process solution to \eqref{e:strongR}
starting at $x$, and by $\tu = D_h u^\besp{R}_x$ the derivative
of the flow in the direction $h$. Then $\tu$ solves
\begin{multline}\label{e:flowderivative}
  \partial_t\tu + \nu A\tu
    + \frac{\chi_R'(\|u^\besp{R}_x\|_\alpha)}{\|u^\besp{R}_x\|_\alpha} \scal[V_\alpha]{u^\besp{R}_x, \tu} B(u^\besp{R}_x,u^\besp{R}_x) + {}\\
    + \chi_R(\|u^\besp{R}_x\|_\alpha) [B(\tu,u^\besp{R}_x)+B(u^\besp{R}_x, \tu)]
  = 0,
\end{multline}
with initial condition $\tu(0)=h$, and so
\[
\begin{aligned}
  \frac{d}{dt}\|\tu\|_\alpha^2 + 2\nu\|\tu\|_{\alpha+1}^2
    &\leq    2\bigl|\chi_R'(\|u^\besp{R}_x\|_\alpha) \scal[V_\alpha]{\tu,B(u^\besp{R}_x,u^\besp{R}_x)}\bigr| \|\tu\|_\alpha\\
    &\quad + 2\chi_R(\|u^\besp{R}_x\|_\alpha)|\scal[V_\alpha]{\tu,B(\tu,u^\besp{R}_x)+B(u^\besp{R}_x,\tu)}|.
\end{aligned}
\]
In short, everything boils down to estimating the right-hand side (briefly
denoted below by \term{r}). By Lemma \ref{l:Bnostro} (with $a=b=\alpha$ and $c=-\alpha$)
and Young's inequality,
\[
  \term{r}
    \leq   \frac{c}{R} R^2 \|\tu\|_\alpha \|\tu\|_{\alpha+1}
         + c R \|\tu\|_\alpha \|\tu\|_{\alpha+1}
    \leq   \nu\|\tu\|_{\alpha+1}^2 + \frac{c}{\nu}R^2 \|\tu\|_\alpha^2
\]
and so, by Gronwall's lemma,
\[
  \E\Bigl[\int_0^t\|\tu\|_{\alpha+1}^2\de[s]\Bigr]
    \leq \frac1\nu \|h\|_{\alpha}^2\e^{\frac{c}{\nu}R^2t},
\]
which is enough to bound \eqref{e:BEL}, as, by the choice of $\alpha$,
$1+\alpha\geq\frac32+2\alpha_0$.
\end{proof}
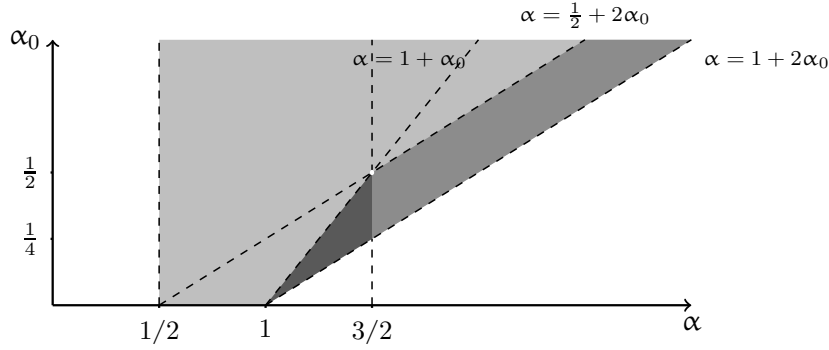
\begin{figure}[h]
  \centering
  \begin{tikzpicture}[x=28mm,y=35.2mm,line width=0.3mm]
    \fill [color=black!25] (0.5,0) -- (1,0) -- (3,1) --(0.5,1);
    \fill [color=black!45] (1.5,0.25) -- (3,1) -- (2.5,1) -- (1.5,0.5);
    \fill [color=black!65] (1,0) -- (1.5,0.25) -- (1.5,0.5);
    \draw [->] (0,0) -- (3,0) node [anchor=north] {\small $\alpha$};
    \draw [->] (0,0) -- (0,1) node [anchor=east] {\small $\alpha_0$};
    \draw (0,0.25) -- (-0.01,0.25) node [anchor=east] {\footnotesize $\tfrac14$};
    \draw (0, 0.5) -- (-0.01, 0.5) node [anchor=east] {\footnotesize $\tfrac12$};
    \draw (0.5, 0) -- (0.5, -0.01) node [anchor=north] {\footnotesize $1/2$};
    \draw (1, 0) -- (1, -0.01) node [anchor=north] {\footnotesize $1$};
    \draw (1.5, 0) -- (1.5, -0.01) node [anchor=north] {\footnotesize $3/2$};
    \draw [dashed,line width=0.2mm] (0.5, 0) -- (0.5, 1);
    \draw [dashed,line width=0.2mm] (1.5, 0) -- (1.5, 0.25);
    \draw [dashed,line width=0.2mm] (1.5, 0.5) -- (1.5, 1);
    \draw [dashed,line width=0.2mm] (1, 0) -- (2, 1) node [anchor=north east] {\tiny $\alpha=1+\alpha_0$};
    \draw [dashed,line width=0.2mm] (0.5, 0) -- (2.5,1) node [anchor=south] {\tiny $\alpha=\frac12+2\alpha_0$};
    \draw [dashed,line width=0.2mm] (1, 0) -- (3, 1) node [anchor=north west] {\tiny $\alpha=1+2\alpha_0$};
    \fill [color=white] (1.5,0.5) circle (0.01);
  \end{tikzpicture}
\caption{The gray areas correspond to existence (Theorem \ref{t:Rexistuniq}),
  the slightly darker gray area corresponds to Proposition~\ref{p:RSFhigh}),
  the darkest area corresponds to Proposition~\ref{p:RSFlow}.}
\end{figure}
\begin{proposition}\label{p:RSFlow}
Assume {\nuno} and {\ndue} of Assumption~\ref{a:mainass}. Given $R\geq1$ and
$\alpha$ such that
\begin{equation}\label{e:AlfaCond3}
  \alpha<\frac32
    \qquad\text{and}\qquad
  1 + \alpha_0 \leq \alpha < 1+2\alpha_0,
\end{equation}
the transition semigroup $(\semi_t^\besp{\alpha,R})_{t\geq0}$
associated to problem \eqref{e:strongR} is $V_\alpha$-strong Feller
for all $t>0$. Moreover, there are numbers $c_1>0$ and $c_2>0$ such that for every
$x_0\in V_\alpha$, for every $\varphi\in B_b(H)$ and for every $h\in V_\alpha$,
\begin{equation}\label{e:RSFlow}
|\semi^\besp{\alpha,R}_t\varphi(x_0+h)-\semi^\besp{\alpha,R}_t\varphi(x_0)|
\leq \frac{c_1}{t\sqrt{\nu}}\|h\|_{\frac12+2\alpha_0}\exp\Bigl(c_2 t \Bigl(\frac{R^4}{\nu^{(2\alpha+1-4\alpha_0)}}\Bigr)^{\frac1{3+4\alpha_0-2\alpha}}\Bigr).
\end{equation}
The strong Feller property as well as formula~\eqref{e:RSFlow} are also true
if $\alpha=\tfrac32$ and $\alpha_0\in(\tfrac14,\tfrac12)$.
\end{proposition}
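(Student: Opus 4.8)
The plan is to follow the scheme of Proposition~\ref{p:RSFhigh}, the two differences being the Sobolev level at which the energy estimate is run and the bilinear estimate itself, which the condition $\alpha<\tfrac32$ forces to change. I start from the Bismut--Elworthy--Li formula \eqref{e:BEL}, whose right-hand side already carries the norm $\|\tu\|_{3/2+2\alpha_0}$ through {\ndue}, so that I am left to bound $\E[(\int_0^t\|\tu\|_{3/2+2\alpha_0}^2\de[s])^{1/2}]$, where $\tu=D_h u^\besp{R}_x$ solves \eqref{e:flowderivative}. The level $3/2+2\alpha_0$ is precisely $\beta+1$ for $\beta:=\tfrac12+2\alpha_0$, and this fixes the norm in which the derivative equation must be tested: pairing \eqref{e:flowderivative} with $A^\beta\tu$ gives
\[
  \tfrac12\tfrac{d}{dt}\|\tu\|_\beta^2 + \nu\|\tu\|_{\beta+1}^2 = \term{r},
\]
with initial datum $\|\tu(0)\|_\beta=\|h\|_\beta=\|h\|_{1/2+2\alpha_0}$, which is the source of the prefactor in \eqref{e:RSFlow}.

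The heart of the matter is the estimate of $\term{r}$, that is of the contribution of the $\chi_R'$--term and of the contribution of $\chi_R[B(\tu,u^\besp{R}_x)+B(u^\besp{R}_x,\tu)]$ in \eqref{e:flowderivative}. In each of them I would use the cut-off to replace $\|u^\besp{R}_x\|_\alpha$ by $R$ and apply Lemma~\ref{l:Bnostro} to the trilinear expressions, choosing the indices so that $u^\besp{R}_x$ sits at the Sobolev level allowed by the scaling while the two copies of $\tu$ are balanced at a common intermediate level $m\in(\beta,\beta+1)$. The balanced application of Lemma~\ref{l:Bnostro} places $u^\besp{R}_x$ in $V_{2+2\alpha_0-\alpha}$, and the lower restriction $\alpha\ge 1+\alpha_0$ of \eqref{e:AlfaCond3} is exactly the inequality $2+2\alpha_0-\alpha\le\alpha$ under which that norm is still controlled by the cut-off, $\|u^\besp{R}_x\|_{2+2\alpha_0-\alpha}\le\|u^\besp{R}_x\|_\alpha\le R$. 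Interpolating the two $\tu$--factors between $\|\tu\|_\beta$ and $\|\tu\|_{\beta+1}$ then yields
\[
  \term{r} \le c\,R\,\|\tu\|_\beta^{1-s}\,\|\tu\|_{\beta+1}^{1+s}, \qquad s=\alpha-\tfrac12-2\alpha_0,
\]
where $0<s<1$ in the range \eqref{e:AlfaCond3}, since $1-s=\tfrac12(3+4\alpha_0-2\alpha)$ and $1+s=\tfrac12(2\alpha+1-4\alpha_0)$ are both positive and the latter is $<2$ because $\alpha<\tfrac32$.

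I would then close the estimate by Young's inequality with conjugate exponents $\tfrac{2}{1+s}$ and $\tfrac{2}{1-s}$, absorbing $\nu\|\tu\|_{\beta+1}^2$ into the left-hand side and leaving a multiple $K\|\tu\|_\beta^2$ with
\[
  K = c_2\Bigl(\frac{R^4}{\nu^{\,2\alpha+1-4\alpha_0}}\Bigr)^{\!\frac{1}{3+4\alpha_0-2\alpha}}.
\]
The exponents match because $3+4\alpha_0-2\alpha=2(1-s)$ and $2\alpha+1-4\alpha_0=2(1+s)$: the power of $R$ is $\tfrac{2}{1-s}=\tfrac{4}{3+4\alpha_0-2\alpha}$, the power of $\nu$ is $\tfrac{1+s}{1-s}$, and the identity $(1-s)+(1+s)=2$ is what produces the $R^4$. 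Gronwall's lemma now gives $\nu\int_0^t\|\tu\|_{\beta+1}^2\de[s]\le\|h\|_{1/2+2\alpha_0}^2\,\e^{Kt}$, and inserting this into \eqref{e:BEL} yields \eqref{e:RSFlow}.

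For the endpoint $\alpha=\tfrac32$ in the final remark the scheme is unchanged; the only delicate point is that at $\alpha=\tfrac32$ one of the Sobolev embeddings hidden in Lemma~\ref{l:Bnostro} lands on the $L^\infty$ threshold. The hypothesis $\alpha_0\in(\tfrac14,\tfrac12)$ leaves the strictly positive scaling margin $3+4\alpha_0-2\alpha=4\alpha_0>1$, which is enough room to perturb the indices away from the endpoint and run the same interpolation. The step I expect to be the main obstacle is precisely this bilinear estimate of $\term{r}$: unlike in Proposition~\ref{p:RSFhigh} the two copies of $\tu$ cannot both be kept at the base level $\beta$, and the whole quantitative content of \eqref{e:RSFlow} --- in particular the blow-up exponent $\gamma=4/(3+4\alpha_0-2\alpha)$ --- is governed by how little of the dissipation $\|\tu\|_{\beta+1}$ one is forced to spend, which in turn is dictated by the admissible index range in Lemma~\ref{l:Bnostro} and by the constraint $\alpha\ge 1+\alpha_0$.
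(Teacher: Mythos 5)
Your frame is the paper's --- the Bismut--Elworthy--Li formula \eqref{e:BEL}, an energy estimate for $\tu$ at level $\beta=\tfrac12+2\alpha_0$, one interpolation, Young's inequality with conjugate exponents $\tfrac2{1\pm s}$, Gronwall --- and your target inequality $\term{r}\le cR\,\|\tu\|_\beta^{1-s}\|\tu\|_{\beta+1}^{1+s}$, $s=\alpha-\beta$, is exactly the one the paper derives. The gap is in how you derive it. For $\scal[V_\beta]{\tu,B(\tu,u^\besp{R}_x)}=\scal[H]{B(\tu,u^\besp{R}_x),A^\beta\tu}$ your ``balanced'' choice of indices in Lemma~\ref{l:Bnostro} is $a=m$, $b=2+2\alpha_0-\alpha$, $c+1=m-2\beta$, with $m=\tfrac12(\beta+1+\alpha)$. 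But the lemma requires \emph{both} $a\ge(-c)\vee0$ and $b\ge(-c)\vee0$, and here $-c=1+2\beta-m=\tfrac54+3\alpha_0-\tfrac\alpha2$, so the condition $b\ge-c$ reads $\alpha\le\tfrac32-2\alpha_0$. Inside the range \eqref{e:AlfaCond3} this holds only when $\alpha_0\le\tfrac16$ (and then only for part of the range); for $\alpha_0>\tfrac16$ --- precisely the new regime this paper is written to reach --- it fails for \emph{every} admissible $\alpha$. Nor is this a defect of the lemma: the balanced trilinear bound is simply false there. Take $\tu=A+B_N$ with $A$ a fixed low mode and $B_N$ a mode of frequency $N$ and amplitude $N^{-m}$, and take $u$ a single mode of frequency $\approx-N$ with amplitude $1$ (all divergence free); then the interaction $\scal[H]{B(A,u),A^\beta B_N}$ has size $\sim N^{1+2\beta-m}$, while $\|\tu\|_m^2\|u\|_{2+2\alpha_0-\alpha}\sim N^{2+2\alpha_0-\alpha}$, and the ratio diverges like $N^{\frac12(\alpha+2\alpha_0-3/2)}$ exactly when $\alpha>\tfrac32-2\alpha_0$. (Such a $u$ is of course excluded by the cut-off, but your argument invokes the trilinear estimate \emph{before} replacing $\|u^\besp{R}_x\|_{2+2\alpha_0-\alpha}$ by $R$, so it needs the estimate for arbitrary $u$.)

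The repair is the paper's index choice, which is asymmetric rather than balanced: apply Lemma~\ref{l:Bnostro} with $a=b=\alpha$ and $c=-\beta$, i.e.\ keep $u^\besp{R}_x$ \emph{and one copy} of $\tu$ at level $\alpha$, and let only the tested copy take the full dissipation level $\beta+1$. The hypotheses of the lemma then read $\alpha\ge\beta$ and $2(2\alpha-\beta)\ge3$, and the latter is exactly $\alpha\ge1+\alpha_0$ --- the same inequality you found, but entering as the scaling condition of the lemma rather than through the embedding $V_\alpha\subset V_{2+2\alpha_0-\alpha}$. This gives $\term{r}\le c\,\|\tu\|_\alpha\|u^\besp{R}_x\|_\alpha\|\tu\|_{\beta+1}\le cR\,\|\tu\|_\alpha\|\tu\|_{\beta+1}$, and interpolating the \emph{single} factor $\|\tu\|_\alpha\le\|\tu\|_\beta^{1-s}\|\tu\|_{\beta+1}^{s}$ lands precisely on your target bound; from there your Young/Gronwall/BEL steps and your constant $K$ are correct and coincide with the paper's. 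The same correction handles the $\chi_R'$ term (bound $|\scal[V_\alpha]{u^\besp{R}_x,\tu}|\le\|u^\besp{R}_x\|_\alpha\|\tu\|_\alpha$ and apply the lemma with $a=b=\alpha$, $c=-\beta$ to $B(u^\besp{R}_x,u^\besp{R}_x)$), and the endpoint $\alpha=\tfrac32$ is then treated, as in the paper, by perturbing the indices of the lemma by an $\epsilon\in(0,1-2\alpha_0)$, the hypothesis $\alpha_0>\tfrac14$ guaranteeing room for such a perturbation.
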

\begin{proof}
Let $\alpha$ be as in condition \eqref{e:AlfaCond3} and set
$\gamma=2\alpha_0+\frac12$. Fix $x\in V_\alpha$ and $h\in V_\alpha$, and let
$\tu = D_h u_x^\besp{R}$ be the derivative of the flow along $h$, where
$u_x^\besp{R}$ is the solution to problem~\eqref{e:strongR} starting at $x$.
We proceed as in the proof of the previous proposition, so that we only need
to estimate the right-hand side of~\eqref{e:BEL}. Again, $\tu$ solves
\eqref{e:flowderivative}, but we estimate $\tu$ in $V_\gamma$.
Since $\alpha\geq 1+\alpha_0$, we can use Lemma~\ref{l:Bnostro} with $a=b=\alpha$
and $c=-\gamma$, together with interpolation of $V_\alpha$
between $V_\gamma$ and $V_{\gamma+1}$ and Young's inequality to get
\begin{align*}
  \frac{d}{dt}\|\tu\|_\gamma^2 + 2\nu\|\tu\|_{\gamma+1}^2
   &\leq    2\bigl|\chi_R'(\|u^\besp{R}_x\|_\alpha) \scal[V_\gamma]{\tu, B(u^\besp{R}_x,u^\besp{R}_x)}\bigr|\,\|\tu\|_\alpha\\
   &\quad + 2\chi_R(\|u^\besp{R}_x\|_\alpha)|\scal[V_\gamma]{\tu, B(\tu,u^\besp{R}_x)+B(u^\besp{R}_x,\tu)}|\\
   &\leq c R \|\tu\|_\alpha \|\tu\|_{\gamma+1}\\
   &\leq \nu\|\tu\|_{\gamma+1}^2 + c(\nu^{-(1+\alpha-\gamma)} R^2)^{\frac1{1+\gamma-\alpha}}\|\tu\|_\gamma^2,
\end{align*}
and \eqref{e:RSFlow} follows as in the previous theorem.

In the case $\alpha=\tfrac32$ we can choose $\epsilon\in(0,1-2\alpha_0)$ and use
Lemma~\ref{l:Bnostro} with $a=b=\tfrac\epsilon2$ and $c=-\gamma$, with the same
value $\gamma=2\alpha_0+\frac12$.
\end{proof}
\begin{remark}
The conclusions of the previous theorem imply that $(\semi_t^\besp{\alpha,R})_{t\geq0}$
extends to a semigroup on $V_\gamma$ (with a more careful estimate this can be
seen to be true also in the range of values for the parameters $\alpha$, $\alpha_0$
given in Proposition~\ref{p:RSFhigh}). We shall obtain a stronger result in
Section~\ref{s:critical}.
\end{remark}
\subsection{Short time coupling and weak--strong uniqueness}

We show in this section that it is possible to couple for a short time any
solution to the Navier--Stokes equations~\eqref{e:nse} to the unique solution
to~\eqref{e:strongR}, for suitable values of $\alpha$ and $R$. The length of
the short time is a stopping time whose size depends on the initial condition
and the strength of the noise (see Proposition~\ref{p:blowup}).

Given $\alpha\in(\tfrac12,1+2\alpha_0)$, $x\in V_\alpha$ and and an energy
martingale solution (see Definition~\ref{d:ems}) $\Pb_x$, consider the Wiener
process~\eqref{e:wiener} associated to $\Pb_x$ and the process $z$ solution
to~\eqref{e:stokes}. Equation~\eqref{e:strongRv} has a unique solution
$\Pb_x$--{a.~s.}, hence $u_x^\besp{\alpha,R}=z+v_x^\besp{\alpha,R}$ is well
defined and the unique (path-wise and in law) solution to~\eqref{e:strongR}
on the probability space $(\Ons,\field[B],\Pb_x)$ (in particular, it does
not depend in an essential way from $\Pb_x$).

To summarise, we have realised the solutions $(\xi_t)_{t\geq0}$ and
$(u_x^\besp{\alpha,R})_{t\geq0}$ to~\eqref{e:absnse} and~\eqref{e:strongR}
respectively (with the same noise) as stochastic processes on the probability
space $(\Ons,\field[B],\Pb_x)$. Define now
\begin{equation}\label{e:blowuptime}
  \tau_x^\besp{\alpha,R}(\omega)
    =\inf\{\,t\geq0\,:\,\|u_x^\besp{\alpha,R}(t)\|_\alpha\geq R\,\},
\end{equation}
if the above set is non-empty, and $\tau_x^\besp{\alpha,R}=\infty$ otherwise.
\begin{theorem}[Weak-strong uniqueness]\label{t:weakstrong}
Under {\nuno} in Assumption~\ref{a:mainass}, let $\alpha\in(\tfrac12,1 + 2\alpha_0)$
and $R\geq1$. Given $x\in V_\alpha$, let $\Pb_x$ be any energy martingale
solution starting at $x$ and let $(u_x^\besp{\alpha,R})_{t\geq0}$ be
the process solution to~\eqref{e:strongR} defined above on $(\Ons,\Pb_x)$. Then
\[
  (u_x^\besp{\alpha,R}(t) - \xi_t)\uno_{\{\tau_x^\besp{\alpha,R}\geq t\}} = 0,
    \qquad\Pb_x-\text{a.~s.}
\]
for every $t\geq0$. In particular,
\[
  \E^{\Pb^\besp{\alpha,R}_x}[\varphi(\xi_t)\uno_{\{\tau_x^\besp{\alpha,R}\geq t\}}]
    =\E^{\Pb_x}[\varphi(\xi_t)\uno_{\{\tau_x^\besp{\alpha,R}\geq t\}}],
\]
for every $t\geq0$ and every bounded continuous function $\varphi:H\to\R$, where
$\Pb_x^\besp{\alpha,R}$ is the distribution of $u_x^\besp{\alpha,R}$ on $\Ons$.
\end{theorem}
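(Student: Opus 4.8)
The plan is to reduce the statement to a \emph{pathwise}, essentially deterministic, weak--strong uniqueness estimate on the random interval $[0,\tau_x^\besp{\alpha,R})$, and then to close it by Gronwall. Since $\xi$ and $u_x^\besp{\alpha,R}=z+v_x^\besp{\alpha,R}$ are realised on $(\Ons,\field[B],\Pb_x)$ with the \emph{same} noise, they share the same Ornstein--Uhlenbeck process $z$ solving \eqref{e:stokes}. Writing $v=\xi-z$ and $w=v_x^\besp{\alpha,R}$, the process $v$ is only an energy martingale solution of \eqref{e:Veq}, whereas $w$ is, by Theorem~\ref{t:Rexistuniq}, the regular solution of the truncated problem \eqref{e:strongRv}; moreover, for $t<\tau_x^\besp{\alpha,R}$ one has $\|u_x^\besp{\alpha,R}(t)\|_\alpha<R$ by \eqref{e:blowuptime}, so the cutoff is inactive and $w$ in fact solves the genuine equation \eqref{e:Veq} on $[0,\tau_x^\besp{\alpha,R})$. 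Setting $\rho=v-w=\xi-u_x^\besp{\alpha,R}$, both $v$ and $w$ start at $x$, whence $\rho(0)=0$, and it suffices to prove $\rho\equiv0$ on $[0,\tau_x^\besp{\alpha,R})$, $\Pb_x$--a.s.

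The formal computation that drives everything is as follows. Subtracting the two equations and using bilinearity, $B(\xi,\xi)-B(u^\besp{R},u^\besp{R})=B(\xi,\rho)+B(\rho,u^\besp{R})$, where I abbreviate $u^\besp{R}=u_x^\besp{\alpha,R}$; pairing in $H$ with $\rho$ and using the cancellations $\scal[H]{B(\xi,\rho),\rho}=0$ and $\scal[H]{B(\rho,u^\besp{R}),\rho}=-\scal[H]{B(\rho,\rho),u^\besp{R}}$ leaves
\[
  \tfrac12\tfrac{d}{dt}\|\rho\|_H^2+\nu\|\rho\|_V^2=\scal[H]{B(\rho,\rho),u^\besp{R}}.
\]
The point is that the \emph{only} surviving nonlinear term is tested against the regular factor $u^\besp{R}\in V_\alpha$. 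By Lemma~\ref{l:Bnostro}, together with interpolation of the two $\rho$ factors between $H$ and $V$ and Young's inequality, and using decisively that $\alpha>\tfrac12$ forces the exponent $\tfrac52-\alpha$ of $\|\rho\|_V$ to be strictly below $2$,
\[
  |\scal[H]{B(\rho,\rho),u^\besp{R}}|\leq C\|\rho\|_H^{\alpha-\frac12}\|\rho\|_V^{\frac52-\alpha}\|u^\besp{R}\|_\alpha\leq \nu\|\rho\|_V^2+C\|u^\besp{R}\|_\alpha^{\frac{2}{\alpha-1/2}}\|\rho\|_H^2.
\]
On $[0,\tau_x^\besp{\alpha,R})$ we have $\|u^\besp{R}\|_\alpha<R$, hence $\tfrac{d}{dt}\|\rho\|_H^2\leq C R^{2/(\alpha-1/2)}\|\rho\|_H^2$, and Gronwall with $\rho(0)=0$ yields $\rho\equiv0$ up to $\tau_x^\besp{\alpha,R}$ and, by continuity, at $t=\tau_x^\besp{\alpha,R}$ as well.

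The hard part is to make this rigorous: $v$ is merely an energy solution, so it satisfies no energy \emph{equality} and has too little regularity to be tested against $\rho$. I would therefore follow the classical weak--strong (Serrin-type) scheme and never differentiate $\|\rho\|_H^2$ directly. Instead, on intervals $[s,t]$ with $s\notin T_{\Pb_x}$ (Definition~\ref{d:ems}), I would combine three ingredients: (i) the energy \emph{inequality} $\Energy_t(v,z)\leq\Energy_s(v,z)$ for $v$; (ii) the energy \emph{equality} for the regular solution $w$; and (iii) an identity for the cross term $\scal[H]{v_t,w_t}$, obtained by testing the weak equation \eqref{e:Veq} for $v$ against the admissible smooth field $w$ and the classical equation for $w$ against $v$, and integrating. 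Expanding $\|\rho_t\|_H^2=\|v_t\|_H^2-2\scal[H]{v_t,w_t}+\|w_t\|_H^2$ and collecting all nonlinear contributions --- including those involving $z$, which are finite thanks to the spatial regularity granted by {\nuno} and the constraint $\alpha<1+2\alpha_0$ that places $z$ in $V_\alpha$ --- reproduces the differential inequality above in integrated form. Letting $s\to0^+$ along good times (so that $\|\rho_s\|_H\to0$) and applying Gronwall then gives $\rho\equiv0$. I expect this bookkeeping --- turning the inequality for $v$ and the equality for $w$ into a single closed inequality for $\|\rho\|_H^2$ --- to be the main obstacle, since it is the genuine mechanism of weak--strong uniqueness, the energy computation of the second paragraph being only its shadow.

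Finally, the displayed equality of expectations is a routine transfer. Since $\Pb_x^\besp{\alpha,R}$ is the law of $u_x^\besp{\alpha,R}$ under $\Pb_x$ and the event $\{\tau_x^\besp{\alpha,R}\geq t\}$ is a measurable functional of that process through \eqref{e:blowuptime}, the pathwise identity $u_x^\besp{\alpha,R}(t)\uno_{\{\tau_x^\besp{\alpha,R}\geq t\}}=\xi_t\uno_{\{\tau_x^\besp{\alpha,R}\geq t\}}$ gives, for every bounded continuous $\varphi$, the equality $\E^{\Pb_x}[\varphi(u_x^\besp{\alpha,R}(t))\uno_{\{\tau_x^\besp{\alpha,R}\geq t\}}]=\E^{\Pb_x}[\varphi(\xi_t)\uno_{\{\tau_x^\besp{\alpha,R}\geq t\}}]$, and the left-hand side equals $\E^{\Pb_x^\besp{\alpha,R}}[\varphi(\xi_t)\uno_{\{\tau_x^\besp{\alpha,R}\geq t\}}]$ by the pushforward (change-of-variables) formula.
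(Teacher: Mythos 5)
Your proposal is correct and follows essentially the same route as the paper's proof: reduce to the difference of the shifted processes $v_R-v$ (same Ornstein--Uhlenbeck part $z$), note the cutoff is inactive before $\tau_x^\besp{\alpha,R}$, combine the energy \emph{inequality} for $v$, the energy \emph{equality} for $v_R$ and a cross-term balance for $\scal[H]{v_R,v}$, and close with the same Lemma~\ref{l:Bnostro}/interpolation/Gronwall estimate, the expectation identity being a pushforward formality. The one point to add is that your estimate invokes Lemma~\ref{l:Bnostro} with indices $(\alpha,\tfrac32-\alpha,0)$, which is admissible only for $\alpha<\tfrac32$; when $\alpha\geq\tfrac32$ (possible once $\alpha_0\geq\tfrac14$) one replaces $\alpha$ by an arbitrary $a<\tfrac32$, using $\|u_x^\besp{\alpha,R}\|_a\leq c\|u_x^\besp{\alpha,R}\|_\alpha\leq cR$, exactly as the paper remarks.
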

\begin{proof}
If $\Pb[\tau_x^\besp{\alpha,R}\geq t]=0$, there is nothing to prove, so we
assume that such probability is positive. For simplicity we shall write
$u_R = u_x^\besp{\alpha,R}$, $v_R$ the solution to~\eqref{e:strongRv}
corresponding to $u_R$ and $\tau = \tau_x^\besp{\alpha,R}$.

We know that $u_R(s) - \xi_s = v_R(s) - v(s)$, where $v$ is the solution
to~\eqref{e:Veq}, hence it is sufficient to show that $v_R(t) = v(t)$ on
$\{\tau_R\geq t\}$. By continuity (in $H$ for the weak topology for instance),
it is sufficient to show that $v_R(s) = v(s)$ holds for $s<\tau_R$.
If $s<\tau_R$, $\|u_R\|_\alpha\leq R$ and $\chi_R(\|u_R\|_\alpha)=1$, so 
we only need to prove that $v_R$ is the unique weak solution to~\eqref{e:Veq}
for $s<\tau_R$.

Set $\delta = v_R - v$, then $\delta$ satisfies
\[
  \partial_t\delta + \nu\Delta\delta + B(\delta,u_R) + B(\xi,\delta) = 0,
\]
for $s<\tau_R$. Moreover $\delta$ satisfies the following energy inequality
(with the same set of exceptional times corresponding to $v$),
\[
  \frac12\|\delta(s)\|_H^2
    + \nu\int_0^s\|\delta(r)\|_V^2\de[r]
    + \int_0^s\scal[H]{\delta,B(\delta,u_R)}\de[r]
    \leq 0.
\]
Indeed by definition $v$ satisfies an energy inequality (Definition~\ref{d:ems}),
while by Theorem~\ref{t:Rexistuniq} $v_R$ satisfies an energy equality, so we
are left with the proof of an energy balance for $\scal[H]{v_R,v}$. We postpone
this step to the end of the proof and we first show that $\delta(s)=0$ for all
$s<\tau_R$. To this end, we estimate the nonlinear term in the energy balance
for $\delta$. If $\alpha<\tfrac32$, Lemma~\ref{l:Bnostro}, (with $a=\alpha$,
$b=\tfrac32-\alpha$ and $c=0$) and interpolation yield
\[
  |\scal{\delta,b(\delta,u_r)}|
    \leq c\|\delta\|_V\|\delta\|_{\frac32-\alpha}\|u_R\|_\alpha
    \leq cR\|\delta\|_V^{\frac52-\alpha}\|\delta\|_H^{\alpha-\frac12}
    \leq \nu\|\delta\|_V^2 + c(\nu,R)\|\delta\|_H^2,
\]
and so $\delta(s)=0$ for $s<\tau_R$ by Gronwall's lemma. If $\alpha\geq\tfrac32$
one can proceed similarly using an arbitrary value of $a<\tfrac32$.

To conclude the proof, we need to show that
\begin{multline*}
  \scal[H]{v_R(t),v(t)}
       + 2\nu\int_s^t\scal[V]{v_R,v}\de[r] =\\
    = \scal[H]{v_R(s),v(s)}
       - \int_s^t\scal{v_R,B(u,u)}\de[r]
       - \int_s^t\chi_R(\|u_R\|_\alpha)\scal{B(u_R,u_R)}\de[r].
\end{multline*}
We proceed as in Romito~\cite[Theorem~2.2]{Rom10}. As in the proof of the energy
equality for $v_R$ (see Lemmas~\ref{l:Rweak} and~\ref{l:Rmild}), everything boils
down in proving that $\scal[H]{v_R(t),v(t)}$ is differentiable in time with
derivative $\scal{\dot v_R,v}+\scal{v_R,\dot v}$. First we notice that both the
equations for $v$ and $v_R$ are satisfied in $V'$. Moreover we see by the proof
of Lemmas~\ref{l:Rweak} and~\ref{l:Rmild} that $\dot v_R\in L^2_\loc(0,\infty;V')$,
hence $\scal[V',V]{\dot v_R,v}$ is well defined. On the other hand, since by
Corollary~\ref{c:Bnostro} (with $a=1$, $b=0$)
$B(v+z,v+z)\in L^2_\loc(0,\infty;V_{-\beta})$ for all $\beta>\tfrac32$
and either $v_R\in L^2_\loc(0,\infty;V_{\alpha+1})$ (in the range of values of
Lemma~\ref{l:Rweak}) or, by~\eqref{e:mildbound}, $v_R\in L^2_\loc(0,\infty;V_\beta')$
for all $\beta<\alpha+1$ (in the range of values of Lemma~\ref{l:Rmild}),
it turns out that $\scal[V_\beta,V_{-\beta}]{v_R,\dot v}$ is also well defined
and in conclusion $\scal[H]{v_R(t),v(t)}$ is differentiable. The balance above
then follows by the properties of the nonlinearity.
\end{proof}
\section{Critical regularity for the strong Feller property}\label{s:critical}

In the previous section we have proved that the transition semigroup associated
to any Markov solution has a regularising effect in strong topologies. Namely,
the semigroup computed on bounded measurable functions gives back almost
Lipschitz functions (see formula~\eqref{e:lip}). In this section we show
that the space where the regularity of the semigroup holds can be relaxed,
at the price of having continuity only. We remark that it may be possible
to achieve strong Feller regularity including the value $\alpha=\tfrac12$,
but this would require some more refined analytical method, which would make
the paper much lengthier.
\begin{theorem}\label{t:main}
Under Assumption~\ref{a:mainass}, let $(\semi_t)_{t\geq0}$ be the transition
semigroup associated to a Markov solution $(\Pb_x)_{x\in H}$.
Then $(\semi_t)_{t\geq0}$ is $V_\alpha$-strong Feller for every
$\alpha>\tfrac12$.
\end{theorem}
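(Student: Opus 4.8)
Let $\beta^{*}$ be any exponent in the window of Theorem~\ref{t:sf}, that is $\max\{1+\alpha_0,\tfrac12+2\alpha_0\}\leq\beta^{*}<1+2\alpha_0$ (with the left inequality strict when $\alpha_0=\tfrac12$); then the $V_{\beta^{*}}$-strong Feller property holds. The plan is to split the range $\alpha>\tfrac12$ at $\beta^{*}$. For $\alpha\geq\beta^{*}$ nothing new is needed: from $\|u\|_{\beta^{*}}\leq c\|u\|_\alpha$ the inclusion $V_\alpha\hookrightarrow V_{\beta^{*}}$ is continuous, so continuity of $\semi_t\phi$ on $V_{\beta^{*}}$ restricts to continuity on the stronger space $V_\alpha$. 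The genuine content lies in the weaker range $\tfrac12<\alpha<\beta^{*}$, where---as announced at the opening of the section---I would establish continuity only, not the quantitative bound~\eqref{e:lip}.

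Fix such an $\alpha$, a time $t>0$ and $\phi\in B_b(H)$. For $0<s<t$ the Markov property of Theorem~\ref{t:markov} gives $\semi_t\phi=\semi_s\psi$ with $\psi:=\semi_{t-s}\phi$, and the strong Feller property at level $\beta^{*}$ yields $\psi\in C_b(V_{\beta^{*}})$. The claim thus reduces to the continuity on $V_\alpha$ of the map $x\mapsto\E^{\Pb_x}[\psi(\xi_s)]$. Here I would call on the short-time coupling: for $x$ in a fixed ball of $V_\alpha$, fixing one $R\geq3(1+\|x\|_\alpha)$ valid throughout the ball, Theorem~\ref{t:weakstrong} identifies $\xi_s$ with the truncated solution $u_x^\besp{\alpha,R}(s)$ on $\{\tau_x^\besp{\alpha,R}>s\}$, so that $\E^{\Pb_x}[\psi(\xi_s)]$ differs from $\E\bigl[\psi(u_x^\besp{\alpha,R}(s))\bigr]$ by at most $2\|\phi\|_\infty\Pb_x[\tau_x^\besp{\alpha,R}\leq s]$. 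By Proposition~\ref{p:blowup} this error tends to $0$ as $s\to0$, uniformly on the ball; as the left-hand side $\semi_t\phi$ does not depend on the auxiliary parameter $s$, it is enough to prove, for each fixed small $s$, that $x\mapsto\E\bigl[\psi(u_x^\besp{\alpha,R}(s))\bigr]$ is continuous on $V_\alpha$.

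This remaining continuity I would draw from two features of the truncated problem~\eqref{e:strongR}, globally well posed for every $\alpha>\tfrac12$ by Theorem~\ref{t:Rexistuniq}: continuous dependence of $x\mapsto u_x^\besp{\alpha,R}$ in $V_\alpha$, and the instantaneous smoothing $u_x^\besp{\alpha,R}(s)\in V_{\beta^{*}}$ for $s>0$. The smoothing rests on the a~priori bound $\|u_x^\besp{\alpha,R}\|_\alpha\leq R$ enforced by the cut-off, on the regularity~{\nuno} of the noise (which places the Ornstein--Uhlenbeck process $z$ in $V_{\beta^{*}}$, consistently with $\beta^{*}<1+2\alpha_0$), and on a bootstrap of the mild form of~\eqref{e:Veq} through the bilinear estimate of Lemma~\ref{l:Bnostro}. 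Granting both, $x_n\to x$ in $V_\alpha$ forces $u_{x_n}^\besp{\alpha,R}(s)\to u_x^\besp{\alpha,R}(s)$ in $V_{\beta^{*}}$ in probability; continuity of $\psi$ on $V_{\beta^{*}}$ together with dominated convergence then give $\E[\psi(u_{x_n}^\besp{\alpha,R}(s))]\to\E[\psi(u_x^\besp{\alpha,R}(s))]$, which closes the argument.

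The step I expect to be the main obstacle is precisely this blend of continuous dependence in the near-critical topology $V_\alpha$, with $\alpha$ arbitrarily close to the Fujita--Kato threshold $\tfrac12$, \emph{together with} the gain of regularity into $V_{\beta^{*}}$: two data close in the weak norm $\|\cdot\|_\alpha$ must yield solutions close in the strictly stronger norm $\|\cdot\|_{\beta^{*}}$ at positive times. Each property alone is routine for the truncated equation, but propagating weak-norm closeness of the initial data into strong-norm closeness of the solutions---uniformly enough to pass to the limit---is the delicate point, and is where Lemma~\ref{l:Bnostro} and the noise regularity~{\nuno} have to be handled with care.
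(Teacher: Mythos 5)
Your reduction is essentially the paper's own: split off $\alpha\geq\beta^{*}$ by the embedding $V_\alpha\hookrightarrow V_{\beta^{*}}$, then for $\tfrac12<\alpha<\beta^{*}$ use the Markov property, the coupling of Theorem~\ref{t:weakstrong} and the blow-up estimate of Proposition~\ref{p:blowup} to reduce the claim to continuity of $x\mapsto\E\bigl[\psi(u_x^\besp{\alpha,R}(s))\bigr]$ on $V_\alpha$ with $\psi\in C_b(V_{\beta^{*}})$; this is precisely Proposition~\ref{p:critonestep}. The gap is in the step you yourself flag as the obstacle, and it is not merely delicate: as formulated it is a non sequitur. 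From ``continuous dependence in $V_\alpha$'' plus ``each solution lies in $V_{\beta^{*}}$ for $s>0$'' one cannot conclude $u_{x_n}^\besp{\alpha,R}(s)\to u_x^\besp{\alpha,R}(s)$ in $V_{\beta^{*}}$: weak-norm closeness together with individual (non-uniform in $n$) membership in the strong space gives nothing in the strong norm. To conclude one needs either a bound on the solutions in some $V_{\beta'}$, $\beta^{*}<\beta'<1+2\alpha_0$, \emph{uniform in $n$}, followed by interpolation, or a direct estimate of the difference $u_{x_n}^\besp{\alpha,R}-u_x^\besp{\alpha,R}$ in $V_{\beta^{*}}$; your write-up contains neither. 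Moreover the direct difference estimate has an intrinsic cap on the regularity gain: closing it via Lemma~\ref{l:Bnostro}/Corollary~\ref{c:Bnostro} requires the singular kernel exponent $\tfrac14(2\beta+5-4\alpha)<1$ in the mild regime and the Gronwall exponent $2(\beta-\alpha)\leq1$ in the energy regime, i.e.\ $\beta<\alpha+\bigl(\tfrac12\wedge(\alpha-\tfrac12)\bigr)$. This is exactly the hypothesis of the paper's Lemma~\ref{l:parabRreg}. Since $\beta^{*}\geq\max\{1+\alpha_0,\tfrac12+2\alpha_0\}$ while $\alpha$ may be arbitrarily close to $\tfrac12$, a single jump from $V_\alpha$ to $V_{\beta^{*}}$ is out of reach of these estimates.

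The idea your proposal is missing is the iteration that constitutes the core of the paper's argument: one proves the implication ``$V_\beta$-strong Feller $\Rightarrow$ $V_\alpha$-strong Feller'' only under the restriction $\alpha<\beta<\alpha+\bigl(\tfrac12\wedge(\alpha-\tfrac12)\bigr)$ (Proposition~\ref{p:critonestep}, resting on Lemma~\ref{l:parabRreg}), and then applies it finitely many times, descending from the window of Theorem~\ref{t:sf} to any prescribed $\alpha>\tfrac12$, each step gaining only a small amount of regularity but re-using the full strong Feller property of the Navier--Stokes semigroup established at the previous level. Without this iteration (or, alternatively, without a uniform-in-$n$ smoothing bound such as~\eqref{e:mildbound} valid in the whole parameter range plus an interpolation argument, neither of which you supply), the proposal does not close, so it does not constitute a proof.
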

The theorem follows from Theorem~\ref{t:sf} and Proposition~\ref{p:critonestep}
below, which contains the core idea. We first prove the following convergence
lemma on the approximated problem examined in Appendix~\ref{ss:approx}.
\begin{lemma}\label{l:parabRreg}
Assume {\nuno} (from Assumption~\ref{a:mainass}) and let
$\alpha\in(\tfrac12,1+2\alpha_0)$ and $\beta\in(\alpha,1+2\alpha_0)$ such that
$\beta<\alpha+(\tfrac12\wedge(\alpha-\tfrac12))$. If $x_n\to x$ in $V_\alpha$
and $R\geq1$, then $u_{x_n}^\besp{\alpha,R}(t)\to u_x^\besp{\alpha,R}(t)$
almost surely in $V_\beta$ for all $t>0$, where $u^\besp{\alpha,R}_y$ is
the solution to~\eqref{e:strongR} with initial condition $y$.
\end{lemma}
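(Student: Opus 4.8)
The plan is to exploit that the solutions $u_{x_n}^{\besp{\alpha,R}}$ and $u_x^{\besp{\alpha,R}}$ are all driven by the \emph{same} noise, hence by the same Ornstein--Uhlenbeck process $z$, so that the difference $w_n = u_{x_n}^{\besp{\alpha,R}} - u_x^{\besp{\alpha,R}} = v_{x_n}^{\besp{\alpha,R}} - v_x^{\besp{\alpha,R}}$ solves, for almost every fixed realization of the noise (equivalently, for a fixed continuous path $z(\cdot,\omega)$), the \emph{deterministic} equation
\[
  \partial_t w_n + \nu A w_n + N_R(u_{x_n}^{\besp{\alpha,R}}) - N_R(u_x^{\besp{\alpha,R}}) = 0,
  \qquad w_n(0) = x_n - x,
\]
where $N_R(u) = \chi_R(\|u\|_\alpha)B(u,u)$. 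The whole statement thus reduces to a deterministic continuous-dependence-with-smoothing result, proved pathwise; the randomness enters only through the fixed path $z$. I would derive all the estimates first on the Galerkin approximations of Appendix~\ref{ss:approx}, where $w_n$ is smooth enough to justify every manipulation, and then pass to the limit.

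First I would establish convergence at the level of the data, namely $w_n \to 0$ in $C([0,T];V_\alpha)$. Testing the difference equation against $A^\alpha w_n$ and splitting $N_R(u_{x_n}^{\besp{\alpha,R}}) - N_R(u_x^{\besp{\alpha,R}})$ into $\chi_R(\|u_{x_n}^{\besp{\alpha,R}}\|_\alpha)\bigl[B(w_n,u_{x_n}^{\besp{\alpha,R}}) + B(u_x^{\besp{\alpha,R}},w_n)\bigr]$ plus a term carrying the factor $\chi_R(\|u_{x_n}^{\besp{\alpha,R}}\|_\alpha)-\chi_R(\|u_x^{\besp{\alpha,R}}\|_\alpha)$, I would bound each piece with Lemma~\ref{l:Bnostro}, using that $\chi_R$ is Lipschitz and that its support forces a uniform bound $\|u_\cdot^{\besp{\alpha,R}}\|_\alpha \leq cR$ wherever the nonlinearity is active. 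Absorbing the top-order terms into $\nu\|w_n\|_{\alpha+1}^2$ by Young's inequality and closing with Gronwall's lemma gives $\|w_n(t)\|_\alpha^2 \leq \|x_n-x\|_\alpha^2\,\e^{c(\nu,R)t}$, whence $M := \sup_n\sup_{[0,T]}\|w_n\|_\alpha < \infty$ and $\sup_{[0,T]}\|w_n\|_\alpha \to 0$.

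Then I would upgrade the convergence to $V_\beta$ for $t>0$ through the mild formulation
\[
  w_n(t) = \e^{-\nu A t}(x_n - x) - \int_0^t \e^{-\nu A(t-s)} f_n(s)\de[s],
\]
with $f_n = N_R(u_{x_n}^{\besp{\alpha,R}}) - N_R(u_x^{\besp{\alpha,R}})$, together with the smoothing bound $\|A^\theta\e^{-\nu A\tau}\|_{H\to H}\leq c(\nu\tau)^{-\theta}$ of the analytic Stokes semigroup. For the first term $\|\e^{-\nu A t}(x_n-x)\|_\beta \leq c(\nu t)^{-(\beta-\alpha)/2}\|x_n-x\|_\alpha \to 0$ for every fixed $t>0$, since $\beta>\alpha$. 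For the Duhamel term I would bound $\|f_n(s)\|_{-\rho}\leq c(R,M)\|w_n(s)\|_\alpha$ by Lemma~\ref{l:Bnostro} (exactly as in the first step, the cut-off keeping the factors of $u_\cdot^{\besp{\alpha,R}}$ under control and the difference structure producing the factor $\|w_n\|_\alpha$), for a suitable $\rho$, so that
\[
  \Bigl\|\int_0^t \e^{-\nu A(t-s)} f_n(s)\de[s]\Bigr\|_\beta
    \leq c\int_0^t (\nu(t-s))^{-\frac{\beta+\rho}{2}}\|f_n(s)\|_{-\rho}\de[s]
    \leq c(R,M,\nu,t)\sup_{[0,T]}\|w_n\|_\alpha \to 0,
\]
the integral being finite precisely when $\beta+\rho<2$. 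The admissible window $\beta\in(\alpha,\alpha+(\tfrac12\wedge(\alpha-\tfrac12)))$ is exactly the set of $\beta$ for which an exponent $\rho$ can be chosen simultaneously large enough for the bilinear estimate of Lemma~\ref{l:Bnostro} to hold with both arguments at level $\alpha$, and small enough for the convolution kernel to remain integrable.

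I expect the main obstacle to be this balancing of exponents: controlling $f_n$ only by the norm $\|w_n\|_\alpha$ in which convergence is available forces $\rho$ to be comparatively large, and the competition between this lower bound on $\rho$ and the integrability requirement $\beta+\rho<2$ is what dictates the precise admissible range and caps the gain of regularity. A secondary care is the singularity of the kernel as $s\to t$ together with its behaviour near $s=0$, which is why I would keep the statement pointwise in $t>0$ (rather than uniform down to $t=0$) and why carrying out the computation first on the smooth Galerkin approximations of Appendix~\ref{ss:approx} is convenient.
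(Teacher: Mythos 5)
Your Step 2 --- the Duhamel bootstrap with the Stokes semigroup smoothing, the bound $\|f_n(s)\|_{-\rho}\leq c\|w_n(s)\|_\alpha$ with $\rho=\tfrac52-2\alpha$, and the balancing condition $\beta+\rho<2$ --- is essentially the paper's own argument for $\beta<\tfrac32$. The genuine gap is Step 1. Testing the difference equation against $A^\alpha w_n$ and absorbing into the dissipation $\nu\|w_n\|_{\alpha+1}^2$ forces the choice $c=-\alpha$ in Lemma~\ref{l:Bnostro}, hence the requirement $a+b\geq\tfrac32+\alpha$ with $a,b\geq\alpha$. The cut-off only controls the $V_\alpha$ norms of $u_n$ and $u$, and $2\alpha<\tfrac32+\alpha$ when $\alpha<\tfrac32$, so the deficit must be paid by higher regularity of $u_n$ or $u$. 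But all the regularity the paper's existence theory provides for these solutions is capped strictly below $1+2\alpha_0$ (the ceiling of $z$ and of the mild bound~\eqref{e:mildbound}), so the best one can reach is $a+b<2+4\alpha_0$, which meets $\tfrac32+\alpha$ only when $\alpha<\tfrac12+4\alpha_0$. Consequently, in the range $\alpha\in(\tfrac12+4\alpha_0,\,1+2\alpha_0)$ --- nonempty precisely when $\alpha_0<\tfrac14$, i.e.\ exactly the regime this paper is written for --- your energy inequality cannot be closed; worse, in that range the solutions are not known to lie in $L^2_\loc(V_{\alpha+1})$ at all (this is why existence splits into Lemma~\ref{l:Rweak}, energy methods, and Lemma~\ref{l:Rmild}, mild methods), so the dissipation term you absorb into need not even be finite. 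Even in the complementary range $\alpha<\tfrac12+4\alpha_0$, the cut-off difference term forces a Gronwall rate involving $\int\|v\|_{\alpha+1}^2\de[s]$, which is random, so your clean bound $\|w_n(t)\|_\alpha^2\leq\|x_n-x\|_\alpha^2\e^{c(\nu,R)t}$ is not correct as stated (harmless for almost sure convergence, but symptomatic of the method).

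The repair is cheap and is exactly what the paper does: remove Step 1 entirely. Since the $V_\gamma$ norms are monotone in $\gamma$ on the torus, bound $\|f_n(s)\|_{-\rho}\leq c_R\|w_n(s)\|_\alpha\leq c_R\|w_n(s)\|_\beta$ and read your Duhamel inequality as a closed loop in $\|w_n\|_\beta$,
\begin{equation*}
  \|w_n(t)\|_\beta
    \leq c\,t^{-\frac12(\beta-\alpha)}\|x_n-x\|_\alpha
       + c_R\bigl(1+t^{\frac12(\beta-\alpha)}\bigr)\int_0^t (t-s)^{-\frac{2\beta+5-4\alpha}{4}}\|w_n(s)\|_\beta\de[s],
\end{equation*}
then apply the weighted singular Gronwall lemma (Lemma~\ref{l:mildbound}) to get $\sup_{s\leq T}a_\beta(s)\|w_n(s)\|_\beta\leq c_{R,T}\|x_n-x\|_\alpha$ with no a priori convergence input; alternatively, keep your two-step structure but run Step 1 itself through the mild formulation at level $\alpha$, where the kernel exponent $\tfrac14(5-2\alpha)<1$ for every $\alpha>\tfrac12$. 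One further point of comparison: the paper treats $\beta>\tfrac32$ (possible only when $\alpha_0>\tfrac14$) by a separate energy argument, legitimate there since $2\beta\geq3$; your unified Duhamel estimate with $a=b=\alpha$ would cover that case as well, a small simplification over the paper --- but only once Step 1 is removed or repaired.
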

\begin{proof}
Denote for simplicity $u_n=u^\besp{\alpha,R}_{x_n}$ and $u=u^\besp{\alpha,R}_x$.
Let $z$ be the solution to the Stokes problem~\eqref{e:stokes} and set $v_n=u_n-z$,
$v=u-z$ and $w_n=u_n-u$, which solves the following equation,
\begin{multline*}
\dot w_n
  + \nu Aw_n
  + \chi_R(\|u_n\|_\alpha) B(u_n, w_n)
  + \chi_R(\|u\|_\alpha) B(w_n, u) + {}\\
  + \bigl(\chi_R(\|u_n\|_\alpha) - \chi_R(\|u\|_\alpha)\bigr) B(u_n,u)
  = 0.
\end{multline*}
Assume first that $\beta<\tfrac32$, then
\begin{multline*}
  \|w_n(t)\|_\beta
    \leq  \|\e^{-\nu At}w_n(0)\|_\beta
        + \int_0^t \Bigl(\chi_R(\|u_n\|_\alpha) \|\e^{-\nu A(t-s)} B(u_n, w_n)\|_\beta + {}\\
        + |\chi_R(\|u\|_\alpha) - \chi_R(\|u_n\|_\alpha)| \|\e^{-\nu A(t-s)} B(u_n,u)\|_\beta + {}\\
        + \chi_R(\|u\|_\alpha) \|\e^{-\nu A(t-s)} B(w_n, u)\|_\beta\Bigr)\de[s].
\end{multline*}
We use Corollary~\ref{c:Bnostro} (with $a=\alpha$, $b=\beta$ for the first
two terms in the integral and $a=b=\alpha$ for the third term) and
properties~\eqref{e:semiprop} and \eqref{e:chiprop} to get
\[
  \|w_n(t)\|_\beta
    \leq  ct^{-\frac12(\beta-\alpha)}\|x_n-x\|_\alpha
        + c_R \bigl(1+t^{\frac12(\beta-\alpha)}\bigr)\int_0^t (t-s)^{-\frac{2\beta+5-4\alpha}{4}}\|w_n(s)\|_\beta\de[s].
\]
Notice that the assumptions on $\beta$ ensure that
$\tfrac14(2\beta+5-4\alpha)<1$. Fix $T>0$ and let $a_\beta$ be the weight
function in Lemma~\ref{l:mildbound} (with $x=\tfrac12(\beta-\alpha)$ and
$y=\tfrac14(2\beta+5-4\alpha)$) so that
\[
  c_R \bigl(1 + T^{\frac12(\beta-\alpha)}\bigr) a_\beta(t)
        \int_0^t (t-s)^{-\frac14(2\beta+5-4\alpha)}a_\beta(s)^{-1}\de[s]
    \leq\frac12.
\]
With this choice, $\sup_{s\leq T}a_\beta(s)\|w_n(s)\|_\beta\leq c_{R,T}\|x_n-x\|_\alpha$
and so $\|w_n(t)\|_\beta\to0$ for $t>0$.

Consider now the case $\beta>\tfrac32$ (in particular this implies that
$\alpha$ is in the range of Lemma~\ref{l:Rweak}). The energy estimate,
Lemma~\ref{l:Bnostro} (with $a=b=\beta$, $c=-\beta$), formula~\eqref{e:chiprop}
and Young's inequality yield
\[
  \frac{d}{dt}\|w_n\|_\beta^2 
    \leq 
         c_R(1+\|z\|_\beta)^4(1 + \|v_n\|_{\alpha+1}^2
       + \|v\|_{\alpha+1}^2)^{2(\beta-\alpha)}\|w_n\|_\beta^2,
\]
since $\|u\|_\beta\leq \|z\|_\beta+(\|u\|_\alpha+\|z\|_\alpha)^{1+\alpha-\beta}\|v\|_{\alpha+1}^{\beta-\alpha}$
by interpolation of $V_\beta$ between $V_\alpha$ and $V_{\alpha+1}$ (similarly
for $u_n$). By assumption $2(\beta-\alpha)<1$, hence Gronwall's lemma
implies that for all $s\leq t$,
\[
  \|w_n(t)\|_\beta^2
    \leq \|w_n(s)\|_\beta^2\exp\Bigl(c_R\int_s^t (1+\|z\|_\beta)^4(1 + \|v_n\|_{\alpha+1}^2 + \|v\|_{\alpha+1}^2)^{2(\beta-\alpha)}\de[r]\Bigr).
\]
By integrating for $s\in[0,\tfrac{t}{2}]$, we get
\[
  \|w_n(t)\|_\beta^2
    \leq \frac2{t}\Bigl(\int_0^t \|w_n(s)\|_\beta^2\Bigr)
           \exp\Bigl(c_R\int_0^t (1+\|z\|_\beta)^4(1 + \|v_n\|_{\alpha+1}^2 + \|v\|_{\alpha+1}^2)^{2(\beta-\alpha)}\Bigr).
\]
The exponential term is uniformly bounded in $n$ (using inequality%
~\eqref{e:gt32}), so we only need to show that the first integral on
the right hand side converges to zero. If $\beta\leq\alpha+\tfrac14$
the result follows by applying inequality \eqref{e:uniqgt32} to
$w_n = v_n-v$. On the other hand, if $\beta>\alpha+\frac14$, interpolation
(between $V_{\alpha+\frac14}$ and $V_{\alpha+1}$) ensures convergence
since, as above, $\int\|w_n\|^2_{\alpha+1/4}\to0$ and $w_n$ is
bounded uniformly in $n$ in $L^2(0,t;V_{\alpha+1})$ (this can be proved
using~\eqref{e:gt32} on both $v_n$ and $v$).

Finally, if $\beta=\tfrac32$, one can consider a slightly larger value
$\beta'>\beta$ which satisfies the same assumptions of $\beta$ and apply the
computations above.
\end{proof}
\begin{proposition}\label{p:critonestep}
Assume {\nuno} of Assumption~\ref{a:mainass} and let $(\semi_t)_{t\geq0}$ be
the transition semigroup associated to a Markov solution $(\Pb_x)_{x\in H}$
to~\eqref{e:nse}. If $\alpha\in(\tfrac12,1+2\alpha_0)$ and there is a number
$\beta\in(\alpha,1+2\alpha_0)$ such that $(\semi_t)_{t\geq0}$ is $V_\beta$-strong
Feller, then $(\semi_t)_{t\geq0}$ is $V_\alpha$-strong Feller.
\end{proposition}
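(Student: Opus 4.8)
The plan is to prove sequential continuity of $x\mapsto\semi_t\varphi(x)$ on $V_\alpha$ for an arbitrary bounded measurable $\varphi$, using the already available strong Feller property at the finer level $\beta$ together with the short-time coupling of Theorem~\ref{t:weakstrong}. Fix $t>0$ and a sequence $x_n\to x$ in $V_\alpha$; since $\sup_n\|x_n\|_\alpha<\infty$, I would fix once and for all an $R\geq3(1+\sup_n\|x_n\|_\alpha)$ and a small $\epsilon>0$ with $\epsilon\leq\tfrac12(t\wedge1)$ and $\epsilon$ in the range where Proposition~\ref{p:blowup} applies. By the Markov property I write $\semi_t\varphi=\semi_\epsilon\psi$ with $\psi=\semi_{t-\epsilon}\varphi$; the assumed $V_\beta$-strong Feller property at the positive time $t-\epsilon$ gives $\psi\in C_b(V_\beta)$, and trivially $\|\psi\|_\infty\leq\|\varphi\|_\infty$.

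First I would replace the genuine solution by the truncated one. On $\{\tau_{x_n}^\besp{\alpha,R}\geq\epsilon\}$ weak-strong uniqueness yields $\xi_\epsilon=u_{x_n}^\besp{\alpha,R}(\epsilon)$ pathwise, whence $\psi(\xi_\epsilon)\uno_{\{\tau\geq\epsilon\}}=\psi(u_{x_n}^\besp{\alpha,R}(\epsilon))\uno_{\{\tau\geq\epsilon\}}$ almost surely (only measurability of $\psi$ is needed here, not continuity on $H$). Writing $\uno_{\{\tau\geq\epsilon\}}=1-\uno_{\{\tau<\epsilon\}}$ and bounding the two resulting error terms by $\|\varphi\|_\infty\,\Pb[\tau_{x_n}^\besp{\alpha,R}<\epsilon]$, Proposition~\ref{p:blowup} gives $|\semi_\epsilon\psi(x_n)-\E[\psi(u_{x_n}^\besp{\alpha,R}(\epsilon))]|\leq 2c\|\varphi\|_\infty\e^{-cR^2/\epsilon}$, and the same estimate with $x$ in place of $x_n$. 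Hence
\[
  |\semi_t\varphi(x_n)-\semi_t\varphi(x)|
    \leq\bigl|\E[\psi(u_{x_n}^\besp{\alpha,R}(\epsilon))]-\E[\psi(u_x^\besp{\alpha,R}(\epsilon))]\bigr|
      +4c\|\varphi\|_\infty\e^{-cR^2/\epsilon}.
\]

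Next I would send $n\to\infty$ with $R$ and $\epsilon$ frozen. By Lemma~\ref{l:parabRreg} the truncated flows converge, $u_{x_n}^\besp{\alpha,R}(\epsilon)\to u_x^\besp{\alpha,R}(\epsilon)$ almost surely in $V_\beta$; since $\psi\in C_b(V_\beta)$, dominated convergence yields $\E[\psi(u_{x_n}^\besp{\alpha,R}(\epsilon))]\to\E[\psi(u_x^\besp{\alpha,R}(\epsilon))]$, so the first term on the right vanishes and
\[
  \limsup_{n}|\semi_t\varphi(x_n)-\semi_t\varphi(x)|\leq 4c\|\varphi\|_\infty\e^{-cR^2/\epsilon}.
\]
The left-hand side does not depend on $R$, so I would finally let $R\to\infty$ along an admissible scaling $\epsilon\asymp R^{-\gamma}$, which drives the right-hand side to $0$ and proves $\semi_t\varphi(x_n)\to\semi_t\varphi(x)$.

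The hard part is that the convergence in Lemma~\ref{l:parabRreg} takes place in $V_\beta$ only under the restriction $\beta-\alpha<\tfrac12\wedge(\alpha-\tfrac12)$, since the exponent controlling the Duhamel kernel there is tied to the truncation level $\alpha$, whereas the hypothesis permits any $\beta\in(\alpha,1+2\alpha_0)$. When the gap $\beta-\alpha$ is small the argument above applies verbatim (with $\psi$ continuous in exactly the topology where the flows converge). For a general $\beta$ I would interpolate a finite chain $\alpha=\beta_N<\beta_{N-1}<\dots<\beta_0=\beta$ with each gap $\beta_{i-1}-\beta_i<\tfrac12\wedge(\beta_i-\tfrac12)$, and run the one-step argument successively: at the $i$-th stage I prove $V_{\beta_i}$-strong Feller from $V_{\beta_{i-1}}$-strong Feller, coupling and truncating at the level $\beta_i$ (so the initial data sit in $V_{\beta_i}$), taking $\psi\in C_b(V_{\beta_{i-1}})$, and invoking Lemma~\ref{l:parabRreg} with initial regularity $\beta_i$ and target $\beta_{i-1}$. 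After finitely many such steps the level $\alpha$ is reached, which completes the proof.
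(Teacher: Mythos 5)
Your proof is correct and follows essentially the same route as the paper's: the Markov property at a small time $\epsilon$, replacement of $\xi_\epsilon$ by the truncated solution via Theorem~\ref{t:weakstrong} with the error controlled through Proposition~\ref{p:blowup}, almost sure convergence of the truncated flows from Lemma~\ref{l:parabRreg} combined with dominated convergence against $\psi\in C_b(V_\beta)$, and finally the finite chain of intermediate exponents to remove the gap restriction (which the paper dispatches in one sentence as ``iterating the argument''). The only detail to add is that $\epsilon$ must also be chosen outside the Lebesgue-null sets of exceptional times where the almost-everywhere Markov property of Theorem~\ref{t:markov} and the energy inequality may fail; since admissible values of $\epsilon$ form a set of full measure in the required interval, this is immediate.
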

\begin{proof}
It is sufficient to show the theorem under the condition
$\beta<\alpha + (\tfrac12\wedge(\alpha-\tfrac12))$. The general case
follows by iterating the argument.

Let $x_n\to x$ in $V_\alpha$. Choose $R\geq 1+4\sup_n\|x_n\|_\alpha$
and $\epsilon_0\leq c'R^{-\gamma}$, where $c'$, $\gamma$, $\eta$ are
the values given in Proposition~\ref{p:blowup}. With such values, we
know that, by Proposition~\ref{p:blowup},
\[
  \{\sup_{t\in[0,\epsilon_0]}\|z(t)\|_\eta\leq\frac{R}3\}
    \subset A_\epsilon
    =       \{\tau_x^\besp{\alpha,R}\geq\epsilon\}\cap\bigcap_{n\in\N}\{\tau_{x_n}^\besp{\alpha,R}\geq\epsilon\}
\]
for every $\epsilon\leq\epsilon_0$, where $\tau^\besp{\alpha,R}$ is
defined in~\eqref{e:blowuptime}. Notice that for any $\varphi\in B_b(H)$
and $\epsilon\leq\epsilon_0$ (so that it does not belong to any of the
exceptional sets of $\Pb_{x_n}$, $\Pb_x$), by the Markov property and
Theorem~\ref{t:weakstrong},
\[
\begin{aligned}
  \semi_t\varphi(y)
  & =  \E^{\Pb_y}[\semi_{t-\epsilon}\varphi(\xi_\epsilon)\uno_{A_\epsilon}]
     + \E^{\Pb_y}[\semi_{t-\epsilon}\varphi(\xi_\epsilon)\uno_{A_\epsilon^c}]\\
  & =  \semi^\besp{\alpha,R}_\epsilon(\semi_{t-\epsilon}\varphi)(y)
     + \E^{\Pb_y}[\semi_{t-\epsilon}\varphi(\xi_\epsilon)\uno_{A_\epsilon^c}]
     - \E^{\Pb_y^\besp{\alpha,R}}[\semi_{t-\epsilon}\varphi(\xi_\epsilon)\uno_{A_\epsilon^c}],
\end{aligned}
\]
with $y=x_n$ or $y=x$, where $(\semi^\besp{\alpha,R}_t)_{t\geq0}$ is
the transition semigroup associated to problem~\eqref{e:strongR}. Since
by Lemma~\ref{l:Ztails} the term
\begin{multline*}
  |o_{\epsilon,R}(y)|
     = \bigl|\E^{\Pb_y}[\semi_{t-\epsilon}\varphi(\xi_\epsilon)\uno_{A_\epsilon^c}]
       - \E^{\Pb_y^\besp{\alpha,R}}[\semi_{t-\epsilon}\varphi(\xi_\epsilon)\uno_{A_\epsilon^c}]\bigr|
    \leq\\
    \leq 2\|\varphi\|_\infty\Pb_y^\besp{\alpha,R}[A_\epsilon^c]
    \leq 2\|\varphi\|_\infty\Pb_y^\besp{\alpha,R}[\sup_{t\leq\epsilon_0}\|z(t)\|_\eta\geq\frac{R}{3}]
    \leq c\|\varphi\|_\infty\e^{-a_0\frac{R^2}{\epsilon_0}}
\end{multline*}
converges to $0$ as $\epsilon_0\to0$ uniformly in $n$, we have that
\[
\semi_t\varphi(x_n) - \semi_t\varphi(x)
 =   \semi_\epsilon^\besp{\alpha,R}(\semi_{t-\epsilon}\varphi)(x_n) - \semi_\epsilon^\besp{\alpha,R}(\semi_{t-\epsilon}\varphi)(x)
   + o_{\epsilon,R}(x_n) - o_{\epsilon,R}(x).
\]
By assumptions, $\semi_{t-\epsilon}\varphi\in C_b(V_\beta)$, and by
Lemma~\ref{l:parabRreg} $u_{x_n}^\besp{\alpha,R}(\epsilon)\to u_{x_n}^\besp{\alpha,R}(\epsilon)$
almost surely, where $u_y^\besp{\alpha,R}$ is the solution to~\eqref{e:strongR}
with initial condition $y$. By Lebesgue theorem
$\semi_\epsilon^\besp{\alpha,R}(\semi_{t-\epsilon}\varphi)(x_n)\to \semi_\epsilon^\besp{\alpha,R}(\semi_{t-\epsilon}\varphi)(x)$
as $n\to\infty$, and, in the limit as $\epsilon_0\to0$, we have that
$\semi_t\varphi(x_n)\to \semi_t\varphi(x)$.
\end{proof}
\subsection{A few consequences}\label{ss:consequence}

As a preliminary result we show that under {\ndue} (see Assumption~\ref{a:mainass})
each Markov solutions has Markov kernels supported on the whole state space.
We follow the lines of~\cite{Fla97}. For stronger results on the same lines
we refer to \cite{Rom04,Rom05,AgrSar05,Shi06,AgrKukSarShi07,RomXu09}.
\begin{lemma}\label{l:control}
Under {\ndue} consider a Markov solution $(\Pb_x)_{x\in H}$. Then for every
$\tfrac12<\alpha<1+2\alpha_0$, every $x\in V_\alpha$, every $t>0$ and every open
set $U\subset V_\alpha$, $P(t,x,U)>0$, where $P(\cdot,\cdot,\cdot)$ is the Markov
kernel associated to the given Markov solution.
\end{lemma}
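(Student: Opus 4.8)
The plan is to transfer positivity from the well-posed cut-off equation~\eqref{e:strongR} and to establish the latter by an approximate-controllability and support argument, following Flandoli~\cite{Fla97}. Fix $\tfrac12<\alpha<1+2\alpha_0$, $x\in V_\alpha$, $t>0$ and an open set $U\subset V_\alpha$. First I would choose $R$ large (say $R\geq 1+2\|x\|_\alpha$, and larger than the target path built below) and use weak--strong uniqueness: on $\{\tau_x^\besp{\alpha,R}\geq t\}$ one has $\xi_t=u_x^\besp{\alpha,R}(t)$ by Theorem~\ref{t:weakstrong}, whence
\[
  P(t,x,U)=\Pb_x[\xi_t\in U]
    \geq \Pb_x\bigl[u_x^\besp{\alpha,R}(t)\in U,\ \tau_x^\besp{\alpha,R}\geq t\bigr].
\]
It therefore suffices to make the right-hand side strictly positive. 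The gain is that $u_x^\besp{\alpha,R}$ is the pathwise-unique solution of~\eqref{e:strongR}, hence a continuous function of the driving Ornstein--Uhlenbeck process $z$ of~\eqref{e:stokes}; I would use continuity of the solution map $z\mapsto u_x^\besp{\alpha,R}$ from $C([0,t];V_\eta)$ to $C([0,t];V_\alpha)$ (with $\eta$ as in Proposition~\ref{p:blowup}), which is of the same nature as the continuous dependence established in Lemma~\ref{l:parabRreg}.

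Next I would prove approximate controllability of the deterministic system $\dot{\bar u}+\nu A\bar u+B(\bar u,\bar u)=g$, $\bar u(0)=x$, in two phases. On a short interval $[0,t_0]$ I take $g=0$: since $\alpha>\tfrac12$ is subcritical, the Fujita--Kato theory underlying Theorem~\ref{t:Rexistuniq} gives a local smooth solution, so for $t_0$ small $\bar u$ stays strictly below $R$ in $V_\alpha$ and $\bar u(t_0)\in D(A^m)$ for every $m$. On $[t_0,t]$, starting from the smooth state $\bar u(t_0)$, I build a smooth path ending at a smooth point $y\in U$ (smooth fields are dense and $U$ is open in $V_\alpha$), keeping $\|\bar u\|_\alpha<R$, and set $g=\dot{\bar u}+\nu A\bar u+B(\bar u,\bar u)$. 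Because this phase is smooth, $g\in L^2([t_0,t];V_{3/2+2\alpha_0})$ with no singularity; by~{\ndue} this is exactly admissibility, $\cov^{-\frac12}g\in L^2([0,t];H)$, so $\bar z(s)=\int_0^s\e^{-\nu A(s-r)}g(r)\de[r]$ is a Cameron--Martin element of the law of $z$ (identically $0$ on $[0,t_0]$, smooth afterwards), and solving~\eqref{e:strongR} with $z$ replaced by $\bar z$ returns $\bar u$.

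Finally I would invoke the support of the Gaussian law of $z$: under~{\ndue} the reproducing kernel is non-degenerate, so the topological support of $z$ in $C([0,t];V_\eta)$ equals the closure of its Cameron--Martin space and in particular contains $\bar z$; hence every neighbourhood of $\bar z$ has positive $\Pb_x$-probability. Since $u_x^\besp{\alpha,R}$ depends continuously on $z$ and equals $\bar u$ at $z=\bar z$, and since $\bar u(t)=y\in U$ with $\sup_{[0,t]}\|\bar u\|_\alpha<R$, for $z$ in a sufficiently small neighbourhood of $\bar z$ we simultaneously have $u_x^\besp{\alpha,R}(t)\in U$ and $\tau_x^\besp{\alpha,R}\geq t$; this event has positive probability, which proves the claim. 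I expect the controllability step to be the main obstacle, specifically the interplay between the admissibility requirement forced by~{\ndue} (the control must be valued in $V_{3/2+2\alpha_0}$) and the low regularity $x\in V_\alpha$ with $\alpha<1+2\alpha_0$: a forcing that fought the Stokes evolution from the rough datum would be too singular at $s=0$ to be admissible, which is precisely why the first, uncontrolled, parabolic-smoothing phase is inserted before steering begins.
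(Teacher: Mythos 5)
Your proposal is correct and takes essentially the same route as the paper: the paper likewise bounds $P(t,x,U)$ from below by the probability that the cut-off solution reaches $U$ before the stopping time $\tau_x^{\besp{\alpha,R}}$ (via Theorem~\ref{t:weakstrong}), and then solves the resulting control/support problem exactly as in Flandoli's irreducibility argument \cite{Fla97} and Lemmas~C.2--C.3 of \cite{FlaRom08}, which is precisely the two-phase smoothing-then-steering construction with an admissible Cameron--Martin control that you spell out. The only differences are cosmetic: the paper first reduces, without loss of generality, to $\alpha>2\alpha_0$ and defers the controllability details to those references, whereas you make them explicit.
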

\begin{proof}
Without loss of generality, we can assume $\alpha>2\alpha_0$. We proceed as in
\cite[Proposition~6.1]{FlaRom08}: we need to show that
$\Pb_x[\|\xi_t-y\|_\alpha<\epsilon]>0$ for all $t>0$, $x,y\in V_\alpha$. This
probability is bounded from below by 
$\Pb_x^\besp{\alpha,R}[\|\xi_t-y\|_\alpha<\epsilon,\tau^\besp{\alpha,R}>t]$,
hence it is sufficient to show that this last quantity is positive. This follows
by solving a control problem as in Lemmas~C.2,~C.3 of \cite{FlaRom08}. 
\end{proof}
\begin{corollary}
Under Assumption~\ref{a:mainass}, every Markov solution $(\Pb_x)_{x\in H}$
to~\eqref{e:nse} admits a unique invariant measure, which is strongly mixing.
Moreover, the convergence to the invariant measure is exponentially fast.

Finally, if $(\Pb_x^1)_{x\in H}$ and $(\Pb_x^2)_{x\in H}$ are different
Markov solutions, then the corresponding Markov kernels $P_1(t,x,\cdot)$
and $P_2(t,x,\cdot)$ are equivalent measures for all $x\in V_\alpha$ and
$\alpha>\tfrac12$. Equivalence holds also for the corresponding invariant
measures.
\end{corollary}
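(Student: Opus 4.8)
The plan is to combine the two structural results just established — the $V_\alpha$-strong Feller property (Theorem~\ref{t:main}) and the irreducibility of the Markov kernels on $V_\alpha$ (Lemma~\ref{l:control}) — through a classical Doob-type argument, and then to upgrade qualitative mixing to an exponential rate using the dissipative structure of the equation.

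First I fix some $\alpha\in(\tfrac12,1)$ and work on the state space $V_\alpha$, on which $\xi_t$ lives almost surely for every $t>0$ (by the regularising effect of the dynamics and of the Ornstein--Uhlenbeck process). Existence of an invariant measure $\mu$, concentrated on $V_\alpha$, follows from a Krylov--Bogoliubov argument: the energy balance of Definition~\ref{d:ems} yields a uniform bound on the time averages of $\E^{\Pb_x}\|\xi_s\|_V^2$, and since $V=V_1$ embeds compactly into $V_\alpha$ for $\alpha<1$, the averaged laws are tight. By Theorem~\ref{t:main} the semigroup is $V_\alpha$-strong Feller, and by Lemma~\ref{l:control} it is irreducible, that is $P(t,x,U)>0$ for every nonempty open $U\subset V_\alpha$. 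Doob's theorem then gives at once that $\mu$ is the unique invariant measure, that $\mu$ and $P(t,x,\cdot)$ are equivalent for every $t>0$ and $x\in V_\alpha$, and that $(\semi_t)_{t\geq0}$ is strongly mixing.

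To obtain the exponential rate I would invoke a Harris-type theorem. The quantitative gradient bound~\eqref{e:lip}, together with irreducibility, provides by a standard compactness argument a local Doeblin (minorization) estimate: for a suitable $t_0>0$ and every $\rho>0$ there are $\delta>0$ and a probability $\nu$ with $P(t_0,x,\cdot)\geq\delta\,\nu$ uniformly for $x$ in the $\rho$-ball. On the other hand the energy dissipation furnishes a Lyapunov function $L(x)=1+\|x\|_H^2$ satisfying $\semi_t L\leq\theta^t L+C$ with $\theta<1$, so that its sublevel sets are small in the sense of Meyn--Tweedie. The Harris theorem then yields a spectral gap in the associated weighted total-variation norm, hence exponential convergence to $\mu$. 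I expect this to be the main obstacle: extracting the uniform minorization on sublevel sets from~\eqref{e:lip}, and matching it with the Lyapunov estimate, are the points requiring care.

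For the last assertion, let $(\Pb_x^1)_{x\in H}$ and $(\Pb_x^2)_{x\in H}$ be two Markov solutions with kernels $P_1,P_2$ and invariant measures $\mu_1,\mu_2$. By Doob's equivalence, $P_i(t,x,\cdot)\sim\mu_i$ for all $t>0$ and $x\in V_\alpha$, so it suffices to prove $\mu_1\sim\mu_2$, and for this it is enough to exhibit a single admissible pair $(s,x)$ with $P_1(s,x,\cdot)\sim P_2(s,x,\cdot)$. Here the short-time coupling is decisive: for an admissible pair $(s,R)$ (with $s\leq c'R^{-\gamma}$ as in Proposition~\ref{p:blowup}) and every Borel set $A$,
\[
  P_i(s,x,A)=\E^{\Pb_x^i}[\uno_A(\xi_s)\uno_{\{\tau_x^\besp{\alpha,R}\geq s\}}]+\E^{\Pb_x^i}[\uno_A(\xi_s)\uno_{\{\tau_x^\besp{\alpha,R}<s\}}].
\]
By weak--strong uniqueness (Theorem~\ref{t:weakstrong}) the first summand equals $\E^{\Pb_x^\besp{\alpha,R}}[\uno_A(\xi_s)\uno_{\{\tau_x^\besp{\alpha,R}\geq s\}}]=:\lambda_{s,x}^R(A)$, a measure common to $i=1,2$ which, by Lemma~\ref{l:control} applied to the regularised flow, is absolutely continuous and of full support in $V_\alpha$; the second summand is a remainder of mass at most $\Pb_x^\besp{\alpha,R}[\tau_x^\besp{\alpha,R}<s]\leq c\,\e^{-aR^2/s}$. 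Thus $P_1$ and $P_2$ share the dominant absolutely continuous core $\lambda_{s,x}^R$. Propagating this core through the Markov property over the blow-up event — so as to circumvent the constraint $s\leq c'R^{-\gamma}$, which forbids sending $R\to\infty$ at fixed $s$ — upgrades the shared core to mutual absolute continuity $P_1(s,x,\cdot)\sim P_2(s,x,\cdot)$; this is the technical heart of the argument. Combined with $P_i(t,x,\cdot)\sim\mu_i$ this yields $\mu_1\sim\mu_2$ and, again through $P_i(t,x,\cdot)\sim\mu_i$, the equivalence of $P_1(t,x,\cdot)$ and $P_2(t,x,\cdot)$ for every $t>0$, $x\in V_\alpha$ and $\alpha>\tfrac12$.
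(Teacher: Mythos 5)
Your first paragraph is sound and coincides with the paper's own route: the paper proves unique ergodicity and strong mixing precisely by feeding Lemma~\ref{l:control} and the strong Feller property into Doob's theorem (citing \cite{DapZab92}), with existence of the invariant measure imported from \cite{Rom08}; your Krylov--Bogoliubov argument is an acceptable substitute for that import. Note that the paper's proof of the remaining two assertions is purely by citation (exponential convergence ``as in \cite[Theorem~3.3]{Rom08}'', equivalence ``as in \cite[Theorem~4.1]{FlaRom07}''), so what you are really attempting is a reconstruction of those references — and both reconstructions have genuine gaps, which moreover trace to the same missing ingredient: quantitative regularization from $H$ into $V_\alpha$.

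For the exponential rate, your Harris scheme pairs the Lyapunov function $L(x)=1+\|x\|_H^2$ with a minorization on balls of $V_\alpha$. But the sublevel sets of $L$ are balls of $H$, which are unbounded in $V_\alpha$; the strong Feller property and irreducibility live on $V_\alpha$ and cannot make an $H$-ball a small set, so the Lyapunov estimate and the minorization never meet and the Harris theorem does not close. The missing step is a moment bound of the type $\E^{\Pb_x}\|\xi_1\|_\alpha^\eta\leq C(1+\|x\|_H^2)$ (this is the role of Theorem~A.2 of \cite{Rom08}, on which the paper's citation rests), which forces the dynamics back into a $V_\alpha$-ball, a true small set, with uniform probability from each $H$-sublevel set; you flag this mismatch as ``the main obstacle'' but flagging it is not resolving it. For the equivalence of kernels, the step you call ``the technical heart'' is exactly the step you omit, and the claim supporting it is false: on $\{\tau_x^\besp{\alpha,R}\geq s\}$ one has $\|\xi_s\|_\alpha\leq R$, so $\lambda^R_{s,x}$ is carried by the closed $R$-ball of $V_\alpha$ and is nowhere near full support (and ``absolutely continuous'' has no referent here). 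The propagation that actually works, and is what \cite[Theorem~4.1]{FlaRom07} does, is: if $P_1(t,x,A)=0$, Doob's within-selection equivalence gives $P_1(s,y,A)=0$ for \emph{all} $s>0$ and $y\in V_\alpha$; then by invariance of $\mu_2$ and the common core,
\[
  \mu_2(A)=\int P_2(s,y,A)\,\mu_2(\de[y])
    \leq \int_{\{\|y\|_\alpha\leq R/3\}}\bigl(\lambda^R_{s,y}(A)+c_0\e^{-a_0R^2/9s}\bigr)\mu_2(\de[y])
      +\mu_2\bigl(\|y\|_\alpha>\tfrac{R}{3}\bigr),
\]
where $\lambda^R_{s,y}(A)\leq P_1(s,y,A)=0$; taking $s=c'R^{-\gamma}$ and $R\to\infty$ kills the exponential term, while the last term vanishes only because $\mu_2(V_\alpha)=1$ — again a consequence of the moment bounds of \cite{Rom08} that your proposal never invokes. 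This yields $\mu_1\sim\mu_2$ by symmetry, and then equivalence of all kernels follows from $P_i(t,x,\cdot)\sim\mu_i$ as you say; without the within-selection equivalence applied at \emph{all} starting points and without $\mu_i(V_\alpha)=1$, sharing a common core of positive mass does not imply equivalence.
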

\begin{proof}
Given the above lemma, unique ergodicity is a consequence of strong Feller
regularity and Doob's theorem (see~\cite{DapZab92}). This extends
\cite[Corollary~3.2]{Rom08}. Exponential convergence is an extension of
\cite[Theorem~3.3]{Rom08} and follows with similar methods.
Finally, equivalence of laws follows as in~\cite[Theorem~4.1]{FlaRom07}.
\end{proof}
We finally give a generalisation of Theorem~6.7 of~\cite{FlaRom08}.
\begin{proposition}
Under Assumption~\ref{a:mainass}, let $(\Pb_x)_{x\in H}$ be a Markov solution
to~\ref{e:nse}. Then for any $\alpha>\tfrac12$, $(\Pb_x)_{x\in V_\alpha}$ is
a Markov family.
\end{proposition}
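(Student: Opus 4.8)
The goal is to upgrade the almost-sure Markov property of Theorem~\ref{t:markov} to the genuine Markov property, valid for \emph{every} time $s\geq0$, once the initial datum is taken in $V_\alpha$; this generalises \cite[Theorem~6.7]{FlaRom08}, whose strategy I would follow. The single new input is that, by Theorem~\ref{t:main}, the transition semigroup $(\semi_t)_{t\geq0}$ now maps $B_b(H)$ into $C_b(V_\beta)$ for \emph{every} $\beta>\tfrac12$, so the argument runs in the full range of exponents; in particular the same theorem gives the continuity, hence measurability, of $x\mapsto\semi_t\phi(x)$ on $V_\alpha$. It is moreover enough to treat $\alpha\in(\tfrac12,1]$: for $\alpha>1$ one has $V_\alpha\subset V_1$, so that $(\Pb_x)_{x\in V_\alpha}$ is merely the restriction to a smaller set of initial data of the family $(\Pb_x)_{x\in V_1}$, and is automatically Markov once the latter is. For $\alpha\leq1$ we have $V_\alpha\supseteq V$, and since energy solutions lie in $L^2_\loc([0,\infty);V)$ the path visits $V_\alpha$ at almost every time.

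First I would prove the stochastic continuity of the semigroup on $V_\alpha$. Fix $x\in V_\alpha$ and choose $R>\|x\|_\alpha$: by Theorem~\ref{t:Rexistuniq} and weak--strong uniqueness (Theorem~\ref{t:weakstrong}), under $\Pb_x$ the canonical process $\xi$ coincides almost surely with the smooth solution $u_x^\besp{\alpha,R}$ up to the stopping time $\tau_x^\besp{\alpha,R}$, which is strictly positive because $\|x\|_\alpha<R$. As $t\mapsto u_x^\besp{\alpha,R}(t)$ is continuous in $V_\alpha$ with value $x$ at $t=0$, it follows that $\xi_t\to x$ in $V_\alpha$ as $t\downarrow0$, $\Pb_x$--almost surely, whence $\semi_t\phi(x)\to\phi(x)$ for every $\phi\in C_b(V_\alpha)$ by dominated convergence. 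The very same restarting argument, applied at any instant $s$ at which $\xi_s\in V_\alpha$, delivers the $\Pb_x$-almost sure right-continuity of $t\mapsto\xi_t$ in $V_\alpha$ at $s$.

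Next I would identify the finite-dimensional distributions and the genuine semigroup law. Iterating Theorem~\ref{t:markov} along good times shows that, over any mesh of good times, the law of $\xi$ under $\Pb_x$ is that of a Markov chain with one-step transition $\semi$; taking expectations this yields $\semi_{s+t}=\semi_s\semi_t$ on $B_b(H)$ for almost every $s$ and each fixed $t$. For fixed $t$ the function $\psi=\semi_t\phi$ lies in $C_b(V_\alpha)$ by Theorem~\ref{t:main}, so $s\mapsto\semi_s\psi(x)=\E^{\Pb_x}[\psi(\xi_s)]$ is right-continuous by the previous step; since two right-continuous functions agreeing almost everywhere coincide, the semigroup law extends to all $s,t>0$. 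Consequently the kernels $P(t,x,\cdot)$ form a genuine Markov semigroup on $V_\alpha$, continuous in $x$ by strong Feller, and the co-null family of Markov finite-dimensional distributions of $\Pb_x$ extends, by this continuity in time together with the path continuity of $\xi$ in $D(A)'$, to all meshes; hence $\Pb_x$ is the law of the genuine Markov family built from $P$, which is the assertion.

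The step I expect to be the main obstacle is the passage from good meshes to arbitrary ones, that is, reconciling the topology $D(A)'$ in which the paths are continuous with the topology $V_\alpha$ in which $\semi_t\phi$ is continuous. Concretely, to evaluate $\E^{\Pb_x}[\phi(\xi_t)\mid\field[B]_{s_0}]$ at a deterministic $s_0$ one approximates by good times $s_n\downarrow s_0$ and uses reverse martingale convergence for $\E^{\Pb_x}[\phi(\xi_t)\mid\field[B]_{s_n}]=\semi_{t-s_n}\phi(\xi_{s_n})$; identifying the limit with $\semi_{t-s_0}\phi(\xi_{s_0})$ requires $\xi_{s_n}\to\xi_{s_0}$ in $V_\alpha$, while a priori the $V_\alpha$-norm of the path at the fixed instant $s_0$ need not even be finite. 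This is precisely where the short-time coupling of Section~\ref{s:technical} is decisive: restarting the smooth solution at the co-null set of times for which $\xi_{s}\in V_\alpha$ supplies the missing $V_\alpha$-right-continuity, and combined with the strong Feller continuity of $\semi_{t-s_0}\phi$ on $V_\alpha$ it forces the reverse martingale limit to be the required $\field[B]_{s_0}$-measurable transition. Together with the measurability of $x\mapsto\Pb_x$ furnished by Theorem~\ref{t:main}, this shows that $(\Pb_x)_{x\in V_\alpha}$ is a Markov family for every $\alpha>\tfrac12$.
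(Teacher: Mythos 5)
Your overall skeleton matches the paper's proof: both reduce the statement to \cite[Theorem~6.7]{FlaRom08}, whose only inputs are the strong Feller property (Theorem~\ref{t:main}) and the claim that, for every $x\in V_\alpha$ and every deterministic $t_0>0$, the path $\xi$ is $\Pb_x$--a.s.\ continuous with values in $V_\alpha$ in a neighbourhood of $t_0$. You also correctly identify this claim as the crux. The gap is that your resolution of it --- ``restarting the smooth solution at the co-null set of times for which $\xi_s\in V_\alpha$ supplies the missing $V_\alpha$-right-continuity'' --- is not an argument but a restatement of what must be proved. Restarting at a good time $s<t_0$ only couples $\xi$ with $u^\besp{\alpha,R}_{\xi_s}$ up to the random time $\tau^\besp{\alpha,R}_{\xi_s}$; for this interval to reach $t_0$ with high probability you must apply Proposition~\ref{p:blowup}, which requires \emph{both} $\|\xi_s\|_\alpha\leq R/3$ and $t_0-s\leq c'R^{-\gamma}$ with $\gamma=4/((2\alpha-1)\wedge2)\geq2$. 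Under $\Pb_x$ there is no control of $\|\xi_s\|_\alpha$ that is uniform as $s\uparrow t_0$: for $\alpha\leq1$ the energy bounds give only $\E\int_0^{t_0}\|\xi_r\|_V^2\de[r]<\infty$, i.e.\ an averaged second moment of order $(t_0-s)^{-1}\approx R^{\gamma}$ on the window where $s$ must be chosen, so Chebyshev leaves an error of order $R^{\gamma-2}$, which does not vanish (indeed it diverges, since $\gamma\geq4$ in that range); for $\alpha>1$ no moment of $\|\xi_s\|_\alpha$ is available under $\Pb_x$ at all. So the two constraints of Proposition~\ref{p:blowup} cannot be reconciled with the tools you invoke, and a priori $\xi_{t_0}$ may fail to belong to $V_\alpha$ altogether.

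The paper closes this gap with machinery absent from your proposal. It proves the continuity claim first under the \emph{stationary} solution $\Pb^\star$: there every time-marginal equals the invariant measure $\mu$, and the moment bound $\E^{\mu}\|x\|_\alpha^\eta<\infty$ of \cite[Corollary~3.2]{Rom08} bounds $\Pb^\star[\|\xi_{t_0-2\epsilon}\|_\alpha>R/3]$ by $cR^{-\eta}$ \emph{uniformly in $\epsilon$}, which is exactly the uniformity needed to run Proposition~\ref{p:blowup} at a single scale $R$ and then let $R\uparrow\infty$. This yields the claim for $\mu$-a.e.\ initial datum, and the passage to \emph{every} $x\in V_\alpha$ is again nontrivial: it combines the Markov property at a small good time, the strong Feller property, and the full support of the Markov kernels (Lemma~\ref{l:control}, which needs {\ndue}), via the observation that a continuous function bounded by one and equal to one on a dense set is identically one. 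Stationary solution with invariant-measure moments, blow-up estimate at one fixed deterministic scale, and the irreducibility transfer: none of these appears in your proposal, so its central step remains unproved.
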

\begin{proof}
We prove preliminarily the following claim:  for every $\alpha>\tfrac12$, $t_0>0$
and $x\in V_\alpha$, $\xi$ is continuous with values in $V_\alpha$ in a
neighbourhood of $t_0$, $\Pb_x$--{a.~s.}. Indeed, once this claim is proved,
the proposition follows as in~\cite[Theorem~6.7]{FlaRom08}, since the only
necessary ingredient is that the transition semigroup is strong Feller.

Let $\mu$ be the unique invariant measure of $(\Pb_x)_{x\in H}$ and let $\Pb^\star$
be the corresponding stationary solution (that is, the solution starting at $\mu$).
We notice that, by \cite[Corollary~3.2]{Rom08} (which depends only on Theorem A.2
in the same paper and whose assumption is {\nuno}), for every $\beta<1+2\alpha_0$
there is $\eta = \eta(\beta)>0$ such that $\E^{\mu}\|x\|_\beta^\eta<\infty$.

Fix $\alpha>\tfrac12$, $t_0>0$ and $x\in V_\alpha$. For every $0<a<b$, set
$A(a,b) = C((a,b);V_\alpha)$, we wish to show that
$\Pb_x[\xi\in\bigcup_\epsilon A(t_0-\epsilon,t_0+\epsilon)]=1$. By the Markov property,
\begin{multline*}
\Pb^\star[A(t_0-\epsilon,t_0+\epsilon)]
  \geq \Pb^\star[\|\xi_{t_0-2\epsilon}\|_\alpha\leq\tfrac{R}3]\inf_{\|y\|_\alpha\leq\frac{R}3}\Pb_y[\xi\in A(\epsilon,3\epsilon)]
  \geq\\
  \geq \Bigl(1-\frac{c}{R^\eta}\E^\mu[\|x\|_\alpha^\eta]\Bigr)\inf_{\|y\|_\alpha\leq\frac{R}3}\Pb_y[\xi\in A(\epsilon,3\epsilon)]
\end{multline*}
Using Theorem~\ref{t:weakstrong} and taking $\epsilon\leq cR^{-\gamma}$ (where $c$,
$\gamma$ are from Proposition~\ref{p:blowup}), we have
\begin{multline*}
  \Pb_y[\xi\in A(\epsilon,3\epsilon)]
    = \Pb_y^\besp{\alpha,R}[\xi\in A(\epsilon,3\epsilon)]
      + {}\\
      + \bigl(\Pb_y[\xi\in A(\epsilon,3\epsilon),\tau^\besp{\alpha,R}<3\epsilon]
            - \Pb_y^\besp{\alpha,R}[\xi\in A(\epsilon,3\epsilon),\tau^\besp{\alpha,R}<3\epsilon]).
\end{multline*}
Clearly, $\Pb_y^\besp{\alpha,R}[\xi\in A(\epsilon,3\epsilon)]=1$, while the last
term on the right hand side converges to $0$ for $\epsilon\downarrow0$ and
$R\uparrow\infty$. In conclusion
$\inf_{\|y\|_\alpha\leq\frac{R}3}\Pb_y[\xi\in A(\epsilon,3\epsilon)]\to0$
and $\Pb^\star[\xi\in\bigcup_\epsilon A(t_0-\epsilon,t_0+\epsilon)]=1$. In
particular $\Pb_x[\xi\in\bigcup_\epsilon A(t_0-\epsilon,t_0+\epsilon)]=1$
for $\mu$--{a.~e.} $x$, hence for all $x$ by the strong Feller property
and Lemma~\ref{l:control}.
\end{proof}
\section{Technical tools}\label{s:technical}

\subsection{Short time coupling with a smooth problem}\label{ss:approx}

We follow the approach of \cite{FlaMas95} (see also~\cite{FlaRom08,Rom08}) to
construct a regular process which coincides with any solution to~\eqref{e:nse}
for a short time, using a cut-off of the nonlinearity. In this way with large
probability the two solutions have the same trajectories on a small time
interval.
\subsubsection{Existence for the regular problem}

Let $\chi:[0,\infty]\to[0,1]$ be a non-increasing $C^\infty$ function such
that $\chi\equiv 1$ on $[0,1]$ and $\chi_R\equiv 0$ on $[2,\infty)$ (see
Figure~\ref{f:chi}). Given $R\geq1$, set $\chi_R(x) = \chi(\tfrac{x}{R})$.
\begin{figure}[h]
  \centering
  \begin{tikzpicture}[x=1.5mm,y=1mm,line width=0.3mm]
    \draw [->,line width=0.1mm] (0,0) -- (32,0);
    \draw [->,line width=0.1mm] (0,0) -- (0,15);
    \draw (0,12)--(12,12) to [out=0,in=180] (24,0) -- (30,0);
    \draw [dashed,line width=0.1mm] (12,0) -- (12,12); 
    \draw node at (-1,12) {$1$};
    \draw node at (12,-3) {$1$};
    \draw node at (24,-3) {$2$};
  \end{tikzpicture}
\caption{The cut-off function $\chi$}\label{f:chi}
\end{figure}
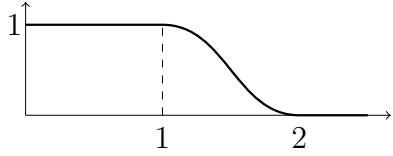
Consider the following problem,
\begin{equation}\label{e:strongR}
  \begin{cases}
    du + \nu Au\de + \chi_R(\|u\|_\alpha)B(u,u)\de = \cov^{\frac12}\de[W],\\
    u(0)=x.
  \end{cases}
\end{equation}
In the following we analyse for which values of ($\alpha,\alpha_0)$
the above problem is uniquely solvable.
\begin{theorem}\label{t:Rexistuniq}
Assume \nuno (Assumption~\ref{a:mainass}). Given $R\geq1$ and $\tfrac12<\alpha<1+2\alpha_0$,
for every $x\in V_\alpha$ problem \eqref{e:strongR} has a path-wise unique
martingale solution $\Pb^\besp{\alpha,R}_x$ on $\Ons$, with
\begin{equation}\label{e:Rcontinua}
\Pb^\besp{\alpha,R}_x[C([0,\infty);V_\alpha)]=1.
\end{equation}
Moreover, $(\Pb^\besp{\alpha,R}_x)_{x\in V_\alpha}$ is a Markov family and
its transition semigroup is Feller on $V_\alpha$.
Finally, for every $0\leq s<t$,
\begin{equation}\label{e:Rei}
  \frac12\|v_t\|_H^2
    + \nu\int_s^t\|v_r\|_V^2\de[r]
    - \int_s^t\chi_R(\|v_r+z_r\|_\alpha)\scal{z_r, B(v_r+z_r, v_r)}\de[r]
    = \frac12\|v_s\|_H^2,
\end{equation}
$\Pb^\besp{\alpha,R}_x$--{a.~s.}, where $z$ is the solution to~\eqref{e:stokes}
and $v$ solves~\eqref{e:strongRv} below.
\end{theorem}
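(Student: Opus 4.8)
The plan is to reduce the stochastic problem to a deterministic one by the change of variables $v=u-z$, where $z$ solves the linear Stokes equation~\eqref{e:stokes}. Under \nuno\ the Ornstein--Uhlenbeck process $z$ is Gaussian, and by computing the Hilbert--Schmidt norms of $A^{\gamma/2}\e^{-\nu A s}\cov^{\frac12}$ — equivalently the trace $\operatorname{Tr}(A^{\gamma-1}\cov)$ of the stationary covariance — against the eigenvalue asymptotics $\lambda_k\approx k^{2/3}$ of the Stokes operator on $\Torus$, one sees that $z$ has paths almost surely in $C([0,\infty);V_\gamma)$, indeed H\"older continuous in time, precisely for $\gamma<1+2\alpha_0$. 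This is the computation that pins down the admissible range $\alpha<1+2\alpha_0$ in the statement. Fixing $\gamma\in(\alpha,1+2\alpha_0)$ and working pathwise for almost every realisation of $z$, it then suffices to solve the random PDE~\eqref{e:strongRv},
\[
  \partial_t v + \nu A v + \chi_R(\|v+z\|_\alpha)\,B(v+z,v+z)=0,\qquad v(0)=x,
\]
and to set $u_x^\besp{\alpha,R}=v+z$: any pathwise solution yields a martingale solution of~\eqref{e:strongR}, and pathwise uniqueness for $v$ transfers to~\eqref{e:strongR} because the only randomness enters through $z$, which is explicitly built from $W$.

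The core of the argument is the deterministic well-posedness of the $v$-equation. The decisive gain from the cut-off is that wherever the nonlinearity is active one has $\|v+z\|_\alpha\leq 2R$, so $B(v+z,v+z)$ is controlled by $R$ through the bilinear estimate of Lemma~\ref{l:Bnostro} and Corollary~\ref{c:Bnostro}, while $\chi_R$ is Lipschitz; the resulting nonlinear map is bounded and Lipschitz on bounded sets, which is enough to run a fixed-point (or Galerkin) scheme and, thanks to the boundedness, to pass from local to global solutions. I expect two regimes according to how much the Stokes semigroup regularises: for the larger values of $\alpha$ I would work at the energy level and obtain $v\in L^\infty_\loc(0,\infty;V_\alpha)\cap L^2_\loc(0,\infty;V_{\alpha+1})$ with $\dot v\in L^2_\loc(0,\infty;V')$ (this is Lemma~\ref{l:Rweak}), whereas for the smaller values I would use the mild formulation together with the smoothing bounds~\eqref{e:mildbound} to place $v\in C(0,\infty;V_\alpha)$ while controlling $v$ in $L^2_\loc(0,\infty;V_\beta)$ for every $\beta<\alpha+1$ (this is Lemma~\ref{l:Rmild}). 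Uniqueness in either regime follows from a Gronwall estimate on the difference $\delta=v_x-v_y$ of two solutions, using the Lipschitz dependence of $\chi_R$ and the cancellation $\scal{B(w,\delta),\delta}=0$.

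The continuity~\eqref{e:Rcontinua} is then read off from the regularity just obtained together with the continuity of $z$ in $V_\alpha$. The Markov property is immediate from pathwise uniqueness and the independence of the increments of the noise, and the Feller property on $V_\alpha$ follows from continuous dependence on the initial datum: the same Gronwall argument yields a local-in-time stability bound for $\|v_x-v_y\|_\alpha$ in terms of $\|x-y\|_\alpha$, and one concludes by dominated convergence. Finally the energy equality~\eqref{e:Rei} is obtained by testing the $v$-equation with $v$ in $H$: the trilinear term $\chi_R\scal{B(v+z,v+z),v}$ collapses, via $\scal{B(v+z,v),v}=0$ and the antisymmetry $\scal{B(v+z,z),v}=-\scal{z,B(v+z,v)}$, to the single integrand appearing in~\eqref{e:Rei}.

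The main obstacle I anticipate is upgrading the energy balance from an inequality to the exact equality~\eqref{e:Rei} uniformly in $\alpha$. This amounts to justifying that $t\mapsto\|v_t\|_H^2$ is absolutely continuous with derivative $2\scal[V',V]{\dot v,v}$, which is routine in the energy regime, where $v\in L^2_\loc(0,\infty;V_{\alpha+1})$ and $\dot v\in L^2_\loc(0,\infty;V')$, but delicate in the mild regime, where a priori $v$ need not lie in $L^2_\loc(0,\infty;V_{\alpha+1})$. There one must exploit the precise mild bounds of Lemma~\ref{l:Rmild} to make sense of the duality pairing and to legitimise the Lions--Magenes differentiation lemma — which is exactly where the split into Lemmas~\ref{l:Rweak} and~\ref{l:Rmild} earns its keep.
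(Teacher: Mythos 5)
Your overall route is exactly the paper's: decompose $u=v+z$ with $z$ the Ornstein--Uhlenbeck process (whose a.s.\ regularity $z\in C([0,\infty);V_\gamma)$ for all $\gamma<1+2\alpha_0$ is what your trace computation gives), solve the random PDE~\eqref{e:strongRv} pathwise by splitting into an energy regime and a mild regime, prove uniqueness by a Gronwall argument, deduce the Markov and Feller properties from pathwise uniqueness and continuous dependence, and obtain~\eqref{e:Rei} by showing that $\partial_t v$ lies in a dual space good enough to differentiate $t\mapsto\|v_t\|_H^2$; this is precisely the architecture of Lemmas~\ref{l:Rweak}, \ref{l:Rmild} and~\ref{l:Runiq}.

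There is, however, one genuine error: you have the two regimes backwards, and the direction matters. The dividing line is not ``large $\alpha$ versus small $\alpha$'' but the line $\alpha=\tfrac12+4\alpha_0$: the energy method (Lemma~\ref{l:Rweak}) works precisely for $\alpha<\min\{\tfrac12+4\alpha_0,\,1+2\alpha_0\}$, and the mild formulation (Lemma~\ref{l:Rmild}) is what covers the \emph{upper} part $\alpha\in(\tfrac12+4\alpha_0,1+2\alpha_0)$. The reason is quantitative: for $\alpha<\tfrac32$ the $V_\alpha$-energy estimate of the trilinear term must handle $\scal[V_\alpha]{v,B(z,z)}$, and any product estimate of the type of Lemma~\ref{l:Bnostro} pairs this against $\|v\|_{\alpha+1}$ at the cost of $\|z\|_a\|z\|_b$ with $a+b\geq\alpha+\tfrac32$; since $z$ has no better a.s.\ regularity than $V_{1+2\alpha_0-}$, this is finite only when $\alpha+\tfrac32<2(1+2\alpha_0)$, i.e.\ $\alpha<\tfrac12+4\alpha_0$ (this is the constraint visible in~\eqref{e:lt32}). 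So in the regime you propose to treat by energy estimates ($\alpha$ near the upper endpoint $1+2\alpha_0$, which forces $\alpha_0<\tfrac14$ and hence $\alpha<\tfrac32$), the energy inequality involves an almost surely infinite quantity and the step fails; there the parabolic smoothing of the mild formulation is mandatory, not a matter of taste. Note that this is exactly the regime the paper is after, since $(\tfrac12+4\alpha_0,1+2\alpha_0)$ is nonempty precisely when $\alpha_0<\tfrac14$, i.e.\ for the rough noise that earlier works excluded. (The grain of truth in your heuristic is that for $\alpha\geq\tfrac32$, which forces $\alpha_0>\tfrac14$, the energy estimate~\eqref{e:gt32} needs only $\|u\|_\alpha$ and is unproblematic; but that case is already inside the energy regime.) A secondary imprecision: your Gronwall stability bound for the difference of two flows lives in $H$ (where the cancellation you invoke is available) or in $V_{\alpha-3/4}$, not in $V_\alpha$; to obtain the $V_\alpha$-continuity of the flow needed for the Feller property one must feed this weak-norm information back into the mild formulation and use semigroup smoothing, which is exactly what Lemma~\ref{l:parabRreg} does.
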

\begin{remark}
The two bounds on $\alpha$ required in the assumptions of the above theorem
have a different justification. The requirement $\alpha<1+2\alpha_0$ is due
to the fact that the linearisation at $0$ (that is, problem~\eqref{e:stokes})
has that maximal regularity (see for instance~\cite{DapZab92}). On the other
hand, $\alpha>\tfrac12$ because $H^{1/2}$ is the largest space in the
Sobolev--Hilbert hierarchy of spaces (see \cite{FujKat64}).
\end{remark}
We give a short sketch of the proof of the above theorem, which can be
made rigorous by using suitable approximations (such as Galerkin approximations)
as in the proof of existence for the Navier-Stokes equations themselves (see for
instance \cite{FlaGat95}).

Let $z$ denote the solution to the Stokes problem~\eqref{e:stokes}
starting at $0$. By the assumption on $\cov$, trajectories of the noise
belong to $C^\gamma([0,\infty);V_{\alpha'})$ for all $\gamma\in[0,\frac12)$
and all $\alpha'<2\alpha_0$. Hence, with probability one,
$z\in C([0,\infty);V_{1+2\alpha_0-\epsilon}))$, for all $\epsilon>0$.
In particular, $z\in C([0,\infty);V_\alpha)$ with probability one.

Fix $\alpha$, $R\geq1$ and $x\in V_\alpha$ and write $u = v + z$,
where $v$ is the solution to
\begin{equation}\label{e:strongRv}
  \partial_t v + \nu Av + \chi_R(\|v+z\|_\alpha)B(v+z,v+z)=0.
\end{equation}
with initial condition $v(0)=x$.
\begin{lemma}\label{l:Rweak}
Assume {\nuno} from Assumption~\ref{a:mainass} and
$\tfrac12<\alpha<\min\{\tfrac12+4\alpha_0,1+2\alpha_0)$.
Then for every $x\in V_\alpha$ there is a solution
$v\in C([0,\infty);V_\alpha)\cap L^2_\loc([0,\infty);V_{\alpha+1})$
to problem~\eqref{e:strongRv}. Moreover, $v$ satisfies the
balance~\eqref{e:Rei}.
\end{lemma}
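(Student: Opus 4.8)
The plan is to prove existence and the energy balance for the cut-off equation~\eqref{e:strongRv} by a Galerkin approximation, following the standard scheme for the Navier--Stokes equations but exploiting the cut-off $\chi_R$ to gain compactness and global bounds. First I would fix $\alpha$ in the stated range, fix the realisation of $z\in C([0,\infty);V_\alpha)$ (which holds almost surely by the discussion preceding the lemma, since $\alpha<1+2\alpha_0$), and introduce the Galerkin projections $P_m$ onto the first $m$ eigenfunctions of $A$. The approximate problem $\partial_t v_m + \nu A v_m + P_m[\chi_R(\|v_m+z_m\|_\alpha)B(v_m+z_m,v_m+z_m)] = 0$ with $z_m=P_m z$ and $v_m(0)=P_m x$ is an ODE in a finite-dimensional space with locally Lipschitz (indeed smooth) coefficients, so it has a unique local solution; the global bounds derived next guarantee it does not blow up.

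The key a priori estimate is the energy balance itself. Testing the Galerkin equation against $v_m$ and using $\scal{B(w,w),v}=\scal{B(w,v),w}$ together with $\scal{B(w,v),v}=0$, one writes $\scal{B(v_m+z_m,v_m+z_m),v_m} = \scal{B(v_m+z_m,v_m),v_m+z_m}=\scal{z_m,B(v_m+z_m,v_m)}$, producing exactly the cubic term appearing in~\eqref{e:Rei}. The main point is to control this term: I would invoke Corollary~\ref{c:Bnostro} or Lemma~\ref{l:Bnostro} to bound $|\scal{z_m,B(v_m+z_m,v_m)}|$ by an expression of the form $c(1+\|z\|_\alpha^p)\|v_m\|_V \|v_m+z_m\|_\alpha$, and then use that $\chi_R$ forces $\|v_m+z_m\|_\alpha\leq 2R$ on its support. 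Young's inequality then absorbs the $\|v_m\|_V$ factor into $\nu\|v_m\|_V^2$, leaving a bound
\[
  \frac{d}{dt}\|v_m\|_H^2 + \nu\|v_m\|_V^2 \leq c(\nu,R)(1+\|z\|_\alpha^{2p}),
\]
which by Gronwall yields uniform-in-$m$ bounds for $v_m$ in $L^\infty_\loc(H)\cap L^2_\loc(V)$. A separate estimate in $V_\alpha$, again using the cut-off to keep $\|v_m+z_m\|_\alpha$ bounded and Lemma~\ref{l:Bnostro} with indices matched to land in $V_{\alpha-1}$, together with Young's inequality absorbing the top-order term into $\nu\|v_m\|_{\alpha+1}^2$, gives uniform bounds in $L^\infty_\loc(V_\alpha)\cap L^2_\loc(V_{\alpha+1})$; this is where the restriction $\alpha<\tfrac12+4\alpha_0$ enters, since it is precisely what makes the nonlinear term subcritical for the $V_\alpha$ estimate.

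With these uniform bounds I would extract a subsequence converging weakly-$*$ in $L^\infty_\loc(V_\alpha)$, weakly in $L^2_\loc(V_{\alpha+1})$, and, via an Aubin--Lions compactness argument (using $\dot v_m$ bounded in $L^2_\loc(V_{\alpha-1})$), strongly in $L^2_\loc(V_\beta)$ for $\beta<\alpha+1$. Strong convergence in a space where $\chi_R(\|\cdot+z\|_\alpha)$ and $B$ are continuous lets me pass to the limit in the nonlinear term and identify $v$ as a solution of~\eqref{e:strongRv} with $v\in C([0,\infty);V_\alpha)\cap L^2_\loc([0,\infty);V_{\alpha+1})$; continuity in $V_\alpha$ follows from $v\in L^\infty(V_\alpha)$, $\dot v\in L^2(V_{\alpha-1})$ and a standard interpolation/Lions--Magenes argument. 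The hard part will be upgrading the energy \emph{inequality} that survives weak limits into the energy \emph{equality}~\eqref{e:Rei}: for this I expect to need the regularity $v\in L^2_\loc(V_{\alpha+1})$ with $\alpha+1>\tfrac32$ to justify that $\scal{z,B(v+z,v)}$ is genuinely integrable and that $t\mapsto\|v_t\|_H^2$ is absolutely continuous with the claimed derivative, so that no energy is lost in the limit. This is the delicate step, and it is exactly the regularity that the condition $\alpha>\tfrac12$ (equivalently $\alpha+1>\tfrac32$) is designed to deliver.
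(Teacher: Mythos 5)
Your overall scheme (Galerkin approximation, a priori bounds exploiting the cut-off, Aubin--Lions compactness, then regularity of $\dot v$ to upgrade the energy inequality to the equality~\eqref{e:Rei}) is the same as the paper's, which itself only sketches this step. But there is a genuine gap at the one estimate that carries the whole lemma: the $V_\alpha$ bound in the regime $\tfrac12<\alpha<\tfrac32$. Your justification --- the cut-off keeps $\|v_m+z_m\|_\alpha\leq 2R$, apply Lemma~\ref{l:Bnostro} with indices ``landing in $V_{\alpha-1}$'', absorb the top-order term by Young --- only closes when $\alpha\geq\tfrac32$. Indeed, pairing $\scal[V_\alpha]{v,B(u,u)}\leq\|B(u,u)\|_{\alpha-1}\|v\|_{\alpha+1}$ and estimating $B$ via Lemma~\ref{l:Bnostro} with $c=-\alpha$ and $a=b=\alpha$ requires $2(a+b+c)=2\alpha\geq3$; for $\alpha<\tfrac32$ (which is the \emph{only} admissible regime when $\alpha_0\leq\tfrac14$, i.e.\ precisely the new content of this lemma) one is forced to take $a=b=\tfrac14(2\alpha+3)>\alpha$, a norm the cut-off does \emph{not} control. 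The paper's inequality~\eqref{e:lt32} handles this by splitting $u=v+z$: the $v$-part of $\|u\|_{\frac14(2\alpha+3)}$ is interpolated between $V_\alpha$ and $V_{\alpha+1}$ and absorbed by Young's inequality, whose exponents work exactly because $\alpha>\tfrac12$; the $z$-part cannot be absorbed and must simply be finite, i.e.\ one needs $z\in V_{\frac14(2\alpha+3)}$ a.s., which holds if and only if $\tfrac14(2\alpha+3)<1+2\alpha_0$, that is $\alpha<\tfrac12+4\alpha_0$. So the hypothesis you were supposed to use is a constraint on the regularity of the Ornstein--Uhlenbeck process $z$, not, as you write, a ``subcriticality'' property of the nonlinear term; as your sketch stands, there is no place where $\alpha_0$ could enter the estimate at all. (The borderline case $\alpha=\tfrac32$ also needs a separate argument, cf.~\eqref{e:equal32}, since Lemma~\ref{l:Bnostro} demands strict inequality when an index equals $\tfrac32$.)

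Two further remarks. First, the same case analysis is needed for the bound on $\dot v_m$ in $L^2_\loc(V_{\alpha-1})$ that you invoke for Aubin--Lions and for the continuity in $V_\alpha$: the paper obtains it by testing with $A^{\alpha-1}\dot v$, and the right-hand side is again controlled by the three bilinear estimates \eqref{e:gt32}, \eqref{e:equal32}, \eqref{e:lt32}, so the gap above propagates to that step too. Second, a minor slip: in your $H$-level estimate of the cubic term, Lemma~\ref{l:Bnostro} gives $|\scal{z,B(u,v)}|\leq c\|u\|_\alpha\|v\|_V\|z\|_{\frac32-\alpha}$, and for $\alpha<\tfrac34$ the norm $\|z\|_{\frac32-\alpha}$ is \emph{stronger} than $\|z\|_\alpha$; it is still finite (since $\tfrac32-\alpha<1<1+2\alpha_0$), so this costs nothing, but the bound cannot be phrased purely in terms of $\|z\|_\alpha$ as you state.
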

\begin{proof}
For brevity, we only give details of the crucial estimates needed to prove
that \eqref{e:strongRv} can be solved pathwise and has a global weak solution
in $C([0,\infty);V_\alpha)$ and $L^2_\loc([0,\infty);V_{\alpha+1})$.
The energy estimate in $V_\alpha$ yields
\[
  \frac{d}{dt}\|v\|_\alpha^2 + 2\nu\|v\|_{\alpha+1}^2
  \leq 2\chi_R(\|u\|_\alpha)\scal[V_\alpha]{v, B(u, u)}.
\]
If $\alpha>\tfrac32$, using Lemma~\ref{l:Bnostro} (with $a=b=\alpha$
and $c=-\alpha$) and Young's inequality (with exponent $2$),
\begin{equation}\label{e:gt32}
  2\chi_R(\|u\|_\alpha)\scal[V_\alpha]{v, B(u, u)}
  \leq c\chi_R(\|u\|_\alpha)\|v\|_{\alpha+1}\|u\|_\alpha^2
  \leq  \nu\|v\|_{\alpha+1}^2 + cR^4.
\end{equation}
which implies an a-priori estimate in $L^\infty_\loc([0,\infty);V_\alpha)$
and $L^2_\loc([0,\infty);V_{\alpha+1})$.

If $\alpha=\tfrac32$, choose $\epsilon<1$ such that $\alpha+\epsilon<1+2\alpha_0$.
Lemma~\ref{l:Bnostro} ($a=\alpha$, $b=\alpha+\epsilon$, $c=-\alpha$),
interpolation of $V_{\alpha+\epsilon}$ between $V_\alpha$ and $V_{\alpha+1}$,
and Young's inequality (with exponents $2$ and $\tfrac{2}{1+\epsilon}$) yield
\begin{equation}\label{e:equal32}
  \begin{split}
    2\chi_R(\|u\|_\alpha^2)\scal[V_\alpha]{v, B(u, u)}
    &\leq c\chi_R(\|u\|_\alpha)\|v\|_{1+\alpha}\|u\|_\alpha\|u\|_{\alpha+\epsilon}\\
    &\leq cR\|v\|_{1+\alpha}\bigl[\|z\|_{\alpha+\epsilon} + (R + \|z\|_{\alpha})^{1-\epsilon}\|v\|_{\alpha+1}^\epsilon\bigr]\\
    &\leq \nu\|v\|_{\alpha+1}^2 + cR^2\|z\|_{\alpha+\epsilon}^2 + cR^{\frac{2}{1-\epsilon}}(R + \|z\|_\alpha)^2,
  \end{split}
\end{equation}
and again an a-priori estimate for $v$ in $L^\infty_\loc([0,\infty);V_\alpha)$
and $L^2_\loc([0,\infty);V_{\alpha+1})$.

Finally, if $\alpha<\tfrac32$, we use Lemma~\ref{l:Bnostro} ($a=b=\tfrac14(2\alpha+3)$,
$c=-\alpha$), interpolation of $V_{\frac14(2\alpha+3)}$ and Young's inequality,
\begin{equation}\label{e:lt32}
  \begin{split}
    2\chi_R(\|u\|_\alpha)\scal[V_\alpha]{v, B(u, u)}
    &\leq c\chi_R(\|u\|_\alpha)\|v\|_{\alpha+1}\|u\|_{\frac14(2\alpha+3)}^2\\
    &\leq \nu \|v\|_{\alpha+1}^2 + c\|z\|_{\frac14(2\alpha+3)}^4 + c(R + \|z\|_\alpha)^{\frac{2(2\alpha+1)}{2\alpha-1}}.
  \end{split}
\end{equation}
Here we need $\tfrac14(2\alpha+3)<1+2\alpha_0$ (hence $\alpha<\tfrac12+4\alpha_0$),
to have $\|z\|_{\frac14(2\alpha+3)}$ finite.

We also need an a-priori estimate for $\partial_t v$ in $L^2(0,T;V_{\alpha-1})$,
for all $T>0$. This will imply continuity in time of $v$ on $V_\alpha$
(see for instance \cite{Tem77}). Together with continuity of $z$, it
implies \eqref{e:Rcontinua}. To do this, multiply the equations by
$A^{\alpha-1}\dot v$ to get
\[
  2\|\dot v\|_{\alpha-1}^2 + \nu\frac{d}{dt}\|v\|_\alpha^2
    = - 2\chi_R(\|u\|_\alpha)\scal{A^{\alpha-1}\dot v, B(u,u)}.
\]
The right hand side can be estimated in the three cases through Lemma~\ref{l:Bnostro}
as in \eqref{e:gt32}, \eqref{e:equal32} and \eqref{e:lt32} respectively (using
the same values of $a$, $b$, $c$).

Finally, since $\partial_t v\in L^2(0,T;V_{\alpha-1})$, it follows that
equation~\eqref{e:strongRv} is satisfied in $V'$ and $t\mapsto\|v(t)\|_H^2$ is
differentiable with derivative $2\scal[V',V]{\partial_t v,v}$. Equality~\eqref{e:Rei}
follows easily from these two facts and the properties of the nonlinearity.
\end{proof}
\begin{lemma}\label{l:Rmild}
Assume {\nuno} from Assumption~\ref{a:mainass} and let
$\alpha\in(\tfrac12+4\alpha_0,1+2\alpha_0)$. Then
for every $x\in V_\alpha$ there is a solution $v\in C([0,\infty);V_\alpha)$
to problem~\eqref{e:strongRv}. Moreover, $v$ satisfies the balance~\eqref{e:Rei}
and for every $\beta\in(\alpha,1+2\alpha_0)$ and every $T>0$ there is
$c = c(\alpha,\beta,R,T)>0$ such that
\begin{equation}\label{e:mildbound}
  \sup_{t\leq T} (t\wedge1)^{\frac12(\beta-\alpha)}\|v(t)\|_\beta
  \leq c(\|x\|_\alpha + \sup_{t\leq T}\|z(t)\|_\beta).
\end{equation}
\end{lemma}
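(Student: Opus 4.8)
Since $\alpha>\tfrac12+4\alpha_0$, the $V_\alpha$--energy estimate used in Lemma~\ref{l:Rweak} is no longer available: in the borderline computation~\eqref{e:lt32} one would need $\|z\|_{\frac14(2\alpha+3)}$ to be finite, that is $\tfrac14(2\alpha+3)<1+2\alpha_0$, which is exactly the condition $\alpha<\tfrac12+4\alpha_0$ that now fails. The plan is therefore to abandon the energy method and to solve~\eqref{e:strongRv} in its mild (Duhamel) form
\[
  v(t)=\e^{-\nu At}x-\int_0^t\e^{-\nu A(t-s)}\chi_R(\|v+z\|_\alpha)B(v+z,v+z)(s)\de[s],
\]
exploiting the smoothing of the analytic semigroup $\e^{-\nu At}$ to compensate the derivative lost in $B$.

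First I would establish local existence and path-wise uniqueness in $C([0,T];V_\alpha)$ by a contraction argument. By Corollary~\ref{c:Bnostro} one has $\|B(u,u)\|_{-c_0}\leq c\|u\|_\alpha^2$ for a suitable $c_0$, which together with $\|\e^{-\nu A\tau}f\|_\alpha\leq c(\nu\tau)^{-\frac12(\alpha+c_0)}\|f\|_{-c_0}$ gives a Duhamel kernel of order $(t-s)^{-\kappa}$ with $\kappa<1$ precisely because $\alpha>\tfrac12$, so the integral operator is a contraction on a small ball for $T$ small. The crucial point is that the cut-off forces $\chi_R(\|u\|_\alpha)\|u\|_\alpha^2\leq 4R^2$ wherever it does not vanish, so the nonlinear term is globally bounded (by a multiple of $R^2$) in the relevant negative space; feeding this back into the Duhamel formula produces an a~priori bound on $\sup_{t\leq T}\|v(t)\|_\alpha$ that does not blow up in finite time. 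Hence the local solution extends to a global one belonging to $C([0,\infty);V_\alpha)$.

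The heart of the statement is the smoothing estimate~\eqref{e:mildbound}. I would apply the semigroup in the Duhamel formula targeting the stronger norm $\|\cdot\|_\beta$: the linear part contributes $\|\e^{-\nu At}x\|_\beta\leq c(\nu t)^{-\frac12(\beta-\alpha)}\|x\|_\alpha$, which already carries the weight $(t\wedge1)^{\frac12(\beta-\alpha)}$ appearing in~\eqref{e:mildbound}. For the nonlinear part I would use Corollary~\ref{c:Bnostro} keeping \emph{one} factor of $B(v+z,v+z)$ in $V_\alpha$ (where it is controlled by $2R$ thanks to the cut-off) and the other in $V_\beta$; writing $\|v+z\|_\beta\leq\|v\|_\beta+\|z\|_\beta$, this yields a \emph{linear} singular Volterra inequality
\[
  \|v(t)\|_\beta\leq c(\nu t)^{-\frac12(\beta-\alpha)}\|x\|_\alpha+c_R\int_0^t(t-s)^{-\kappa(\beta)}\bigl(\|v(s)\|_\beta+\|z(s)\|_\beta\bigr)\de[s],
\]
with $\kappa(\beta)<1$ in the relevant regime. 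The contribution of $\|z\|_\beta$ is controlled by $\sup_{s\leq T}\|z(s)\|_\beta$, which is finite since $\beta<1+2\alpha_0$, while the self-interacting term is absorbed by the weighted Gronwall Lemma~\ref{l:mildbound} (with weight function $a_\beta$), exactly as in the first part of the proof of Lemma~\ref{l:parabRreg}; this gives~\eqref{e:mildbound}. If the per-step integrability threshold for $\kappa(\beta)$ stops short of $1+2\alpha_0$, I would iterate the estimate, using at each stage the regularity of $v$ already gained (and the full regularity $z\in C([0,\infty);V_\beta)$, $\beta<1+2\alpha_0$), until the whole interval $(\alpha,1+2\alpha_0)$ is covered. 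As usual, all a~priori bounds are first derived on Galerkin approximations, where the norms are finite, and then passed to the limit.

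Finally, the energy balance~\eqref{e:Rei} follows once sufficient regularity is in hand. From~\eqref{e:mildbound} one gets $v\in L^2_\loc([0,\infty);V_\beta)$ for every $\beta<1+2\alpha_0$ (in particular $v\in L^2_\loc([0,\infty);V)$, since the singularity $t^{-\frac12(1-\alpha)}$ is square integrable for $\alpha>0$), whence $\partial_t v=-\nu Av-\chi_R(\|u\|_\alpha)B(u,u)$ pairs with $v$ and $t\mapsto\|v(t)\|_H^2$ is absolutely continuous with derivative $2\scal[V',V]{\partial_t v,v}$. Testing~\eqref{e:strongRv} against $v$ and using the antisymmetry relations $\scal{B(a,b),b}=0$ and $\scal{B(a,c),b}=-\scal{B(a,b),c}$ reduces the nonlinear term $\chi_R(\|u\|_\alpha)\scal{B(u,u),v}$ to $-\chi_R(\|u\|_\alpha)\scal{z,B(u,v)}$; integrating in time then gives exactly~\eqref{e:Rei}, as at the end of Lemma~\ref{l:Rweak}. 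I expect the main obstacle to be the third step: keeping the Volterra inequality linear in the data, controlling the interplay between the singular kernel $(t-s)^{-\kappa(\beta)}$ and the time weight through Lemma~\ref{l:mildbound}, and arranging the iteration so that the integrability threshold advances past every intermediate value up to $1+2\alpha_0$.
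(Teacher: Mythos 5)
Your proposal is correct and, at its core, follows the same route as the paper: the paper also abandons the $V_\alpha$--energy method for this range of parameters, works with the Duhamel formula, estimates the nonlinearity by Corollary~\ref{c:Bnostro} with one factor at level $\alpha$ (absorbed by the cut-off, producing the factor $R$) and the other at level $\beta$, and closes the resulting singular Volterra inequality with the weight function of Lemma~\ref{l:mildbound}; the balance~\eqref{e:Rei} is likewise deduced from $v\in L^2_\loc(0,\infty;V)$ and $\partial_t v\in L^2_\loc(0,\infty;V')$, exactly as you indicate. Two differences are worth recording. First, for existence the paper does not run a fixed-point argument: it takes Galerkin approximations, uses the standard $L^\infty(0,T;H)\cap L^2(0,T;V)$ bounds for compactness, and upgrades convergence to uniform convergence in $V_\alpha$ on $[\epsilon,T]$ via Ascoli--Arzel\`a, the equicontinuity coming from the same mild estimate~\eqref{e:mildalpha}; path-wise uniqueness is then delegated to the separate Lemma~\ref{l:Runiq}. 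Your contraction-plus-global-extension scheme is a legitimate, arguably more self-contained alternative, since the cut-off makes the nonlinearity Lipschitz on balls of $C([0,T];V_\alpha)$ and the a priori bound $\|v(t)\|_\alpha\leq\|x\|_\alpha+cR^2t^{\frac14(2\alpha-1)}$ rules out blow-up. Second, your iteration hedge is unnecessary: the interval $(\tfrac12+4\alpha_0,1+2\alpha_0)$ is nonempty only if $\alpha_0<\tfrac14$, so in this lemma both $\alpha$ and every admissible $\beta$ lie below $\tfrac32$; taking $a=\alpha$, $b=\beta$ in Corollary~\ref{c:Bnostro} gives $\delta=\alpha+\beta-\tfrac52$, hence a Volterra kernel exponent $\tfrac14(5-2\alpha)$ which is independent of $\beta$ and strictly less than $1$ precisely because $\alpha>\tfrac12$, so a single pass covers the whole range $\beta\in(\alpha,1+2\alpha_0)$.
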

\begin{proof}
The standard bounds in $L^\infty(0,T;H)$ and $L^2(0,T;V)$ ensure compactness
of approximations (as in standard proofs for Navier--Stokes~\cite{Tem77}).
Convergence in $V_\alpha$ is needed in order to show that any limit point
is a solution. This follows from Ascoli-Arzel\`a theorem. Indeed,
Corollary~\ref{c:Bnostro} (with $a=b=\alpha$) implies that (we omit
the subscript $n$ for simplicity),
\begin{equation}\label{e:mildalpha}
  \begin{split}
    \|v(t)\|_\alpha
    &\leq \|\e^{-\nu At}x\|_\alpha + \int_0^t \chi_R(\|u\|_\alpha)|\e^{-\nu A(t-s)}B(u,u)|_\alpha\de[s]\\
    &\leq \|x\|_\alpha + c\int_0^t (t-s)^{-\frac14(5-2\alpha)}\chi_R(\|u\|_\alpha)\|u\|_\alpha^2\de[s]\\
    &\leq \|x\|_\alpha + c R^2t^{\frac14(2\alpha-1)},
  \end{split}
\end{equation}
where we have used that
\begin{equation}\label{e:semiprop}
  \|A^\gamma\e^{-\nu At}\|_{\mathcal{L}(H)}\leq ct^{-\gamma}.
\end{equation}
Similarly, if $\beta>\alpha$, Corollary~\ref{c:Bnostro} ($a=\alpha$,
$b=\beta$) yields
\[
\begin{split}
  \|v(t)\|_\beta
  &\leq \|\e^{-\nu At}x\|_\beta + \int_0^t \chi_R(\|u\|_\alpha)|\e^{-\nu A(t-s)}B(u,u)|_\beta\de[s]\\
  &\leq c t^{-\frac12(\beta-\alpha)}\|x\|_\alpha + cR\int_0^t (t-s)^{-\frac14(5-2\alpha)}(\|v(s)\|_\beta+\|z(s)\|_\beta)\de[s]\\
  &\leq c t^{-\frac12(\beta-\alpha)}\|x\|_\alpha + c Rt^{\frac{2\alpha-1}{4}}\sup_{s\leq T}\|z(t)\|_\beta + cR\int_0^t (t-s)^{-\frac{5-2\alpha}{4}}\|v(s)\|_\beta\de[s].
\end{split}
\]
Choose $a_\beta(t)$ as in Lemma~\ref{l:mildbound} so that
\[
  c R a_\beta(t)\int_0^t (t-s)^{-\frac14(5-2\alpha)} a_\beta(s)^{-1}\de[s]\leq \frac12,
\]
hence
\begin{equation}\label{e:mildhigher}
  \sup_{t\leq T} a_\beta(t)\|v(t)\|_\beta
  \leq c\|x\|_\alpha + c\sup_{t\leq T}\|z(t)\|_\beta.
\end{equation}
Equicontinuity in time can be obtained by an estimate similar to~\eqref{e:mildalpha},
hence  there is a subsequence of $(v_n)_{n\in\N}$ converging uniformly in
$V_\alpha$ on any interval $[\epsilon,T]$. In particular, this implies that the
limit point is a solution to \eqref{e:strongRv} and it is continuous in
$V_\alpha$ on $(0,T]$. Continuity in $0$ can be obtained with an estimate
similar to \eqref{e:mildalpha}.
Finally, the bounds~\eqref{e:mildbound} can be obtained as in~\eqref{e:mildhigher}
and in turns they imply uniqueness, via Lemma~\ref{l:Runiq} below.

Finally, we prove the energy balance~\eqref{e:Rei}. The estimate~\eqref{e:mildbound} implies that
$Av\in L^2_\loc(0,\infty;V')$, while by Lemma~\ref{l:Bnostro} (with $a=\alpha$,
$b=1$ and $c=0$) we know that $\|\chi_R(\|u\|_\alpha) B(u,u)\|_{V'}\leq cR\|u\|_V$,
hence $\chi_R(\|u\|_\alpha) B(u,u)$ is in $L^2_\loc(0,\infty;V')$ and in conclusion
$\partial_t v\in L^2_\loc(0,\infty;V')$ and equality~\eqref{e:strongRv} holds
in $V'$. Equality~\eqref{e:strongRv} again follows easily from these two facts
and the properties of the nonlinearity.
\end{proof}
\begin{figure}[h]
  \centering
  \begin{tikzpicture}[x=28mm,y=35.2mm,line width=0.3mm]
    \fill [color=black!25] (0.5,0) -- (0.5,1) -- (3,1) --(1.5,0.25) -- (0.5,0);
    \fill [color=black!45] (0.5, 0) -- (1.5, 0.25) -- (1, 0);
    \draw [->] (0,0) -- (3,0) node [anchor=north] {\small $\alpha$};
    \draw [->] (0,0) -- (0,1) node [anchor=east] {\small $\alpha_0$};
    \draw (0,0.25) -- (-0.01,0.25) node [anchor=east] {\footnotesize $\tfrac14$};
    \draw (0, 0.5) -- (-0.01, 0.5) node [anchor=east] {\footnotesize $\tfrac12$};
    \draw (0.5, 0) -- (0.5, -0.01) node [anchor=north] {\footnotesize $1/2$};
    \draw (1, 0) -- (1, -0.01) node [anchor=north] {\footnotesize $1$};
    \draw (1.5, 0) -- (1.5, -0.01) node [anchor=north] {\footnotesize $3/2$};
    \draw [dashed,line width=0.2mm] (0.5, 0) -- (0.5, 1);
    \draw [dashed,line width=0.2mm] (1.5, 0) -- (1.5, 0.25);
    \draw [dashed,line width=0.2mm] (0.5, 0) -- (3,0.625) node [anchor=south] {\tiny $\alpha=\frac12+4\alpha_0$};
    \draw [dashed,line width=0.2mm] (1, 0) -- (3, 1) node [anchor=south] {\tiny $\alpha=1+2\alpha_0$};
  \end{tikzpicture}
\caption{The coloured area corresponds to all pairs of parameters $\alpha$,
  $\alpha_0$ where existence and uniqueness for~\eqref{e:strongR} holds.
  The light gray area is Lemma \ref{l:Rweak}, the dark gray area is
  Lemma~\ref{l:Rmild}.}
\end{figure}
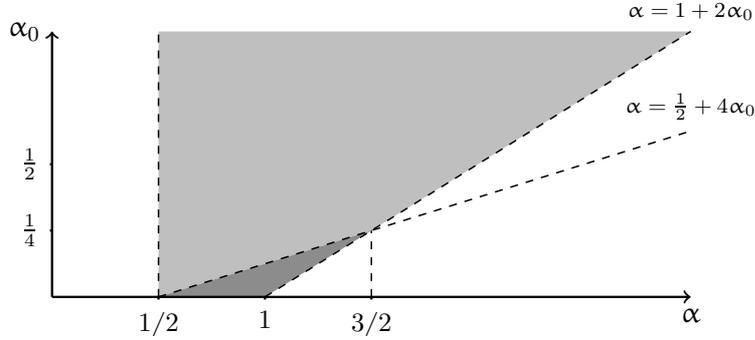
\begin{lemma}\label{l:Runiq}
Under the same assumptions of Theorem~\ref{t:Rexistuniq}, problem
\eqref{e:strongRv} has a unique solution $v\in C([0,\infty);V_\alpha)$.
\end{lemma}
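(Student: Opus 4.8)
The plan is to prove uniqueness by a singular Gronwall argument on the mild formulation, which covers the whole range $\tfrac12<\alpha<1+2\alpha_0$ at once. Let $v_1,v_2\in C([0,\infty);V_\alpha)$ be two solutions of~\eqref{e:strongRv} with the same initial datum $x$ and the same $z$, and set $u_i=v_i+z$ and $\delta=v_1-v_2=u_1-u_2$. Fix $T>0$; since each $u_i$ is continuous with values in $V_\alpha$, there is $M=M(R,T,x,z)$ with $\sup_{t\le T}\|u_i(t)\|_\alpha\le M$ for $i=1,2$. Subtracting the two equations and writing the nonlinear difference as
\[
\chi_R(\|u_1\|_\alpha)\bigl[B(\delta,u_1)+B(u_2,\delta)\bigr]
 + \bigl(\chi_R(\|u_1\|_\alpha)-\chi_R(\|u_2\|_\alpha)\bigr)B(u_2,u_2),
\]
one sees that $\delta(0)=0$ and, in mild form,
\[
\delta(t)=-\int_0^t\e^{-\nu A(t-s)}\Bigl(\chi_R(\|u_1\|_\alpha)\bigl[B(\delta,u_1)+B(u_2,\delta)\bigr]+\bigl(\chi_R(\|u_1\|_\alpha)-\chi_R(\|u_2\|_\alpha)\bigr)B(u_2,u_2)\Bigr)\de[s].
\]

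Next I would estimate $\|\delta(t)\|_\alpha$ term by term exactly as in~\eqref{e:mildalpha}. For the two terms carrying the factor $\chi_R(\|u_1\|_\alpha)$ I use Corollary~\ref{c:Bnostro} (with $a=b=\alpha$), the smoothing bound~\eqref{e:semiprop} and $\|u_1\|_\alpha\le 2R$ on the support of $\chi_R$, obtaining contributions of the form $c_R(t-s)^{-\frac14(5-2\alpha)}\|\delta(s)\|_\alpha$. The delicate term is the cut-off difference: here $B(u_2,u_2)$ is only controlled by the full norm $\|u_2\|_\alpha$, which need not be bounded by $2R$ when $\chi_R(\|u_2\|_\alpha)=0$ but $\chi_R(\|u_1\|_\alpha)\ne0$. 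This is where the a~priori bound $\|u_2\|_\alpha\le M$ enters: combining the Lipschitz property~\eqref{e:chiprop} of the cut-off, $|\chi_R(\|u_1\|_\alpha)-\chi_R(\|u_2\|_\alpha)|\le cR^{-1}\|\delta\|_\alpha$, with Corollary~\ref{c:Bnostro} gives a contribution $c_{R,M}(t-s)^{-\frac14(5-2\alpha)}\|\delta(s)\|_\alpha$ as well. Since $\alpha>\tfrac12$ forces $\tfrac14(5-2\alpha)<1$, summing up yields the Volterra inequality
\[
\|\delta(t)\|_\alpha\le c_{R,M,\nu}\int_0^t(t-s)^{-\frac14(5-2\alpha)}\|\delta(s)\|_\alpha\de[s],\qquad t\le T.
\]

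Finally I would close the argument with the weight-function device already used in Lemma~\ref{l:parabRreg}: choosing $a_\alpha$ as in Lemma~\ref{l:mildbound} (with $x=0$, $y=\tfrac14(5-2\alpha)$) so that $c_{R,M,\nu}\,a_\alpha(t)\int_0^t(t-s)^{-\frac14(5-2\alpha)}a_\alpha(s)^{-1}\de[s]\le\tfrac12$, the inequality above becomes $\sup_{t\le T}a_\alpha(t)\|\delta(t)\|_\alpha\le\tfrac12\sup_{t\le T}a_\alpha(t)\|\delta(t)\|_\alpha$, whence $\delta\equiv0$ on $[0,T]$; as $T$ is arbitrary, $v_1=v_2$ globally. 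The main obstacle, as indicated above, is the cut-off difference term, whose treatment forces one to use the global $V_\alpha$ bound on the two solutions rather than the naive bound $2R$ valid on the support of $\chi_R$. In the range of Lemma~\ref{l:Rweak}, where in addition $v_i\in L^2_\loc([0,\infty);V_{\alpha+1})$, one may alternatively test the equation for $\delta$ with $A^\alpha\delta$ and argue by the energy method, bounding the nonlinear term $\term{r}$ through Lemma~\ref{l:Bnostro} and Young's inequality and concluding by the classical Gronwall lemma.
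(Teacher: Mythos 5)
Your proof is correct, but it is not the paper's argument. The paper proves uniqueness by an energy method in a \emph{weaker} norm: for $\alpha\geq\tfrac34$ it estimates $w=v_1-v_2$ in $V_{\alpha-3/4}$ (inequality~\eqref{e:uniqgt32}), and for $\alpha<\tfrac34$ in $H$, in both cases combining Lemma~\ref{l:Bnostro} with interpolation and the refined cut-off inequality~\eqref{e:chiprop}; the whole point of the factor $(1+x)(1+y)$ in~\eqref{e:chiprop} is to absorb the unbounded product $\|u_1\|_\alpha\|u_2\|_\alpha$ in the cross term into $cR\|w\|_\alpha$, so no a~priori bound on the solutions enters there, and the conclusion comes from the classical Gronwall lemma (in the range $\alpha<\tfrac34$ at the price of invoking~\eqref{e:mildbound} to make $\|u_1\|_{3/2-\alpha}^{2/(1-\alpha)}$ integrable in time). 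You instead work directly in $V_\alpha$ with the mild formulation and a singular-kernel Volterra inequality closed by the weight function of Lemma~\ref{l:mildbound}; the price is that two of your terms, $\chi_R(\|u_1\|_\alpha)B(u_2,\delta)$ and the cut-off difference against $B(u_2,u_2)$, are not controlled by the cut-off and genuinely need the local bound $M=\sup_{t\leq T}\|u_i(t)\|_\alpha$ — which is legitimate, since uniqueness is asserted within $C([0,\infty);V_\alpha)$, but note that your ``$c_R$'' for the term $B(u_2,\delta)$ should likewise be $c_{R,M}$. What your route buys is uniformity: a single argument over the whole range $\tfrac12<\alpha<1+2\alpha_0$ using only the $V_\alpha$ regularity that the statement assumes, whereas the paper splits cases at $\alpha=\tfrac34$ and, below it, leans on the extra smoothing bound~\eqref{e:mildbound} for one of the two solutions; what the paper's route buys is constants depending only on $R$ (via~\eqref{e:chiprop}) and no singular kernels. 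Two small repairs to your write-up: the exponent $\tfrac14(5-2\alpha)$ is the correct smoothing rate only for $\alpha<\tfrac32$; for $\alpha>\tfrac32$ Corollary~\ref{c:Bnostro} gives $B(u,v)\in V_{\alpha-1}$ and the kernel is $(t-s)^{-1/2}$ (slightly worse at $\alpha=\tfrac32$), so you should state the kernel as $(t-s)^{-\kappa}$ with a suitable $\kappa<1$ in all cases; and you should add a word on why a weak solution in $C([0,\infty);V_\alpha)$ satisfies the variation-of-constants formula (standard, since Corollary~\ref{c:Bnostro} places $\chi_R(\|u\|_\alpha)B(u,u)$ in $C([0,T];V_\delta)$ for the appropriate $\delta$).
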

\begin{proof}
Let $v_1$ and $v_2$ be two solutions of \eqref{e:strongRv} starting at
the same point and set $u_1=v_1+z$, $u_2=v_2+z$ and $w=v_1-v_2$. The
new function $w$ solves the following equation with random coefficients,
\begin{multline*}
\dot w + \nu Aw
 =   \chi_R(\|u_1\|_\alpha) B(u_1, w)
   + \chi_R(\|u_2\|_\alpha) B(w, u_2) + {}\\
   + [\chi_R(\|u_2\|_\alpha) - \chi_R(\|u_1\|_\alpha)] B(u_1,u_2),
\end{multline*}
with $w(0)=0$.  First, it is elementary to verify that there is $c>0$
such that for $x,y\geq0$,
\begin{equation}\label{e:chiprop}
|\chi(x) - \chi(y)|(1+x)(1+y)\leq c|x-y|.
\end{equation}
If $\alpha\geq\tfrac34$, set $\beta=\alpha-\tfrac34$ and estimate $w$
in $V_\beta$. Lemma~\ref{l:Bnostro} (with $a=b=\alpha$ and $c=-\beta$),
the above inequality and interpolation of $V_\alpha$ between $V_\beta$
and $V_{\beta+1}$ yield
\begin{align}\label{e:uniqgt32}
  \frac{d}{dt}\|w\|_\beta^2 + 2\nu \|w\|_{\beta+1}^2
    & \leq  c|\chi_R(\|u_2\|_\alpha) - \chi_R(\|u_1\|_\alpha)| \|u_1\|_\alpha\|u_2\|_\alpha\|w\|_{1+\beta}\notag\\
    &\quad  + cR\|w\|_\alpha \|w\|_{1+\beta}\\
    &\leq \nu\|w\|_{\beta+1}^2 + c_R\|w\|_\beta^2.\notag
\end{align}
If on the other hand $\alpha<\tfrac34$, we estimate $w$ in $H$. Lemma
\ref{l:Bnostro} (with $a=\tfrac32-\alpha$, $b=\alpha$ and $c=0$) and
interpolation of $V_\alpha$ and $V_{3/2-\alpha}$ between $H$ and $V$ yield
\begin{align*}
  \frac{d}{dt}\|w\|_H^2 + 2\nu \|w\|_V^2
    & \leq  c|\chi_R(\|u_2\|_\alpha) - \chi_R(\|u_1\|_\alpha)| \|u_1\|_{\frac32-\alpha} \|u_2\|_\alpha\|w\|_V\\
    &\quad  + cR\|w\|_V \|w\|_{\frac32-\alpha}\\
    &\leq \nu\|w\|_V^2 + c_R\|w\|_H^2(1 + \|u_1\|_{\frac32-\alpha}^{\frac{2}{1-\alpha}}),
\end{align*}
where $\|u_1\|_{3/2-\alpha}^{\frac{2}{1-\alpha}}$ is integrable in time thanks
to~\eqref{e:mildbound} and the fact that $\alpha>\tfrac12$.
In both cases Gronwall's lemma implies that $w\equiv0$, since $w(0) = 0$.
\end{proof}
\subsection{An estimate of the \emph{blow-up} time}

We next study the distribution of the random time
$\tau_{\alpha,R}:\Ons\to[0,\infty)$, defined in~\eqref{e:blowuptime}.
We start with an estimate of the tails of the solution
$z$ to~\eqref{e:stokes}, whose proof is standard (see \cite{DapZab92} for
instance, a proof in the case $\beta=2$ is given in \cite{FlaRom07}).
\begin{lemma}\label{l:Ztails}
Assume {\nuno} from Assumption~\ref{a:mainass} and let $\beta<1+2\alpha_0$.
Then there are $a_0>0$ and $c_0>0$ (depending only on $\alpha_0$, $\beta$, and
$\nu$) such that for all $K\geq\frac12$ and $\epsilon>0$,
\[
  \Pb\bigl[\sup_{s\leq\epsilon}\|z(t)\|_\beta\geq K]
  \leq c_0\e^{-a_0\frac{K^2}\epsilon}.
\]
\end{lemma}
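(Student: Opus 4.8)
The plan is to exploit that $z$, being the solution of the linear equation~\eqref{e:stokes}, is a centered Gaussian process with values in $V_\beta$ (the hypothesis $\beta<1+2\alpha_0$ being exactly what guarantees $z(s)\in V_\beta$), so that
\[
  M := \sup_{s\le\epsilon}\|z(s)\|_\beta = \sup_{s\le\epsilon}\ \sup_{\|\phi\|_H\le1}\scal[H]{A^{\beta/2}z(s),\phi}
\]
is the supremum of a centered Gaussian family indexed by $(s,\phi)$. Everything then reduces to the Borell--Tsirelson--Ibragimov--Sudakov concentration inequality
\[
  \Pb[M\ge\E[M]+r]\le\e^{-r^2/(2\sigma^2)},\qquad r\ge0,
\]
provided I control, with the correct dependence on $\epsilon$, the variance parameter $\sigma^2=\sup_{s\le\epsilon}\sup_{\|\phi\|_H\le1}\E\scal[H]{A^{\beta/2}z(s),\phi}^2$ and the mean $\E[M]$. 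This is the factorization route of~\cite{DapZab92}, carried out for $\beta=2$ in~\cite{FlaRom07}.

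First I would bound the variance parameter. Writing $z(s)=\int_0^s\e^{-\nu A(s-r)}\cov^{\frac12}\de[W]$ and using the It\^o isometry,
\[
  \E\scal[H]{A^{\beta/2}z(s),\phi}^2 = \int_0^s\|\cov^{\frac12}\e^{-\nu A(s-r)}A^{\beta/2}\phi\|_H^2\de[r].
\]
Setting $\gamma=\tfrac\beta2-\tfrac34-\alpha_0$ and $\Theta=A^{3/4+\alpha_0}\cov^{\frac12}$ (bounded by {\nuno}, as is its adjoint $\cov^{\frac12}A^{3/4+\alpha_0}$), the functional calculus for $A$ gives $\cov^{\frac12}\e^{-\nu At}A^{\beta/2}=(\cov^{\frac12}A^{3/4+\alpha_0})\,A^{\gamma}\e^{-\nu At}$, whence $\|\cov^{\frac12}\e^{-\nu At}A^{\beta/2}\phi\|_H\le\|\Theta\|\,\|A^{\gamma}\e^{-\nu At}\|_{\mathcal{L}(H)}\|\phi\|_H$. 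Since $\beta<1+2\alpha_0$ forces $\gamma<-\tfrac14<0$, the operator norm $\|A^{\gamma}\e^{-\nu At}\|_{\mathcal{L}(H)}=\sup_k\lambda_k^{\gamma}\e^{-\nu\lambda_k t}\le\lambda_1^{\gamma}$ is bounded uniformly in $t$, so $\E\scal[H]{A^{\beta/2}z(s),\phi}^2\le Cs\|\phi\|_H^2$ and $\sigma^2\le C\epsilon$. This \emph{linear} dependence on $\epsilon$ of the largest eigenvalue of the covariance is precisely what produces the exponent $K^2/\epsilon$.

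Next I would estimate the mean. The same factorisation gives $\E\|z(s)\|_\beta^2=\int_0^s\|A^{\gamma}\e^{-\nu A(s-r)}\Theta\|_{\mathrm{HS}}^2\de[r]$, and from $\|A^{\gamma}\e^{-\nu At}\|_{\mathrm{HS}}^2=\sum_k\lambda_k^{2\gamma}\e^{-2\nu\lambda_k t}$ (which behaves like $t^{-2\gamma-3/2}$ when $\gamma>-\tfrac34$ and stays bounded when $\gamma<-\tfrac34$) one gets, after integrating in $r$, the pointwise bound $\E\|z(s)\|_\beta^2\le Cs^{\theta_0}$ with $\theta_0=\min\{1,1+2\alpha_0-\beta\}\in(0,1]$. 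To upgrade this to the supremum I would use the factorisation method proper, writing $z$ as a fractional convolution of an auxiliary Gaussian process $Y_\theta$ and combining the smoothing of $\e^{-\nu At}$ with Young's inequality in time; this yields $\E[M^2]\le C\epsilon^{\theta_0}$ with the same power of $\epsilon$, so $\E[M]\le C\epsilon^{\theta_0/2}$ with $\theta_0/2\le\tfrac12$. Passing from the pointwise variance to this sup-moment bound is the main technical point, since the pointwise estimate does not by itself control the supremum over $[0,\epsilon]$.

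Finally I would assemble the two pieces uniformly in $K\ge\tfrac12$ and $\epsilon>0$ by splitting on the size of $\epsilon$. If $\epsilon\le(K/(2C))^{2/\theta_0}$ then $\E[M]\le K/2$, so $r=K-\E[M]\ge K/2$ and Borell's inequality gives $\Pb[M\ge K]\le\exp(-K^2/(8\sigma^2))\le\exp(-cK^2/\epsilon)$. In the complementary range I would use only the trivial bound $\Pb[M\ge K]\le1$: there $K^2/\epsilon\le K^{2-2/\theta_0}(2C)^{2/\theta_0}$, and because $\theta_0\le1$ the exponent $2-2/\theta_0$ is nonpositive, so the constraint $K\ge\tfrac12$ keeps $K^2/\epsilon$ below a constant $C_3$; choosing $c_0=\e^{a_0C_3}$ then yields $1\le c_0\e^{-a_0K^2/\epsilon}$. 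Taking the larger of the constants from the two cases produces the required $a_0$ and $c_0$. The delicate interplay is exactly that $\theta_0\le1$ (the mean grows no faster than $\epsilon^{1/2}$), which together with the lower bound $K\ge\tfrac12$ is what makes the single exponent $K^2/\epsilon$ valid for \emph{all} $\epsilon>0$.
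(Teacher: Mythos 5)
Your proof is correct and is essentially the argument the paper itself relies on: the paper gives no proof of Lemma~\ref{l:Ztails}, deferring to the standard Gaussian estimates of \cite{DapZab92} (and \cite{FlaRom07} for $\beta=2$), which is precisely your route of Borell's inequality plus the Da~Prato--Zabczyk factorisation, with the variance parameter $\sigma^2\le C\epsilon$ extracted from {\nuno} and the mean controlled by $C\epsilon^{\theta_0/2}$. The only microscopic caveat is the borderline case $\beta=2\alpha_0$ (i.e.\ $\gamma=-\tfrac34$), where the Hilbert--Schmidt norm acquires a logarithmic correction so the clean power $s^{\theta_0}$ fails by a log; this is harmless, since your final case-splitting only requires \emph{some} exponent in $(0,1]$ and one may simply replace $\theta_0$ by $\theta_0-\delta$.
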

\begin{proposition}\label{p:blowup}
Assume {\nuno} from Assumption~\ref{a:mainass} and let $\alpha\in(\tfrac12,1+2\alpha_0)$,
with $\alpha\neq\tfrac32$. There exists $c'=c'(\alpha)>0$ such that if $R\geq1$,
$x\in V_\alpha$ with $\|x\|_\alpha\leq\tfrac{R}{3}$ and if
$T\leq c'R^{-4/((2\alpha-1)\wedge2)}$ then
\[
  \Bigl\{\sup_{[0,T]}|z(t)|_\alpha\leq\frac{R}{3}\Bigr\}
  \subset \bigl\{\tau_x^\besp{\alpha,R}\geq T\bigr\},
\]
where $z$ is the solution to~\eqref{e:stokes}. In particular,
\[
  \Pb_x^\besp{\alpha,R}[\tau_x^\besp{\alpha,R}\leq T]
  \leq c_0\e^{-a_0\frac{R^2}{9T}}.
\]

If $\alpha=\tfrac32$, then for every $\epsilon<1$ such that $\alpha+\epsilon<1+2\alpha_0$
there is $c_\epsilon>0$ such that the same holds true on the event
$\{\sup_{[0,T]}|z(t)|_{\alpha+\epsilon}\leq R/3\}$ for
$T\leq c_\epsilon R^{-2/(1-\epsilon)}$.
\end{proposition}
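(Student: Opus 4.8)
The plan is to establish a deterministic implication: as long as the Ornstein--Uhlenbeck process $z$ stays below $R/3$ in the $V_\alpha$ norm on $[0,T]$, the solution $u_x^\besp{\alpha,R}$ cannot reach norm $R$, so that the blow-up time $\tau_x^\besp{\alpha,R}$ must exceed $T$. The probabilistic tail bound then follows immediately by applying Lemma~\ref{l:Ztails} (with $\beta=\alpha$, $K=R/3\geq\tfrac12$) to the complementary event. The core of the work is therefore a purely analytic a-priori estimate on $\|u\|_\alpha = \|v+z\|_\alpha$.

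First I would write $u=v+z$ and exploit the decomposition of Theorem~\ref{t:Rexistuniq}, so that it suffices to control $\|v\|_\alpha$ while $\sup_{[0,T]}\|z\|_\alpha\leq R/3$. For $\tfrac12<\alpha<\tfrac32$ I would use the mild-solution representation already derived in~\eqref{e:mildalpha}: integrating the semigroup against the nonlinearity and invoking Corollary~\ref{c:Bnostro} (with $a=b=\alpha$) together with the smoothing property~\eqref{e:semiprop} gives, on the time interval where $\chi_R$ is active (so $\|u\|_\alpha\leq 2R$),
\[
  \|v(t)\|_\alpha
    \leq \|x\|_\alpha + c R^2 t^{\frac14(2\alpha-1)}.
\]
Since $\|x\|_\alpha\leq R/3$, choosing $T\leq c' R^{-4/(2\alpha-1)}$ forces $c R^2 T^{(2\alpha-1)/4}$ to be at most, say, $R/3$, so that $\|u(t)\|_\alpha\leq\|v(t)\|_\alpha+\|z(t)\|_\alpha\leq R/3+R/3+R/3=R$ on $[0,T]$; hence $\tau_x^\besp{\alpha,R}\geq T$. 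For $\alpha>\tfrac32$ I would instead run the differential energy estimate~\eqref{e:gt32} from Lemma~\ref{l:Rweak}, which bounds $\tfrac{d}{dt}\|v\|_\alpha^2$ by $cR^4$ while the cut-off is on; integrating gives $\|v(t)\|_\alpha^2\leq \|x\|_\alpha^2 + cR^4 t$, and choosing $T\leq c'R^{-2}$ (matching the exponent $4/((2\alpha-1)\wedge 2)=2$ in this regime) keeps $\|v\|_\alpha$ small enough to close the same $R/3+R/3+R/3$ bookkeeping.

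The main obstacle, and the reason for the separate clause in the statement, is the borderline case $\alpha=\tfrac32$, where neither the strictly-subcritical mild estimate nor the strictly-supercritical energy estimate applies cleanly. Here I would mimic~\eqref{e:equal32}: fix $\epsilon<1$ with $\alpha+\epsilon<1+2\alpha_0$, apply Lemma~\ref{l:Bnostro} with the shifted exponents and interpolate $V_{\alpha+\epsilon}$ between $V_\alpha$ and $V_{\alpha+1}$, which replaces the time scaling by one governed by $\epsilon$ and forces the threshold $T\leq c_\epsilon R^{-2/(1-\epsilon)}$; the price is that one must now control $z$ in the slightly stronger norm $V_{\alpha+\epsilon}$, which is exactly why the statement phrases this case on the event $\{\sup_{[0,T]}\|z\|_{\alpha+\epsilon}\leq R/3\}$. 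In all cases the final probabilistic assertion is obtained by taking complements and applying Lemma~\ref{l:Ztails}, yielding the bound $c_0\e^{-a_0 R^2/(9T)}$.
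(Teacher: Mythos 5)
Your proposal is correct and coincides essentially step for step with the paper's own proof: the same deterministic inclusion reduced to an a priori bound on $v=u-z$, the same three-case analysis (the mild estimate \eqref{e:mildalpha} for $\tfrac12<\alpha<\tfrac32$, the energy estimate \eqref{e:gt32} for $\alpha>\tfrac32$, and the $\epsilon$-shifted estimate in the spirit of \eqref{e:equal32} for $\alpha=\tfrac32$, with $z$ controlled in $V_{\alpha+\epsilon}$), and the same concluding application of Lemma~\ref{l:Ztails}. No gaps to report.
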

\begin{proof}
Fix $x\in V_\alpha$ with $|x|_\alpha\leq\tfrac{R}{3}$, let $z$ be the solution
to~\eqref{e:stokes} and set $v^\besp{\alpha,R} = u_x^\besp{\alpha,R} - z$.
Assume first $\alpha>\tfrac32$. If $\sup_{[0,T]}|z(t)|_\alpha\leq\tfrac{R}{3}$,
inequality~\eqref{e:gt32} implies that
$\|v^\besp{\alpha,R}(t)\|_\alpha^2\leq\tfrac19 R^2+cR^4T$ for $t\leq T$, hence
\[
  \|u_x^\besp{\alpha,R}(t)\|
   \leq \|z(t)\|_\alpha + \|v^\besp{\alpha,R}(t)\|_\alpha
   \leq \frac{R}{3} + R\sqrt{\tfrac19 + cR^2T}
   \leq R
\]
if $T\leq c'R^{-2}$, for a suitable $c'$.
If on the other hand $\alpha<\tfrac32$, inequality \eqref{e:mildalpha} (which
holds for the full range $\alpha\in(\tfrac12,\tfrac32)$) yields
$\|v^\besp{\alpha,R}\|_\alpha\leq \tfrac13 R + cR^2T^{\frac14(2\alpha-1)}$,
hence $\|u_x^\besp{\alpha,R}(t)\|_\alpha\leq R$ for $t\leq T$, if
$T\leq c'R^{-4/(2\alpha-1)}$ and $\sup_{[0,T]}|z(t)|_\alpha\leq\tfrac{R}{3}$.

Finally, if $\alpha=\tfrac32$, we choose $\epsilon>0$ as we had done for \eqref{e:equal32}
so that $\|v(t)\|_\alpha\leq c_\epsilon R^{(2-\epsilon)/(1-\epsilon)}\sqrt{T}$
for $t\leq T$ and hence $\|u_x^\besp{\alpha,R}(t)\|_\alpha\leq R$ for $t\leq T$
if $T\leq c_\epsilon'R^{-2/(1-\epsilon)}$ and
$\sup_{[0,T]}|z(t)|_{\alpha+\epsilon}\leq\tfrac{R}{3}$.
\end{proof}
\subsection{Inequalities}

\begin{lemma}\label{l:mildbound}
Given $x,y\in[0,1)$ and $\delta>0$, $\eta>0$, let
\[
a(t) =
\begin{cases}
t^x, 				&\qquad 0\leq t\leq\delta,\\
\delta^x\e^{-\eta(t-\delta)},	&\qquad t>\delta.
\end{cases}
\]
Then $a$ is continuous on $[0,\infty)$, $|a(t)|\leq\delta^x$ and for all $t\geq0$,
\[
a(t)\int_0^t(t-s)^{-y}a(s)^{-1}\de[s]
\leq B(1-x,1-y)\delta^{1-y} + \eta^{y-1}\Gamma(1-y),
\]
where $B$ and $\Gamma$ are, respectively, the Beta and the Gamma functions.
\end{lemma}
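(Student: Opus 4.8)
The statement is elementary, and the plan is to dispose of the two easy claims first and then to estimate the weighted convolution
\[
I(t) = a(t)\int_0^t (t-s)^{-y}a(s)^{-1}\de[s]
\]
by splitting according to whether $t\le\delta$ or $t>\delta$. The guiding observation is that on $[0,\delta]$ one has $a(s)^{-1}=s^{-x}$, which turns the integral into a Beta integral after rescaling, whereas on $(\delta,\infty)$ one has $a(s)^{-1}=\delta^{-x}\e^{\eta(s-\delta)}$, and this exponential factor pairs with the prefactor $\e^{-\eta(t-\delta)}$ coming from $a(t)$ to produce a Gamma integral. This is precisely why the two terms $B(1-x,1-y)\delta^{1-y}$ and $\eta^{y-1}\Gamma(1-y)$ appear in the bound.

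The continuity and the bound $|a(t)|\le\delta^x$ are immediate: the two branches agree at $t=\delta$ (both equal $\delta^x$) and are individually continuous; and since $x\ge0$ the map $t\mapsto t^x$ is non-decreasing on $[0,\delta]$, while $\e^{-\eta(t-\delta)}\le1$ handles $t>\delta$. For the case $t\le\delta$, here $a(s)=s^x$ throughout $[0,t]$, so I would use the scaling substitution $s=tu$ to write $\int_0^t(t-s)^{-y}s^{-x}\de[s]=t^{1-x-y}B(1-x,1-y)$; multiplying by $a(t)=t^x$ gives $I(t)=t^{1-y}B(1-x,1-y)\le\delta^{1-y}B(1-x,1-y)$, which already lies below the asserted bound since the Gamma term is non-negative.

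For the main case $t>\delta$, I would split the integral at $s=\delta$. On $[0,\delta]$ the integrand is $(t-s)^{-y}s^{-x}$, and since $t>\delta>s$ with $y\ge0$ one has $(t-s)^{-y}\le(\delta-s)^{-y}$; this reduces the piece to the same Beta integral $\delta^{1-x-y}B(1-x,1-y)$, and after multiplying by $a(t)=\delta^x\e^{-\eta(t-\delta)}\le\delta^x$ it is bounded by $\delta^{1-y}B(1-x,1-y)$. On $[\delta,t]$ the $\delta^{\pm x}$ factors cancel against the prefactor, and the substitution $r=t-s$ yields
\[
\e^{-\eta(t-\delta)}\int_\delta^t(t-s)^{-y}\e^{\eta(s-\delta)}\de[s]=\int_0^{t-\delta} r^{-y}\e^{-\eta r}\de[r]\le\int_0^\infty r^{-y}\e^{-\eta r}\de[r]=\eta^{y-1}\Gamma(1-y),
\]
the last equality by the rescaling $u=\eta r$. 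Adding the two pieces gives exactly the claimed bound.

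I expect no real obstacle here. The single point that requires a moment's care is the monotonicity estimate $(t-s)^{-y}\le(\delta-s)^{-y}$ used in the first sub-integral of the case $t>\delta$: it is what decouples the Beta and Gamma contributions, allowing the two summands in the final bound to be estimated independently and then simply added. I would also remark in passing that all integrals converge precisely because $x<1$ and $y<1$, so that $1-x>0$ and $1-y>0$, which is what makes both the Beta and the Gamma functions finite.
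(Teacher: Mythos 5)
Your proof is correct and follows essentially the same route as the paper: the Beta integral by scaling for $t\leq\delta$, and for $t>\delta$ the split at $s=\delta$ with the exponential factors recombining into $\e^{-\eta(t-s)}$ and the Gamma integral after the substitution $r=t-s$. Your pointwise bound $(t-s)^{-y}\leq(\delta-s)^{-y}$ together with $\e^{-\eta(t-\delta)}\leq1$ is just the explicit form of the paper's observation that the first term is non-increasing in $t\geq\delta$, so the two arguments coincide in substance.
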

\begin{proof}
Denote by $A(t)$ the function in the statement of the lemma. If $t\leq\delta$,
by a change of variables,
\[
A(t)
 =   t^x\int_0^t(t-s)^{-y}s^{-x}\de[s]
 =   t^{1-y}B(1-x,1-y)
\leq \delta^{1-y}B(1-x,1-y),
\]
while if $t>\delta$,
\[
\begin{aligned}
A(t)
& = \delta^x\e^{-\eta(t-\delta)}\int_0^\delta(t-s)^{-y}s^{-x}\de[s]
    + \int_\delta^t(t-s)^{-y}\e^{-\eta(t-s)}\de[s]\\
&\leq \delta^{1-y}B(1-x,1-y)
    + \eta^{y-1}\Gamma(1-y),
\end{aligned}
\]
where the first term is non-increasing in $t\geq\delta$ and we have used a
change of variables in the second term.
\end{proof}
Finally, we prove a slight generalisation of \cite[Lemma D.2]{FlaRom08} (a range
of parameters is covered by \cite[Lemma 2.1]{Tem95} or \cite[Proposition 6.4]{ConFoi88}).
First we need the following two elementary estimates.
\begin{lemma}\label{l:convserie1}
Let $\alpha\in\R$, then there is a number $c=c(\alpha)$ such that
for all $k_0\geq1$,
  \[
  \sum_{\vk\in\Z^3:\ 0<|\vk|\leq k_0}|\vk|^\alpha\leq
    \begin{cases}
      ck_0^{(\alpha+3)\vee 0}&\qquad\alpha\neq -3,\\
      c\log(1+k_0)   &\qquad\alpha = -3.
    \end{cases}
  \]
\end{lemma}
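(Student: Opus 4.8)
The plan is to reduce the three-dimensional lattice sum to a one-dimensional sum over integer radii by a standard shell decomposition, and then treat the resulting sum by comparison with an integral, splitting into cases according to the sign of $\alpha+3$.

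First I would group the lattice points according to the integer part of $|\vk|$. For each integer $j\geq1$ set $S_j = \{\vk\in\Z^3 : j-1<|\vk|\leq j\}$, so that the sum over $0<|\vk|\leq k_0$ decomposes into sums over $S_1,\dots,S_{\lceil k_0\rceil}$. The key geometric input is the bound $\#S_j\leq cj^2$, valid for a dimensional constant $c$ and all $j\geq1$. This follows by attaching to each lattice point the corresponding unit cube: these cubes are pairwise disjoint, each has unit volume, and the cubes attached to points of $S_j$ all lie inside the spherical shell $\{j-1-\sqrt3\leq|x|\leq j+\sqrt3\}$, whose volume is $O(j^2)$. (Equivalently, one may invoke the Gauss counting estimate $\#\{|\vk|\leq r\}=\tfrac43\pi r^3 + O(r^2)$.)

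Next I would estimate $|\vk|^\alpha$ on each shell. For $\vk\in S_j$ with $j\geq2$ one has $|\vk|\asymp j$, hence $|\vk|^\alpha\leq c_\alpha j^\alpha$, the constant depending only on $\alpha$ and on its sign (use $|\vk|\leq j$ when $\alpha\geq0$, and $|\vk|>j-1\geq j/2$ when $\alpha<0$). Combining with $\#S_j\leq cj^2$, and handling the single shell $S_1$ (which contributes a bounded constant) separately, I obtain
\[
  \sum_{\vk\in\Z^3:\,0<|\vk|\leq k_0}|\vk|^\alpha
    \leq c + c\sum_{j=2}^{\lceil k_0\rceil} j^{\alpha+2}.
\]
It then remains to bound the one-dimensional sum $\sum_{j=2}^{\lceil k_0\rceil} j^{\alpha+2}$, which I would do by comparison with $\int_1^{k_0+1} r^{\alpha+2}\de[r]$. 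In the case $\alpha>-3$ one has $\alpha+2>-1$ and the integral is $\asymp k_0^{\alpha+3}$, giving $ck_0^{(\alpha+3)\vee0}$; in the case $\alpha<-3$ one has $\alpha+2<-1$, so the series $\sum_j j^{\alpha+2}$ converges and the sum is $O(1)=ck_0^{(\alpha+3)\vee0}$ since then $(\alpha+3)\vee0=0$; and in the case $\alpha=-3$ one has $\alpha+2=-1$ and $\sum_{j=2}^{\lceil k_0\rceil}j^{-1}\leq c\log(1+k_0)$. These three cases reproduce exactly the right-hand side of the claimed inequality.

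The argument is entirely routine one-variable calculus once the reduction is made; the only point genuinely using the three-dimensional geometry, and hence the only place requiring any care, is the shell-counting bound $\#S_j\leq cj^2$.
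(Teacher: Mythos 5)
Your proof is correct, and in fact the paper itself gives no proof of this lemma: it is introduced as one of two ``elementary estimates'' and stated without proof (the proof environment that follows it belongs to Lemma~\ref{l:convserie2}). So there is no argument in the paper to compare against; yours supplies the routine justification the author left implicit. The two points that genuinely require care are both handled correctly in your write-up: the shell count $\#S_j\leq cj^2$ (via disjoint unit cubes contained in a shell of volume $O(j^2)$, or equivalently Gauss's counting estimate), and the separate treatment of the first shell $S_1$, which is what makes the bound $|\vk|^\alpha\leq c_\alpha j^\alpha$ on $S_j$, $j\geq2$, legitimate also when $\alpha<0$. The concluding one-dimensional comparison correctly distinguishes the regimes $\alpha+2>-1$, $\alpha+2=-1$ and $\alpha+2<-1$, and the leftover additive constants are absorbed using $k_0\geq1$, so all three cases of the statement are recovered.
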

\begin{lemma}\label{l:convserie2}
Let $\alpha,\beta,\gamma\in\R$ be such that $2(\alpha+\beta+\gamma)\geq3$ if
$\beta<\tfrac32$, $\alpha+\gamma>0$ if $\beta=\tfrac32$ and $\alpha+\gamma\geq0$
if $\beta>\tfrac32$. Then there is a number $c=c(\alpha,\beta,\gamma)$ such that
for every $\vl\in\Z^3$, with $|\vl|>1$,
\[
  \sum_{\vm:\ |\vl+\vm|>2|\vm|}\frac1{|\vl|^{2\alpha}|\vm|^{2\beta}|\vl+\vm|^{2\gamma}}\leq c.
\]
\end{lemma}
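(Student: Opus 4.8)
The plan is to exploit the geometric constraint $|\vl+\vm|>2|\vm|$ to show that, on the index set of the sum, the factor $|\vl+\vm|$ is comparable to $|\vl|$, so that the three--frequency sum collapses to a one--frequency sum controlled by Lemma~\ref{l:convserie1}. Throughout I write $L=|\vl|$ and understand $\vm\neq0$ (so that $|\vm|\geq1$), as dictated by the mean--zero setting in which these sums arise.

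First I would record two elementary consequences of $|\vl+\vm|>2|\vm|$. By the triangle inequality $L=|(\vl+\vm)-\vm|\geq|\vl+\vm|-|\vm|>2|\vm|-|\vm|=|\vm|$, so $|\vm|<L$; in particular the index set is contained in $\{0<|\vm|<L\}$. For a lower bound on $|\vl+\vm|$, combining the constraint with $|\vl+\vm|\geq L-|\vm|$ gives $|\vl+\vm|\geq\max(2|\vm|,\,L-|\vm|)\geq\tfrac23 L$, the worst case being $|\vm|=\tfrac13 L$ where the two bounds cross. Together with $|\vl+\vm|\leq L+|\vm|<2L$ this yields $\tfrac23 L\leq|\vl+\vm|\leq 2L$, hence $|\vl+\vm|^{-2\gamma}\leq C(\gamma)L^{-2\gamma}$ for either sign of $\gamma$.

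With this reduction the summand is bounded by $C(\gamma)\,L^{-2(\alpha+\gamma)}|\vm|^{-2\beta}$, and enlarging the index set to $\{0<|\vm|<L\}$ (all terms being nonnegative) gives
\[
\sum_{\vm:\,|\vl+\vm|>2|\vm|}\frac1{|\vl|^{2\alpha}|\vm|^{2\beta}|\vl+\vm|^{2\gamma}}
\leq \frac{C(\gamma)}{L^{2(\alpha+\gamma)}}\sum_{0<|\vm|<L}|\vm|^{-2\beta}.
\]
Now I would invoke Lemma~\ref{l:convserie1} with exponent $-2\beta$ and split into three regimes. If $\beta<\tfrac32$ the inner sum is $\leq cL^{3-2\beta}$, so the right--hand side is $\leq c\,L^{3-2(\alpha+\beta+\gamma)}$, bounded for $L>1$ precisely because $2(\alpha+\beta+\gamma)\geq3$. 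If $\beta>\tfrac32$ the inner sum converges to a finite constant $S_\beta=\sum_{\vm\neq0}|\vm|^{-2\beta}$, and the bound $C(\gamma)S_\beta\,L^{-2(\alpha+\gamma)}$ is uniform in $L\geq1$ since $\alpha+\gamma\geq0$. Finally if $\beta=\tfrac32$ the inner sum is $\leq c\log(1+L)$, and $\log(1+L)/L^{2(\alpha+\gamma)}$ is bounded on $[1,\infty)$ because $\alpha+\gamma>0$. In each case the bound is independent of $\vl$, which is the claim.

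The only genuinely load--bearing step is the comparability of $|\vl+\vm|$ and $|\vl|$ on the constraint set; once that decoupling is in hand, the remainder is bookkeeping, and the three hypotheses on $(\alpha,\beta,\gamma)$ are exactly what is needed to kill the three possible growth rates ($L^{3-2(\alpha+\beta+\gamma)}$, a constant, and $\log(1+L)/L^{2(\alpha+\gamma)}$) produced by Lemma~\ref{l:convserie1}. I do not anticipate a serious obstacle; the points to handle with care are the exclusion of $\vm=0$ and the verification of the constant in $|\vl+\vm|^{-2\gamma}\leq C(\gamma)L^{-2\gamma}$ when $\gamma<0$.
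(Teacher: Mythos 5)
Your proof is correct and follows essentially the same route as the paper: the paper likewise shows that the constraint $|\vl+\vm|>2|\vm|$ forces $|\vm|<|\vl|$ and $\tfrac23|\vl|\leq|\vl+\vm|\leq2|\vl|$ (via the identical case split at $|\vm|=\tfrac13|\vl|$), and then invokes Lemma~\ref{l:convserie1}. The only difference is that the paper compresses the final step into ``the conclusion now follows using the previous lemma,'' whereas you spell out the three regimes $\beta<\tfrac32$, $\beta=\tfrac32$, $\beta>\tfrac32$ and the sign issue for $\gamma$, which is exactly the bookkeeping the paper leaves implicit.
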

\begin{proof}
First, notice that $\{\vm:|\vl+\vm|>2|\vm|\}\subset\{\vm:|\vm|<|\vl|\}$ and
so $|\vl+\vm|\leq2|\vl|$. We prove that $\frac23|\vl|\leq|\vl+\vm|$ holds
as well. If $|\vm|\leq\frac13|\vl|$, then
$|\vl+\vm|\geq |\vl|-|\vm| \geq\frac23|\vl|$. If on the other hand
$|\vm|\geq\frac13|\vl|$, then $|\vl+\vm|>2|\vm|\geq\frac23|\vl|$.
The conclusion now follows using the previous lemma.
\end{proof}
\begin{lemma}\label{l:Bnostro}
Let $a,b,c\in\R$ be such that $a\geq(-c)\vee0$, $b\geq(-c)\vee0$ and
$2(a + b + c)\geq3$ (with a strict inequality if at least one of the three
numbers is equal to $3/2$). Then there is a number $c_B=c_B(a,b,c)$ such that
\[
  \scal{B(u,v), w}\leq c_B \|u\|_a \|v\|_b \|w\|_{c+1}.
\]
for all $u\in V_a$, $v\in V_b$ and $w\in V_{c+1}$.
\end{lemma}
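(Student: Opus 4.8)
The plan is to bound the trilinear form using Fourier analysis on the torus, reducing the estimate to the convergent series provided by Lemma~\ref{l:convserie2}. First I would expand $u$, $v$, $w$ in Fourier series adapted to $H$, writing $u=\sum_{\vk}\hat u_{\vk}\e^{\im\vk\cdot x}$ (and similarly for $v$, $w$), with the understanding that each $\hat u_{\vk}$ is a vector orthogonal to $\vk$ (divergence-free) and that $A$ acts as multiplication by $|\vk|^2$, so that $\|u\|_a^2 \approx \sum_{\vk}|\vk|^{2a}|\hat u_{\vk}|^2$. The nonlinear term $B(u,v)=P((u\cdot\nabla)v)$, where $P$ is the Leray projection, has Fourier coefficients supported on the convolution structure $\vk=\vl+\vm$: schematically $\widehat{B(u,v)}_{\vk}$ is a sum over $\vl+\vm=\vk$ of terms bounded by $|\vk|\,|\hat u_{\vl}|\,|\hat v_{\vm}|$ (the single derivative contributes the factor $|\vk|$, and the projection $P$ is bounded). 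Pairing against $w$ gives
\[
  |\scal{B(u,v),w}|
    \leq c\sum_{\vl+\vm+\vn=0}|\vl+\vm|\,|\hat u_{\vl}|\,|\hat v_{\vm}|\,|\hat w_{\vn}|.
\]

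Next I would insert the weights by writing $|\hat u_{\vl}| = |\vl|^{-a}(|\vl|^a|\hat u_{\vl}|)$ and similarly for the other two factors with exponents $b$ and $c+1$, so that the ``good'' parts $|\vl|^a|\hat u_{\vl}|$, $|\vm|^b|\hat v_{\vm}|$, $|\vn|^{c+1}|\hat w_{\vn}|$ will eventually be summed via Cauchy--Schwarz to reproduce $\|u\|_a\|v\|_b\|w\|_{c+1}$. The leftover kernel is then, after using $\vn=-(\vl+\vm)$ so that $|\vn|=|\vl+\vm|$ and absorbing the derivative factor $|\vl+\vm|^{1}$ against $|\vn|^{-(c+1)}=|\vl+\vm|^{-(c+1)}$,
\[
  K(\vl,\vm) = \frac{1}{|\vl|^{a}\,|\vm|^{b}\,|\vl+\vm|^{c}}.
\]
The hypotheses $a\geq(-c)\vee 0$ and $b\geq(-c)\vee 0$ are exactly what guarantee the weight exponents are arranged so that no negative power is misallocated (in particular when $c<0$ one must check the powers of $|\vl+\vm|$ stay nonnegative where needed), and $2(a+b+c)\geq 3$ is the summability threshold. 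By symmetry of the sum under permuting the roles of the three frequencies, it is enough to control the contribution from the region where one frequency dominates, say $|\vl+\vm|>2|\vm|$, which is precisely the region treated in Lemma~\ref{l:convserie2}.

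The main technical obstacle—and the heart of the argument—is the Cauchy--Schwarz/H\"older step that decouples the kernel from the three $\ell^2$ sequences while producing a \emph{bounded} residual sum. Concretely, after splitting the full frequency sum into the finitely many symmetric regions (according to which of $|\vl|$, $|\vm|$, $|\vl+\vm|$ is comparable to the maximum), in each region I would apply Cauchy--Schwarz in two of the three summation variables, leaving a single sum of the form $\sum_{\vm}K(\vl,\vm)^2$ (for fixed $\vl$) that must be uniformly bounded in $\vl$; this is exactly the statement of Lemma~\ref{l:convserie2}, whose hypotheses on $(\alpha,\beta,\gamma)$ match $(a,b,c)$ and whose proof reduces the region to $\tfrac23|\vl|\leq|\vl+\vm|\leq 2|\vl|$ and then invokes the elementary counting bound of Lemma~\ref{l:convserie1}. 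The borderline cases where one of $a,b,c$ equals $\tfrac32$ require the strict inequality in $2(a+b+c)\geq 3$ (or the special casework $\beta=\tfrac32$ in Lemma~\ref{l:convserie2}) to avoid the logarithmically divergent endpoint $\alpha=-3$ in Lemma~\ref{l:convserie1}; handling these endpoints cleanly is the part I expect to demand the most care.
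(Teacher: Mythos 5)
Your skeleton (Fourier expansion, inserting the weights, splitting according to which frequency is small, Cauchy--Schwarz decoupling, then the series lemmas) is the same as the paper's, but there is a genuine gap at the step you yourself call the heart of the argument: the claim that ``by symmetry'' every region reduces to Lemma~\ref{l:convserie2}. The trilinear sum has no such symmetry --- the exponents $a,b,c$ sit on specific factors, and the hypotheses of Lemma~\ref{l:convserie2} single out the exponent $\beta$ of the \emph{summed} frequency --- and, more seriously, the region in which $|\vl+\vm|$ (the frequency carried by $w$) is the smallest of the three is not the region of Lemma~\ref{l:convserie2} under any relabelling. In that lemma the constraint $|\vl+\vm|>2|\vm|$ makes the summed frequency small \emph{relative to the dependent frequency} $\vl+\vm$; in the region at issue the dependent frequency is itself the small one, so for fixed $\vl$ the kernel sum runs over $\{\vm:\ |\vm|\geq\tfrac12|\vl+\vm|,\ |\vl|\geq\tfrac12|\vl+\vm|\}$, which contains all $\vm$ close to $-\vl$, where $|\vl+\vm|^{-2c}$ is large (for $c>0$); Lemma~\ref{l:convserie2} explicitly excludes exactly this set. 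So for this region (the paper's term \term{A}) your plan as written does not go through.

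The paper treats \term{A} by a different, more elementary argument, and this is the piece missing from your proposal: on that region $|\vl|\leq3|\vm|$ and $|\vm|\leq3|\vl|$, so $|\vm|^{-2b}\leq c|\vl|^{-2b}$ (using $b\geq0$), and after Young's and Cauchy--Schwarz' inequalities the term is bounded by $\|v\|_b^2\sum_{\vl}|\vl|^{-2(a+b)}U_\vl^2\sum_{|\vk|\leq2|\vl|}|\vk|^{-2c}$ plus a symmetric term (here $U_\vl=|\vl|^a|u_\vl|$); the inner sum is controlled by the counting Lemma~\ref{l:convserie1}, and boundedness requires precisely $2(a+b)\geq(3-2c)\vee0$, with strictness at the logarithmic endpoint $c=\tfrac32$ --- Lemma~\ref{l:convserie2} never enters for this term. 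If you prefer to stay closer to your scheme, you could instead split \term{A} into the part where $|\vl|>2|\vl+\vm|$, which after substituting $\vk=\vl+\vm$ does fall under Lemma~\ref{l:convserie2} with $(\alpha,\beta,\gamma)=(b,c,a)$, and the diagonal part where all three frequencies are comparable, which needs a direct volume count of order $|\vl|^{3-2(a+b+c)}$. Finally, even in the two regions where your reduction is correct, ``symmetry'' must be replaced by an explicit check of the asymmetric hypotheses of Lemma~\ref{l:convserie2} for the assignments $(\alpha,\beta,\gamma)=(a,b,c)$ and $(b,a,c)$: for instance, when $a=\tfrac32$ one needs $b+c>0$, which comes from the strict-inequality convention, and when $a>\tfrac32$ one needs $b+c\geq0$, which comes from $b\geq(-c)\vee0$. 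These verifications are where the lemma's hypotheses are actually consumed, and they cannot be waved away by a symmetry that the sum does not possess.
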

\begin{proof}
We proceed as in the proof of \cite[Lemma D.2]{FlaRom08}. In terms of Fourier
series $u(x)=\sum u_\vk \e^{\im\vk\cdot x}$ and $v(x)=\sum v_\vk \e^{\im\vk\cdot x}$,
hence
\[
  B(u,v) = \im\sum_{\vk\neq0}\Bigl(\sum_{\vl+\vm=\vk}(\vk\cdot u_\vl)P_\vk v_\vm\Bigr)\e^{\im\vk\cdot x},
\]
where $P_\vk:\R^3\to\R^3$ is the projection onto $\{y\in\R^3: y\cdot\vk=0\}$.
Therefore,
\[
\begin{aligned}
  \scal{B(u,v),w}
    & =   \Im\Bigl(\sum_{\vk\neq0}\overline{w}_\vk\Bigl(\sum_{\vl+\vm=\vk}(\vk\cdot u_\vl)P_\vk v_\vm\Bigr)\Bigr)\\
    &\leq \|w\|_{c+1}\Bigl(\sum_{\vk\neq0}|\vk|^{-2c}\Bigl|\sum_{\vl+\vm=\vk}|u_\vl|\,|v_\vm|\Bigr|^2\Bigr)^\frac12
\end{aligned}
\]
Divide the sum of the right-hand side of the above formula in the three terms
\term{A}, \term{B} and \term{C}, corresponding to the inner sum extended
respectively to 
  \begin{gather*}
   A_\vk=\{\vl+\vm=\vk,\ |\vl|\geq\frac{|\vk|}2, |\vm|\geq\frac{|\vk|}2\},\\
   B_\vk=\{\vl+\vm=\vk,\ |\vm|<\frac{|\vk|}2\},
   \qquad
   C_\vk=\{\vl+\vm=\vk,\ |\vl|<\frac{|\vk|}2\}.
   \end{gather*}
Set, for brevity, $U_\vk = |\vk|^a|u_\vk|$ and $V_\vk = |\vk|^a|v_\vk|$.
We start with the estimate of \term{A}. Since by Young's and Cauchy--Schwartz'
inequalities,
\[
  \term{A}^2
  \leq  2\|v\|_b^2\sum_{\vk\neq0}|\vk|^{-2c}\Bigl(\sum_{\vl+\vm=\vk}\!|\vl|^{-2(a+b)}U_\vl^2\Bigr)
      + 2\|u\|_a^2\sum_{\vk\neq0}|\vk|^{-2c}\Bigl(\sum_{\vl+\vm=\vk}\!|\vm|^{-2(a+b)}V_\vm^2\Bigr),
\]
by exchanging the sums in $\vk$ and $\vl$ and using Lemma~\ref{l:convserie1}
(we only consider the first term, one can proceed similarly for the second),
\[
  \sum_{\vk\neq0}|\vk|^{-2c}\sum_{\vl+\vm=\vk}|\vl|^{-2(a+b)}U_\vl^2
  =    \sum_{\vl\neq0}|\vl|^{-2(a+b)}U_\vl^2\sum_{|\vk|\leq 2|\vl|}|\vk|^{-2c}
  \leq c\|u\|_a^2,
\]
and so $\term{A}\leq c\|u\|_a\|v\|_b$. We estimate \term{B} using Cauchy--Schwartz'
inequality, exchanging the sums and using Lemma~\ref{l:convserie2},
\[
  \begin{aligned}
    \term{B}^2
    &\leq \|v\|_b^2\sum_{\vk\neq0}|\vk|^{-2c}\sum_{B_\vk}|\vl|^{-2a}|\vm|^{-2b} U_\vl^2\\
    & =    \|v\|_b^2\sum_{\vl\neq0}|\vl|^{-2a} U_\vl^2 \sum_{\vm:|\vl+\vm|>2|\vm|}|\vl+\vm|^{-2c}|\vm|^{-2b}\\
    &\leq c\|u\|_a^2\|v\|_b^2.
  \end{aligned}
\]
Finally, the term \term{C} can be obtained from \term{B} by exchanging $u$ with $v$ and
$\vl$ with $\vm$.
\end{proof}
\begin{corollary}\label{c:Bnostro}
If $a$, $b\geq0$, then there is $c_B>0$ such that for all $u\in V_a$
and $v\in V_b$,
\begin{equation*}
  \|A^{\frac{\delta}{2}}B(u,v)\|_H\leq c_B \|u\|_a \|v\|_b,
\end{equation*}
where $\delta=(a\wedge b - (\tfrac32-a\vee b)_+ - 1)$ if $a\vee b\neq\tfrac32$,
and $\delta<(a\wedge b-1)$ if $a\vee b=\tfrac32$ or $a\vee b=0$.
\end{corollary}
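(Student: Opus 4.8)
The plan is to obtain the corollary from Lemma~\ref{l:Bnostro} by a duality argument, after reducing to smooth vector fields. I would first prove the inequality for $u,v\in\Test$, where $B(u,v)$ is smooth, so that $A^{\delta/2}B(u,v)$ and all the pairings below are unambiguous, and then extend it to every $u\in V_a$, $v\in V_b$ by density and the continuity of $B$. Writing
\[
  \|A^{\frac\delta2}B(u,v)\|_H
    = \sup_{\|g\|_H\le1}\scal[H]{A^{\frac\delta2}B(u,v),g}
\]
and using the self-adjointness of $A^{\delta/2}$ to move it onto the test vector, I set $w=A^{\delta/2}g$; then $\scal[H]{A^{\frac\delta2}B(u,v),g}=\scal{B(u,v),w}$ and $\|w\|_{-\delta}=\|A^{-\frac\delta2}w\|_H=\|g\|_H\le1$. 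As $g$ ranges over (a dense subset of) the unit ball of $H=V_0$, the vector $w$ ranges over the unit ball of $V_{-\delta}$, so it suffices to bound $\scal{B(u,v),w}$ by $c_B\|u\|_a\|v\|_b\|w\|_{-\delta}$.

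This is precisely the estimate furnished by Lemma~\ref{l:Bnostro} with its third index $c$ chosen so that $c+1=-\delta$, i.e. $c=-\delta-1$. I would then translate the three hypotheses of that lemma into constraints on $\delta$. Since $a,b\ge0$, the requirements $a\ge(-c)\vee0$ and $b\ge(-c)\vee0$ reduce to $a\ge-c=\delta+1$ and $b\ge-c=\delta+1$, that is $\delta\le a\wedge b-1$; while $2(a+b+c)\ge3$ reads $c\ge\tfrac32-a-b$, i.e. $\delta\le a+b-\tfrac52$. Hence every $\delta\le\min\{a\wedge b-1,\ a+b-\tfrac52\}$ is admissible, and a one-line computation (the two terms of the minimum cross over exactly at $a\vee b=\tfrac32$) gives
\[
  \min\bigl\{a\wedge b-1,\ a+b-\tfrac52\bigr\}
    = a\wedge b-\bigl(\tfrac32-a\vee b\bigr)_+-1,
\]
which is the value in the statement.

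It remains to account for the strict-inequality proviso of Lemma~\ref{l:Bnostro}, which is in force as soon as one of $a$, $b$, $c$ equals $\tfrac32$. If $a\vee b=\tfrac32$, one of the first two indices equals $\tfrac32$, so the lemma may be invoked only under $2(a+b+c)>3$; this prevents $\delta$ from attaining the critical value and yields the bound for every $\delta$ strictly below it, which is the first exceptional clause. The optimal choice $c=-\delta-1$ itself equals $\tfrac32$ precisely when $a=b=0$, that is $a\vee b=0$, and there the same proviso again forces the strict inequality, giving the second exceptional clause.

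I expect the only genuinely delicate point to be this bookkeeping of the strict versus non-strict boundary in the two degenerate configurations; everything else is the mechanical duality identity $\scal[H]{A^{\frac\delta2}B(u,v),g}=\scal{B(u,v),A^{\frac\delta2}g}$ together with the optimisation over the single free parameter $c$ in Lemma~\ref{l:Bnostro}.
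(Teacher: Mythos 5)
Your reduction is the intended one: the corollary is stated in the paper without proof precisely because it follows from Lemma~\ref{l:Bnostro} by the duality you describe, and your translation of the lemma's hypotheses into $\delta\le a\wedge b-1$ and $\delta\le a+b-\tfrac52$, together with the crossover identity $\min\{a\wedge b-1,\,a+b-\tfrac52\}=a\wedge b-(\tfrac32-a\vee b)_+-1$, is exactly right, as is your treatment of the exceptional case $a\vee b=\tfrac32$.

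The gap is in your last step, the case $a\vee b=0$. There the strict proviso of Lemma~\ref{l:Bnostro} (triggered because $c=-\delta-1=\tfrac32$ at the critical value) forces $2(a+b+c)>3$, i.e.\ $\delta<a+b-\tfrac52=-\tfrac52$; it does \emph{not} yield the printed clause $\delta<a\wedge b-1=-1$. The two thresholds coincide only when $a\vee b=\tfrac32$, which is exactly why the identification works in the first exceptional case but fails in the second. So your argument proves the estimate for $a=b=0$ only for $\delta<-\tfrac52$, whereas the statement as printed asks for all $\delta<-1$; and no argument can close this gap, because the bound genuinely fails for $\delta\in[-\tfrac52,-1)$: for $u,v\in H$ one only has $u\otimes v\in L^1\subset V_{-\tfrac32-\epsilon}$, hence $B(u,v)\in V_{-\tfrac52-\epsilon}$, and a standard concentration (scaling) example shows $B:H\times H\to V_{-2}$ is unbounded. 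In other words, the clause for $a\vee b=0$ should be read (or corrected) as ``$\delta$ strictly below the generic value $a\wedge b-(\tfrac32-a\vee b)_+-1$''; your derivation actually establishes this corrected statement, but by asserting that it ``gives the second exceptional clause'' you have papered over the discrepancy rather than proved it. A smaller omission: when $a\wedge b=\tfrac32<a\vee b$ the proviso of the lemma is also in force, and you should note it is harmless there, since at the critical $\delta$ one has $2(a+b+c)=2(a\vee b)>3$ automatically.
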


\end{document}